\newcommand{\Mdef}[2]{\newcommand{#1}{\relax \ifmmode #2 \else $#2$\fi}}
\newcommand{\sm }{\wedge}
\newcommand{\tensor}{\otimes}
\newcommand{\sdr}{\rtimes}
\newcommand{\Hom}{\mathrm{Hom}}
\newcommand{\Ext}{\mathrm{Ext}}
\Mdef{\bhom}{\mathbf{\hat{H}om}}
\Mdef{\Mod}{\mathrm{mod}}
\newcommand{\st}{\; | \;}
\newcommand{\hash}{\#}
\newtheorem{thm}{Theorem}[section]
\newtheorem{lemma}[thm]{Lemma}
\newtheorem{prop}[thm]{Proposition}
\newtheorem{cor}[thm]{Corollary}
\theoremstyle{definition}
\newtheorem{defn}[thm]{Definition}
\newtheorem{notation}[thm]{Notation}
\newtheorem{example}[thm]{Example}
\newtheorem{remark}[thm]{Remark}
\newcommand{\qqed}{\qed \\[1ex]}
\renewenvironment{proof}[1][\hspace*{-.8ex}]{\noindent {\bf Proof #1:\;}}{\qqed}
\Mdef{\PH} {\Phi^H}
\Mdef{\PK} {\Phi^K}
\Mdef{\PL} {\Phi^L}
\Mdef{\PT} {\Phi^{\T}}
\Mdef{\ef}{E{\cF}_+}
\Mdef{\etf}{\widetilde{E}{\cF}}
\Mdef{\eg}{E{G}_+}
\Mdef{\etg}{\tilde{E}{G}}
\newcommand{\etp}{\tilde{E}\cP}
\newcommand{\piA}{\pi^{\cA}}
\Mdef{\infl}{\mathrm{inf}}
\Mdef{\defl}{\mathrm{def}}
\Mdef{\res}{\mathrm{res}}
\Mdef{\ind}{\mathrm{ind}}
\Mdef{\coind}{\mathrm{coind}}
\Mdef{\univ}{\mathcal{U}}
\Mdef{\Fp}{\mathbb{F}_p}
\Mdef{\Zpinfty}{\Z /p^{\infty}}
\Mdef{\Zpadic}{\Z_p^{\wedge}}
\newcommand{\dichotomy}[2]{\left\{ \begin{array}{ll}#1\\#2 \end{array}\right.}
\newcommand{\adjunction}[4]{
\diagram
#1:#2 \rrto<0.7ex> &&
#3  \llto<0.7ex> :#4 
\enddiagram}
\newcommand{\lra}{\longrightarrow}
\newcommand{\lla}{\longleftarrow}
\newcommand{\lr}[1]{\langle #1 \rangle}
\newcommand{\Gspectra}{\mbox{$G$-{\bf spectra}}}
\Mdef{\we}{\mathbf{we}}
\Mdef{\fib}{\mathbf{fib}}
\Mdef{\cof}{\mathbf{cof}}
\Mdef{\BI}{\mathcal{BI}}
\newcommand{\colim}{\mathop{  \mathop{\mathrm {lim}} \limits_\rightarrow} \nolimits}
\Mdef{\B}{\mathbb{B}}
\Mdef{\C}{\mathbb{C}}
\Mdef{\D}{\mathbb{D}}
\Mdef{\E}{\mathbb{E}}
\Mdef{\T}{\mathbb{T}}
\Mdef{\F}{\mathbb{F}}
\Mdef{\G}{\mathbb{G}}
\Mdef{\I}{\mathbb{I}}
\Mdef{\N}{\mathbb{N}}
\Mdef{\Q}{\mathbb{Q}}
\Mdef{\R}{\mathbb{R}}
\Mdef{\bbS}{\mathbb{S}}
\Mdef{\Z}{\mathbb{Z}}
\Mdef{\bA}{\mathbb{A}}
\Mdef{\bB}{\mathbb{B}}
\Mdef{\bC}{\mathbb{C}}
\Mdef{\bD}{\mathbb{D}}
\Mdef{\bE}{\mathbb{E}}
\Mdef{\bF}{\mathbb{F}}
\Mdef{\bG}{\mathbb{G}}
\Mdef{\bH}{\mathbb{H}}
\Mdef{\bI}{\mathbb{I}}
\Mdef{\bJ}{\mathbb{J}}
\Mdef{\bK}{\mathbb{K}}
\Mdef{\bL}{\mathbb{L}}
\Mdef{\bM}{\mathbb{M}}
\Mdef{\bN}{\mathbb{N}}
\Mdef{\bO}{\mathbb{O}}
\Mdef{\bP}{\mathbb{P}}
\Mdef{\bQ}{\mathbb{Q}}
\Mdef{\bR}{\mathbb{R}}
\Mdef{\bS}{\mathbb{S}}
\Mdef{\bT}{\mathbb{T}}
\Mdef{\bU}{\mathbb{U}}
\Mdef{\bV}{\mathbb{V}}
\Mdef{\bW}{\mathbb{W}}
\Mdef{\bX}{\mathbb{X}}
\Mdef{\bY}{\mathbb{Y}}
\Mdef{\bZ}{\mathbb{Z}}
\Mdef{\cA}{\mathcal{A}}
\Mdef{\cB}{\mathcal{B}}
\Mdef{\cC}{\mathcal{C}}
\Mdef{\mcD}{\mathcal{D}} 
\Mdef{\cE}{\mathcal{E}}
\Mdef{\cF}{\mathcal{F}}
\Mdef{\cG}{\mathcal{G}}
\Mdef{\mcH}{\mathcal{H}} 
\Mdef{\cI}{\mathcal{I}}
\Mdef{\cJ}{\mathcal{J}}
\Mdef{\cK}{\mathcal{K}}
\Mdef{\mcL}{\mathcal{L}}
\Mdef{\cM}{\mathcal{M}}
\Mdef{\cN}{\mathcal{N}}
\Mdef{\cO}{\mathcal{O}}
\Mdef{\cP}{\mathcal{P}}
\Mdef{\cQ}{\mathcal{Q}}
\Mdef{\mcR}{\mathcal{R}}
\Mdef{\cS}{\mathcal{S}}
\Mdef{\cT}{\mathcal{T}}
\Mdef{\cU}{\mathcal{U}}
\Mdef{\cV}{\mathcal{V}}
\Mdef{\cW}{\mathcal{W}}
\Mdef{\cX}{\mathcal{X}}
\Mdef{\cY}{\mathcal{Y}}
\Mdef{\cZ}{\mathcal{Z}}
\Mdef{\ca}{\mathcal{a}}
\Mdef{\ct}{\mathcal{t}}
\Mdef{\At}{\tilde{A}}
\Mdef{\Bt}{\tilde{B}}
\Mdef{\Ct}{\tilde{C}}
\Mdef{\Et}{\tilde{E}}
\Mdef{\Ht}{\tilde{H}}
\Mdef{\Kt}{\tilde{K}}
\Mdef{\Lt}{\tilde{L}}
\Mdef{\Mt}{\tilde{M}}
\Mdef{\Nt}{\tilde{N}}
\Mdef{\Pt}{\tilde{P}}
\Mdef{\tA}{\tilde{A}}
\Mdef{\tB}{\tilde{B}}
\Mdef{\tC}{\tilde{C}}
\Mdef{\tE}{\tilde{E}}
\Mdef{\tH}{\tilde{H}}
\Mdef{\tK}{\tilde{K}}
\Mdef{\tL}{\tilde{L}}
\Mdef{\tM}{\tilde{M}}
\Mdef{\tN}{\tilde{N}}
\Mdef{\tP}{\tilde{P}}
\Mdef{\ft}{\tilde{f}}
\Mdef{\xt}{\tilde{x}}
\Mdef{\yt}{\tilde{y}}
\Mdef{\Ab}{\overline{A}}
\Mdef{\Bb}{\overline{B}}
\Mdef{\Cb}{\overline{C}}
\Mdef{\Db}{\overline{D}}
\Mdef{\Eb}{\overline{E}}
\Mdef{\Fb}{\overline{F}}
\Mdef{\Gb}{\overline{G}}
\Mdef{\Hb}{\overline{H}}
\Mdef{\Ib}{\overline{I}}
\Mdef{\Jb}{\overline{J}}
\Mdef{\Kb}{\overline{K}}
\Mdef{\Lb}{\overline{L}}
\Mdef{\Mb}{\overline{M}}
\Mdef{\Nb}{\overline{N}}
\Mdef{\Ob}{\overline{O}}
\Mdef{\Pb}{\overline{P}}
\Mdef{\Qb}{\overline{Q}}
\Mdef{\Rb}{\overline{R}}
\Mdef{\Sb}{\overline{S}}
\Mdef{\Tb}{\overline{T}}
\Mdef{\Ub}{\overline{U}}
\Mdef{\Vb}{\overline{V}}
\Mdef{\Wb}{\overline{W}}
\Mdef{\Xb}{\overline{X}}
\Mdef{\Yb}{\overline{Y}}
\Mdef{\Zb}{\overline{Z}}
\Mdef{\db}{\overline{d}}
\Mdef{\hb}{\overline{h}}
\Mdef{\qb}{\overline{q}}
\Mdef{\rb}{\overline{r}}
\Mdef{\tb}{\overline{t}}
\Mdef{\ub}{\overline{u}}
\Mdef{\vb}{\overline{v}}
\Mdef{\hc}{\hat{c}}
\Mdef{\he}{\hat{e}}
\Mdef{\hf}{\hat{f}}
\Mdef{\hA}{\hat{A}}
\Mdef{\hH}{\hat{H}}
\Mdef{\hJ}{\hat{J}}
\Mdef{\hM}{\hat{M}}
\Mdef{\hP}{\hat{P}}
\Mdef{\hQ}{\hat{Q}}
\Mdef{\thetab}{\overline{\theta}}
\Mdef{\phib}{\overline{\phi}}
\Mdef{\uA}{\underline{A}}
\Mdef{\uB}{\underline{B}}
\Mdef{\uC}{\underline{C}}
\Mdef{\uD}{\underline{D}}
\Mdef{\bolda}{\mathbf{a}}
\Mdef{\boldb}{\mathbf{b}}
\Mdef{\bfD}{\mathbf{D}}
\Mdef{\fm}{\frak{m}}
\Mdef{\fp}{\frak{p}}
\newcommand{\fX}{\mathfrak{X}}
\Mdef{\eps}{\epsilon}
\newcommand{\up}{\mathsf{V}}
\newcommand{\cell}{\mathrm{Cell}}
\newcommand{\sub}{\mathrm{Sub}}
\newcommand{\PP}{\mathbb{P}}
\newcommand{\cEi}{\cE^{-1}}
\newcommand{\cIi}{\cI^{-1}}
\newcommand{\Hbar}{\overline{H}}
\newcommand{\bbT}{\mathbb{T}}
\newcommand{\Zt}{\tilde{\Z}}
\newcommand{\Qt}{\tilde{\Q}}
\newcommand{\gb}{\overline{g}}
\newcommand{\cotoral}{\leq_{ct}}
\newcommand{\fa}{\mathfrak{a}}
\newcommand{\Wbar}{\overline{W}}
\newcommand{\End}{\mathrm{End}}
\renewcommand{\tb}{\overline{\times}}
\newcommand{\nt}{\mathbf{nt}}
\newcommand{\full}{\mathrm{full}}
\newcommand{\cNbar}{\overline{\cN}}
\newcommand{\Ah}{\hat{A}}
\newcommand{\wt}{\tilde{w}}
\newcommand{\Th}{\mathrm{Th}}
\newcommand{\diag}{\mathrm{diag}}
\newcommand{\spl}{\mathrm{Split}}
\newcommand{\gammaperp}{\gamma^{\perp}}
\newcommand{\width}{\mathrm{width}}
\newcommand{\cOcF}{\mathcal{O}_{\cF}}
\newcommand{\cOcFt}{\widetilde{\mathcal{O}}_{\cF}}
\newcommand{\Vt}{\tilde{V}}
\newcommand{\e}{(e)}
\newcommand{\Qmod}{\Q\mbox{-mod}}
\newcommand{\cVGfull}{\cV^G_{\full}}
\newcommand{\Rt}{\tilde{R}}
    \newcommand{\cospan}{\lrcorner}
    \newcommand{\module}{\mbox{-mod}}
\newcommand{\cFa}{\cF_a}
\newcommand{\cFb}{\cF_b}
\newcommand{\cOcFa}{\cO_{\cFa}}
\newcommand{\cOcFb}{\cO_{\cFb}}
\newcommand{\cKa}{\mathcal{K}_a}
\newcommand{\cOcKa}{\cO_{\cKa}}
\newcommand{\cOcK}{\cO_{\cK}}
\newcommand{\siftyV}[1]{S^{\infty V( #1)}}
\newcommand{\supV}[1]{S^{\up ( #1)}}
 \newcommand{\efp}{E\cF_+}
      \newcommand{\efap}{E\cF^a_+}
      \newcommand{\efbp}{E\cF^b_+}
      \newcommand{\cEhi}{\hat{\cE}^{-1}}
      \newcommand{\Sh}{\hat{S}}
\newcommand{\freeGspectra}{\mbox{free-$G$-spectra}}
\newcommand{\cofreeGspectra}{\mbox{cofree-$G$-spectra}}
\newcommand{\HBGmodules}{\mbox{$H^*(BG)$-mod}}
\newcommand{\torsionHBGmodules}{\mbox{tors-$H^*(BG)$-mod}}
\newcommand{\completeHBGmodules}{\mbox{comp-$H^*(BG)$-mod}}
\newcommand{\moduleGspectra}{\mbox{-mod-$G$-spectra}}
\newcommand{\modulespectra}{\mbox{-mod-spectra}}
\newcommand{\modules}{\mbox{-mod}}
\newcommand{\spectra}{\mathrm{spectra}}
\newcommand{\cSi}{\cS^{-1}}
\newcommand{\ebar}{\overline{e}}
\newcommand{\Tt}{\tilde{T}}
\newcommand{\cOcFZ}{\cO_{\cF/Z}}
\newcommand{\cOcFTt}{\cO_{\cF/\Tt}}
\newcommand{\piGs}{\pi^G_*}
\newcommand{\mcRt}{\tilde{\mcR}}
\newcommand{\piG}{\pi^G}
\begin{document}
\title{Rational $G$-spectra for rank 2 toral groups of mixed type}

\author{J.P.C.Greenlees}
\address{Mathematics Institute, Zeeman Building, Coventry CV4, 7AL, UK}
\email{john.greenlees@warwick.ac.uk}

\date{}
\begin{abstract}
  In this paper we give an explicit and calculable
  algebraic model for the block of
  rational $G$-spectra on full subgroups 
  when $G$ has identity component a 2-torus $\T$, and
component group of order 2 acting non-trivially on $H_1(\T)$. The
example of particular interest is the normalizer of the maximal torus
in $U(2)$, which constitutes one of the most complicated blocks in the analysis
of $SU(3)$. This builds on the determination of subgroups 
up to conjugacy in \cite{t2wqalg}. 
\end{abstract}

\thanks{The author is grateful for comments, discussions  and related
  collaborations with S.Balchin, D.Barnes, T.Barthel, M.Kedziorek,
  L.Pol, J.Williamson. The work is partially supported by EPSRC Grant
  EP/W036320/1. The author  would also  like to thank the Isaac Newton
  Institute for Mathematical Sciences, Cambridge, for support and
  hospitality during the programme Equivariant Homotopy Theory in
  Context, where later parts of  work on this paper was undertaken. This work was supported by EPSRC grant EP/Z000580/1.  } 

\maketitle

\tableofcontents

\section{The shape of abelian models}

\subsection{Context}
For  a compact Lie group $G$, it has been conjectured \cite{AGconj} that the category of 
rational $G$-spectra is Quillen equivalent to differential graded 
objects of an abelian category $\cA (G)$. In this paper we  make 
$\cA (G)$ explicit for some 2-dimensional groups $G$, which necessarily
have identity component a torus.

Toral groups
$$1\lra \T \lra G\stackrel{\pi}\lra W\lra 1 $$
are classified by the $\Z W$-module $\Lambda_0=H_1(\T)$ and an extension 
class. The rational representation $\Lambda_0^{\Q}=H_1(\T;\Q)$ describes the
general shape. We are concerned with the special case that $\T$ is a
2-torus and $W$ is of order 2, so there are three possibilities:
$\Q^2, \Q\oplus\Qt $ and $\Qt^2$. The first case is a   central 
extensions of a torus, and the last case has no fixed circles; these
are more straightforward and covered elsewhere \cite{AGnoeth,
  gqwf}.

The present paper deals with the case
$\Lambda_0^{\Q}=\Q\oplus \Qt$. In this case we have either
$\Lambda_0=\Z \oplus \Zt$ or $\Lambda_0=\Z W$. Since $H^2(W; \Z\oplus
\Zt)$ is of order 2, there are two groups of the first type (namely
$G\cong O(2)\times T$ or $G\cong Pin(2)\times T$), and since $H^2(W;\Z W)=0$
the second case is automatically split, with $G\cong T^2\sdr
C_2$ and $C_2$ exchanging the factors.
The present paper proves the conjecture in  these three mixed cases of rank 2. 

\subsection{The shape of the model}
In general the abelian category $\cA (G)$ is a space of sheaves over
the space $\fX_G=\sub(G)/G$ of conjugacy classes of (closed) subgroups
of $G$. The space occurs as the Balmer spectrum of finite rational
$G$-spectra \cite{spcgq}, so it has two relevant topologies:
the h-topology (the quotient of the Hausdorff metric topology on $\sub
(G)$) and the Zariski topology. The
closed sets in the Zariski topology are the sets that are h-closed and
closed under passage to cotoral subgroups \footnote{$K$ is cotoral in
  $H$ if $K$ is normal in $H$ with quotient a torus. A conjugacy class $(K)$
  is cotoral in a conjugacy class $(H)$ if the relation holds for some
  representative subgroups.}

We note that every subgroup $H$ of $G$ is either contained in the
identity component $\T$, or else it is full in the sense that $\pi
H=W$. It is easy to see this gives a partition of $\fX_G$ into Zariski
clopen subsets: 
$$\fX_G =\cV^G_1\amalg \cV^G_W$$
where 
$$\cV^G_1=\sub (\T)/W \mbox{ and } \cV^G=\{ (H)\st H \mbox{ is full
}\}.  $$
This means 
$$\cA (G)=\cA (G|\fX_G)=\cA(G|\cV^G_1)\times
\cA(G|\cV^G_W). $$
Since the toral part $\cV^G_1$ is  covered in \cite{AGtoral, gtoralq}, we
focus on the full component $\cV^G_W$. 

 The spaces $\cV^G_W$ for the three groups $G$  are different in detail but 
their general structures are the same. In all three cases the dimension comes half from limits of subgroups  
with finite Weyl groups and half from a cotoral inclusion. 
In all three  cases $\fX_G$ is  approximately (but not exactly) the product of two one 
dimensional pictures.

 \subsection{Associated work in preparation}
This paper is the third in a series  of 5 constructing an algebraic
category $\cA (SU(3))$ and showing it gives an algebraic model for 
rational $SU(3)$-spectra. This series gives a concrete 
illustrations of general results in small and accessible 
examples.

The first paper \cite{t2wqalg} describes the group theoretic data that feeds into the construction of an
abelian category $\cA (G)$ for all toral groups $G$ and makes them
explicit for toral subgroups of rank 2 connected groups. In
particular, this determines the space $\fX_G$ in all three of the
cases considered here.

The second paper (which is not used here) constructs algebraic models for all relevant 1-dimensional
blocks. 

The paper \cite{u2q} assembles the  information from the first three
papers and that
from \cite{gtoralq} to give an abelian category $\cA (U(2))$ in 7
blocks and shows it is an algebraic model for rational
$U(2)$-spectra. Finally, the paper \cite{su3q} constructs $\cA (SU(3))$
in 18 blocks and shows it is equivalent to the category of rational
$SU(3)$-spectra. The most complicated parts of the model for $G=U(2)$
and $G=SU(3)$ are the toral blocks, which are based on the work in the
present paper.

This series is part of a more general programme. Future installments
will consider blocks with Noetherian Balmer spectra \cite{AGnoeth} and
those with no cotoral inclusions \cite{gqwf}. 
An account of the general nature of the models is in preparation
\cite{AVmodel}, and the author hopes that this will be the basis of the proof that the
category of rational $G$-spectra has an algebraic model in general.

\subsection{Contents}
We begin in Section \ref{sec:abmixed} by describing the algebraic
model $\cA (G|\full)$. The brief Section \ref{sec:strategy}
summarises the strategy for showing that $DG-\cA(G|\full)$ is
equivalent to rational $G$-spectra over full subgroups. Section  
\ref{sec:isotropicrings} constructs the appropriate structured and
isotropically formal  ring spectra used in the proof, and Section
\ref{sec:decompS} shows that they correspond to algebraic adelic
constructions and  the sphere can be reconstructed from
them. Section  \ref{sec:genform} discusses generators for the category. Section 
\ref{sec:piA} gives an equivalence with an algebraic module category
and Section \ref{sec:CST} finds an economical cellular skeleton of
the module category in which homology isomorphisms are the only
weak equivalences and establishes it gives a model.

\section{Abelian models for mixed components}
\label{sec:abmixed}

We identified the space of subgroups and all relevant structure in
Sections 9 and 11 
of \cite{t2wqalg}, and we will use the same notation here. 

In this section we describe the standard model. We will describe the
separated and complete models elsewhere. 
 In Sections \ref{sec:isotropicrings} to 
\ref{sec:piA}  we show that  the model gives an algebraic model
of  $G$-spectra over full subgroups. The final piece of algebra,
showing that the standard model is a cellular skeleton of the
appropriate module category, is deferred to Section \ref{sec:CST}, so that we can motivate it using by cells from topology.


\subsection{The groups}
We are dealing with three isomorphism classes of groups: two with toral
lattice $\Lambda_0=\Z\oplus \Zt$ (one split and one non-split) and one with
toral lattice $\Lambda_0=\Z W$. The spaces $\cV=\cV^G_{\full}$ of full subgroups share a common form thanks to the fact
that the rational toral lattice is $\Q \oplus \Qt$ in all cases.

The Lie algebra $L\T$ is the direct sum of the $+1$  and $-1$
eigenspaces, and  we will write $Z$ (for centre) and $\Tt$ for their
image circles under
the exponential map. For the toral lattice $\Z\oplus\Zt$ we have
$\T=Z\times \Tt$; indeed, $G\cong Z\times O(2)$ in the split case and
$G\cong Z\times Pin(2)$ in the non-split case. For the toral lattice $\Z W$ the two circles
intersect in their subgroup of order 2; indeed we may take $G$ to be the
normalizer of diagonal matrices in $U(2)$ and the tori intersect in
$\{\pm I\}$.

\subsection{The subgroups}
First we recall the general form of $\cV^G_\full$ from Sections 9 and 11 of \cite{t2wqalg}.
The general picture is that for each $1\leq m, n\leq \infty$ there are
finitely many subgroups $H^{\lambda}(m,n)$ (one, two or three
subgroups depending on $G, m$ and $n$). In fact if $m=\infty$ there is exactly
one subgroup $H(\infty, n)$ for each $n$ and if  $n=\infty$  there are
precisely two groups $H^\lambda (m, \infty)$ for each $m<\infty$, 
indexed by $\lambda \in \{s, ns\}$ (for `split' and `non-split'). If $m$ and $n$ are both finite,
then there are either 2 or 3 subgroups $H^\lambda (m,n)$. In all cases there
are two subgroups we call Type 1, one of them split and one
nonsplit;  these are indicated by $\lambda=(1,s)$ or $(1,ns)$. The pairs
$(m,n)$ for which there is a third subgroup (Type 2) with $\lambda=2$
are distributed in a regular pattern. For $\Z\oplus \Zt$, there are three subgroups
when $m$ and $n$ are both even. For $\Z W$ there are three subgroups
when $m+n $ and $m-n$ are both even. 

Each of the subgroups $H^\lambda (m,n)$ is cotoral in $H(\infty, n)$ and is a
subgroup of exactly one of the subgroups $H^\lambda (m, \infty)$
($H^{s}_1(m, n)$ and $H_2(m, n)$ lie in $H^s(m, \infty)$,
and $H^{ns}_1(m, n)$ lies in $H^{ns}(m, \infty)$), The subgroups
$H^\lambda (m, \infty)$ are cotoral in $H(\infty, \infty)=G$.  We draw the
picture for the particular case $\Lambda_0=\Z W$. 

\begin{equation*}
\resizebox{\displaywidth}{!}
{\xymatrix{
H(\infty, 1)&H(\infty,2) &H(\infty,3)&\cdots  & H(\infty, \infty)\\
\vdots &\vdots&\vdots&&\vdots\\
H_1^{s/ns}(4,1)&H_1^{s/ns}(4,2), H_2(4,2)&H_1^{s/ns}(4,3)&\cdots 
&H^{s/ns}(4,\infty)\\
H_1^{s/ns}(3,1), H_2(3,1)&H_1^{s/ns}(3,2)&H_1^{s/ns}(3,3), H_2(3,3)&\cdots 
&H^{s/ns}(3,\infty)\\
H_1^{s/ns}(2,1)&H_1^{s/ns}(2,2), H_2(2,2)&H_1^{s/ns}(2,3)&\cdots 
&H^{s/ns}(2,\infty)\\
H_1^{s/ns}(1,1), H_2(1,1)&H_1^{s/ns}(1,2)&H_1^{s/ns}(1,3), H_2(1,3)&\cdots 
&H^{s/ns}(1,\infty) 
}}
\end{equation*}

\subsection{The Thomason  filtration}
We are considering the space of full subgroups of $G$. 
The pattern is visible from the pictures we have drawn. 
The group $G$ is the unique point of height 2,
there are three lines of height 1, two vertical cotoral lines on the
right and horizontal one at the top that his purely topological.
The height 0 points can be arranged in lines below
the points with finite Weyl groups, and form a 2 dimensional panel.

\begin{lemma}
    The   Thomason height of a subgroup $H$ is equal to the dimension of $H$.    
  The subgroups with finite 
  Weyl groups are precisely those containing $Z$. 

  The space subgroups of Thomason height 1 is partitioned into
  $$\Th_1^+:=\up Z\setminus \up G=\{ H\st Z\subseteq H \neq G\} \mbox{
    and } \Th_1^-=\up \Tt\setminus \up G:=\{ H\st \Tt\subseteq H \neq G\}  $$
\end{lemma}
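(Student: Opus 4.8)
The plan is to work directly from the explicit description of the subgroup lattice recalled from Sections 9 and 11 of \cite{t2wqalg}, reading off heights and inclusions from the catalogue of subgroups $H^\lambda(m,n)$. First I would establish the statement ``Thomason height equals dimension''. Recall that the Thomason height of $(H)$ is the length of the longest chain of cotoral specialisations ending at $(H)$ in the Zariski topology on $\fX_G$, equivalently the longest chain of conjugacy classes of cotoral inclusions into $H$. Since a cotoral inclusion $K\le H$ has quotient a torus of dimension $\dim H - \dim K$, any chain of cotoral inclusions into $H$ has length at most $\dim H$ (each strict step drops dimension by at least one, and $\dim\le 2$ here), giving height $\le\dim$. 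For the reverse inequality I would exhibit, for each subgroup $H$ of dimension $d$, a chain of length $d$: this is exactly what the displayed lattice provides. The points of dimension $0$ are the $H^\lambda(m,n)$ with $m,n<\infty$ (finite groups); those of dimension $1$ are the $H(\infty,n)$ and $H^\lambda(m,\infty)$ (one circle's worth); and $G=H(\infty,\infty)$ has dimension $2$. The cited structure gives cotoral inclusions $H^\lambda(m,n)\subseteq H(\infty,n)$ and $H^\lambda(m,\infty)\subseteq G$ (and $H(\infty,n)\subseteq G$), so a height-$0$ point sits in a length-$2$ cotoral chain up to $G$, and a height-$1$ point in a length-$1$ chain; combined with the upper bound this pins the height down to the dimension.

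Next, the claim that the subgroups with finite Weyl group are precisely those containing $Z$. Here $Z$ is the image circle of the $+1$-eigenspace of the $W$-action on $L\T$, i.e. the identity component of the centre of $G$ (it is central because $W$ acts trivially on it). If $Z\not\subseteq H$ then $ZH/H$ is a positive-dimensional subgroup of $N_G(H)/H$ — concretely $Z$ normalises $H$ since $Z$ is central, and $Z\cap H$ is finite, so $W_G(H)=N_G(H)/H$ contains a nontrivial image of $Z$ and is infinite. Conversely, if $Z\subseteq H$ I would argue that $H$ has finite Weyl group: since $\dim G=2$ and $\dim Z=1$, either $\dim H=2$, so $H=G$ (as $G$ is the unique $2$-dimensional subgroup) with trivial Weyl group, or $\dim H=1$; in the latter case $H$ is a full subgroup (it meets every coset of $\T$, or it would be inside $\T$ and not contain the central circle appropriately — this needs the explicit list), and one checks from the classification that $N_G(H)/H$ is finite. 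The cleanest route is simply to match ``contains $Z$'' against the explicit list: the groups $H^\lambda(m,\infty)$ and $G$ are exactly the ones containing $Z$, and these are exactly the ones recorded in \cite{t2wqalg} as having finite Weyl group, while the $H(\infty,n)$ (containing $\Tt$ but not $Z$) have Weyl group containing a circle.

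Finally, the partition of the height-$1$ locus. By the first part, height-$1$ subgroups are the $1$-dimensional full subgroups. A $1$-dimensional subgroup $H$ has identity component a circle $H^\circ\subseteq\T$, and since $H$ is full, $\pi H=W$ acts on $H_1(H^\circ)$; the only $W$-invariant lines in $L\T$ are the $+1$-eigenline (giving $Z$) and the $-1$-eigenline (giving $\Tt$), so $H^\circ\in\{Z,\Tt\}$, whence $Z\subseteq H$ or $\Tt\subseteq H$. These two cases are disjoint: if both $Z\subseteq H$ and $\Tt\subseteq H$ then $H^\circ\supseteq Z\cdot\Tt=\T$ (they span $L\T$), forcing $\dim H=2$ and $H=G$, contradicting height $1$. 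Removing $\up G=\{(G)\}$ from each side and noting $G$ is excluded by $H\neq G$, we get precisely $\Th_1^+=\up Z\setminus\up G$ and $\Th_1^-=\up\Tt\setminus\up G$ as a partition of $\Th_1$. The main obstacle is the forward direction of the finite-Weyl-group characterisation for $1$-dimensional full $H$ not of the displayed forms — i.e. making sure the list in \cite{t2wqalg} is genuinely exhaustive so that ``$Z\subseteq H\Rightarrow W_G(H)$ finite'' has no exceptions; everything else is bookkeeping against that classification.
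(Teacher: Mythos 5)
The central difficulty with your argument is the characterisation of Thomason height as ``the length of the longest chain of cotoral inclusions into $H$''. That is not the right notion here. The Zariski topology on $\fX_G$ is generated by \emph{two} ingredients — cotoral specialisation and the h-topology (Hausdorff metric topology on $\sub(G)/G$) — and the Thomason height is the Krull dimension in that combined topology. Ignoring the h-topology gives wrong answers in this very example. The one-dimensional subgroups $H^\lambda(m,\infty)$ (which are of $O(2)$- or $Pin(2)$-type) have \emph{no} proper cotoral subgroups: any normal $K\le H^\lambda(m,\infty)$ with torus quotient forces $K=H^\lambda(m,\infty)$, because the dihedral/quaternionic component group cannot be killed by a finite normal subgroup. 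Consequently the longest purely cotoral chain terminating at $G$ has length one, namely $H^\lambda(m,\infty)\subset_{ct} G$, and your characterisation would assign $G$ height $1$, contradicting the lemma. The extra specialisation comes from the h-topology: the finite subgroups $H^\lambda(m,n)$ converge to $H^\lambda(m,\infty)$ in the Hausdorff metric, and it is \emph{this} that makes $H^\lambda(m,\infty)$ a non-closed point of height $1$ and gives $G$ height $2$.

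Relatedly, the specific chain you exhibit is not cotoral. You write ``(and $H(\infty,n)\subseteq G$)'' as though it were a cotoral inclusion, but $H(\infty,n)$ is a proper subgroup of $G$ of positive codimension with \emph{finite} Weyl group; if it were normal in $G$ then $G/H(\infty,n)$ would be one-dimensional and the Weyl group infinite. So $H(\infty,n)\subset G$ is not cotoral, and the chain $H^\lambda(m,n)\subset H(\infty,n)\subset G$ does not witness height $2$. Indeed no length-two \emph{cotoral} chain from a finite subgroup up to $G$ exists: the two potential middle terms, $H(\infty,n)$ and $H^\lambda(m,\infty)$, each fail to complete the chain (the first is not cotoral in $G$; the second does not contain $H^\lambda(m,n)$ cotorally, only as a subgroup). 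The paper instead argues by the closed-point criterion directly: finite subgroups are cotorally minimal \emph{and} h-isolated, hence height $0$; subgroups containing $Z$ or $\Tt$ are either cotorally non-minimal or h-accumulation points, hence not height $0$; the one-dimensional ones are then height $1$ by the same criterion applied at the next stage; $G$ is not h-isolated among the remaining finite-Weyl-group subgroups, so is height $2$. Your treatment of the finite-Weyl-group characterisation and the partition of $\Th_1$ is essentially fine and matches the paper in spirit (the paper's one-liner ``every subgroup $H$ is cotoral in $HZ$'' is the slick way to see that $W_G(H)$ is infinite precisely when $Z\not\subseteq H$), but you have flagged the wrong step as the obstacle: the real gap is the omission of the h-topology from the height computation.
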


\begin{remark}
By \cite[11.5]{prismatic},  the fact that 
Thomason height is equal to the dimension holds for any toral group
for which  all simple summands of the rational toral lattice are one
dimensional.
\end{remark}

\begin{proof}
  This may all be observed directly from the calculations in Sections
  9 and 11 of \cite{t2wqalg} as summarized above.

The statement about finite Weyl groups is clear since every subgroup
$H$ is cotoral in $HZ$.

Finite subgroups are obviously cotorally minimal and they are  h-isolated (even) amongst all 
subgroups of $T^2$, so are of Thomason height 0.

The $W$-invariant subgroups containing a circle contain 
$Z$ or $\Tt$, and are therefore not isolated,  so the groups of
Thomason height 0 are precisely the finite subgroups. 

The one dimensional subgroups are obviously cotorally minimal amongst
the remainder, and h-isolated by the classification. Since $G$ is not
h-isolated, we see that the one dimensional groups are of Thomason
height 1 and partitioned as stated. 
  \end{proof}

\subsection{Neighbourhoods}
For each subgroup $H$ we need  a system of
neighbourhoods of $H$ amongst subgroups  with finite Weyl group.
In fact  we choose a system  of neighbourhoods $\cN_H^\alpha$  of $H$
in $\sub(H)/H$ for positive integers $\alpha$,  so that their images
$\cNbar_H^\alpha$ in $\sub(G)/G$ give neighbourhoods of $H$. We then take their
cotoral downclosures $\cU_H^\alpha:=\Lambda \cNbar_H^\alpha$.

\begin{itemize}
\item Subgroups of Thomason height 0 are isolated, so the neighbourhoods are
  singletons and the horizontal map gives no data.
 \item Subgroups of Thomason height 1, with finite Weyl
group (those of the form $H(\infty , n)$) are isolated amongst subgroups with finite
Weyl group, so we take $\cNbar=\{ H\}$ and
$$\cU_H=\{ K \st K \mbox{
  cotoral in } H\}=\{ K \st KZ=H\} . $$
This consists of the groups in the column below it in the pictures,
which is the subgroups  of the form $H^\lambda(m,n)$ for this fixed
$n$, where $m, \lambda$ take all available values. 
\item Subgroups of Thomason height 1 with infinite Weyl group (those
  of the form $H^{s/ns}(m, \infty)$) take
  $\cU_H^\alpha=\cNbar_H^\alpha$
  to consist of subgroups of order $\geq \alpha$ from amongst
  $H^{\lambda}_1(m,n)$ with the same superscript $\lambda$  together with
  $H_2(m,n)$ in  the split case where relevant.
\item The only subgroup of Thomason height 2 is $G$, where we take
  $\cNbar_G^\alpha$ to consist of subgroups $H(\infty ,n)$ with $n\geq
  \alpha$, so that $\cU_G$ consists of groups $H^\lambda (m,n)$ with $n\geq
  \alpha$ where $m, \lambda$ take all  available values. 
  \end{itemize}



\subsection{The inflation diagram of rings} 
The standard model $\cA (G|\full)$ consists of certain `sheaves' with
additional structure. In fact, we will just
specify the values on four particular sets $U$. These four sets
correspond to subsets of the set of simple representations of $W$
occurring in $H_1(\T; \Q)$: since there two simple representations
$\Q,\Qt$, there are four subsets and four sets $U$. In other language,
they correspond to the four connected $W$-invariant subgroups of $G_e$
$$\up G, \up Z, \up \Tt, \up 1. $$
The rationale for this is that these are naturally occurring sets, each
containing groups that can be conveniently described together. We note
it is consistent with what was done for tori, where the standard model
described the values on sets parametrised by the connected subgroups
$K$ of $G$, where the set corresponding to $K$ is  
$\up K=\{ H \st H\supseteq K\} $. We will show in due course 
 that $\cA$ is a Cellular
Skeleton for the model structure.

For the  standard model we have the following data. First, we have the inflation
diagram of rings
$$\xymatrix{
&\cO_{\cF/G}\drto \dlto&\\
\cOcFZ \drto &&\cOcFTt\dlto\\
&\cOcF&
}$$
where 
$$\cO_{\cF/G}=\Q, \cOcFZ=\prod_{H(\infty, n)}\Q ,
\cOcFTt=\prod_{H^{s/ns}(m, \infty)} \Q [c], 
\cOcF=\prod_{H^{\lambda}(m,n)}\Q[c].$$

The inflation maps from $\cO_{\cF/G}$ are given by the unit.

The inflation map 
$\cOcFZ\lra \cOcF$ is given by components. 
The component indexed by  the finite  subgroup $F=H^{\lambda}(m,n)$ is 
given by choosing the unique subgroup containing $Z$ and $F$ and 
then using the corresponding projection $\cOcFTt\lra H^*(BW_G^e(F\cdot 
Z)$. (The group $F\cdot Z$ in question is $H ( \infty, n)$). 

 The inflation map 
$\cOcFTt\lra \cOcF$ is given by its components. 
The component indexed by  the finite  subgroup $F=H^{\lambda}(m,n)$ is 
given by choosing the unique minimal subgroup containing $\Tt$ and $F$ and 
then using the corresponding projection $\cOcFTt\lra H^*(BW_G^e(F\cdot 
\Tt))$. (The group $F\cdot \Tt$ in question is 
$H^{\lambda'} (m, \infty)$ where $\lambda'=s$ if $\lambda\in \{ (1,
s),  2\}$
and $\lambda'=ns$ if $\lambda=(1, ns)$).

\subsection{The component structure}
The corresponding component structure is indicated by the diagram  
$$\xymatrix{
&1&\\
2\urto &&1\ulto \\
&2/1\urto \ulto&}$$
The two alternatives at the bottom occur only for $\Lambda_0=\Z\oplus \Zt$. In all
cases the homomorphisms are non-trivial wherever possible.

\subsection{Euler classes}
For the Type 1 rings $\cOcFTt$ and $\cOcF$ we write $\cE$ for the
mutliplicatively closed set of Euler classes. More precisely, for
$\cOcF$ this is the set of $c^v$ where $v: \cF\lra \N$ is a function
zero almost everywhere. Similarly for $\cOcFTt$ it is the set of $c^v$
where  $v: \cF\Tt\lra \N$ is a function
zero almost everywhere. 

For the Type 0 subring $\cOcFZ$, we write $\cI$ for the
multiplicatively closed set of characteristic functions of cofinite sets
sets, which are the functions $i: \cF Z\lra \{0,1\}$ which are 1 almost
everywhere. 

\subsection{The standard model}
An object of the standard model is a diagram of $\cOcFt$-modules, where
$$
\cOcFt=\left(
\begin{gathered}
\xymatrix{
&\cEi\cIi \cOcF&\\
\cEi \cOcF\urto &&\cIi \cOcF\ulto \\
&\cOcF\urto \ulto&}
\end{gathered}\right)$$
Thus $N$ is the diagram
$$\xymatrix{
&N(\up G)&\\
N(\up Z)\urto &&N(\up \Tt)\ulto \\
&N(\up 1)\urto \ulto&}$$
which is quasicoherent and extended
\begin{enumerate}
\item {\em quasicoherence} is the requirement that 
$$N(\up Z)=\cEi 
  N(\up 1), N(\up \Tt)=\cIi 
  N(\up 1), N(\up G)=\cEi\cIi   N(\up 1)$$
\item {\em extendedness} is the requirement that 
$$
N(\up Z)=\cEi \cOcF\tensor_{\cOcFZ} \phi^ZN, 
N(\up \Tt)=\cIi \cOcF\tensor_{\cOcFTt} \phi^{\Tt} N,
N(\up G)=\cIi\cEi \cOcF\tensor \phi^GN, $$
where $\phi^ZN$ is an $\cOcFZ$-module,
$\phi^{\Tt} N$ is an $\cOcFTt$-module,
$\phi^GN$ is an $\Q$-module, 
\end{enumerate}

\section{The abelian models are Quillen models: general strategy}
\label{sec:strategy}

 The structure of the argument is the same as that for tori in 
\cite{tnqcore}: we show that the  sphere spectrum is the pullback of 
rings which are isotropically concentrated and formal in a strong 
sense.

The core of the proof is the fact that  the sphere $S^0$ is a homotopy pullback 
of a punctured cube of isotropically simpler ring spectra. 

This allows us to outline the proof: we give a symbolic description 
and then discuss the steps.       
      \begin{multline*}
        \Gspectra|\cV 
\stackrel{0}\simeq S^0\module- \Gspectra|\cV 
      \stackrel{1}\simeq   \cell ((S^0)^{\cospan}\module -\Gspectra)\\
    \stackrel{2}  \simeq  \cell (((\widetilde{S^0})^{\cospan})^{G}\module-\cW-\spectra) 
    \stackrel{3}  \simeq  \cell (C^{G}_*((\widetilde{S^0})^{\cospan})\module 
    -\Q [\cW] \module)\\
    \stackrel{4}  \simeq  \cell (\piG_*((\widetilde{S^0})^{\cospan})\module -\Q [\cW]\module) 
     \stackrel{5} \simeq DG-\cA (G|\cV) 
   \end{multline*}

Equivalence 0 simply uses the fact that $G$-spectra are 
modules over the sphere spectrum. Equivalence 1 uses the fact 
\cite[4.1]{diagrams} that the category of modules over a homotopy pullback ring is equivalent to the 
cellularization of the category of generalized diagrams over the 
individual modules, together with the fact that the localized sphere 
is the pullback.

From Equivalence 2 onwards,  we introduce variants on the terms of the initial cube 
so as to keep track of  the finite Weyl groups. The cospan 
$(\widetilde{S^0})^\cospan$ of $G$-spectra replaces terms of 
$(S^0)^\cospan$ by coinductions which vary by subgroup. 
The new objects have  the property that their $G$-fixed point spectra are 
products of spectra with homology $H^*(BW_G^e(K))$ for relevant 
subgroups $K$ and the category of modules take values in the corresponding product of 
categories with $W_G^d(K)$-action. Equivalence 2 is based on the fact \cite{modfps} that taking fixed points under a connected group gives an equivalence.

Equivalence 3 follows from Shipley's Theorem \cite{ShipleyHZ}, and is 
easily adapted to the type of diagram we have. Equivalence 4 is a formality 
statment,  again requiring proof in each case. Finally, Equivalence 5 
folows from the Cellular Skeleton Theorem, which will 
identify the cellularization of the algebraic category of modules with 
the derived category of an abelian category. Again this needs 
discussion in each case. 

\section{Isotropically simple ring spectra}
\label{sec:isotropicrings}
The key to the proof is to express the sphere spectrum as  a pullback
of a cube of commutative ring spectra which are istropically simple
and formal. In this section we introduce these ring spectra and
establish their basic properties.

\subsection{Geometric localizations}
For a space $X$ and a representation $V$, we may form the  suspension
$S^V\sm X$, and the inclusion $S^0\lra S^V$ induces multiplication by
the Euler class of $V$ in a complex orientable context. Passing to
limits, we may form $\siftyV{H}=\bigcup_{V^H=0}S^V$, which corresponds
to inverting the Euler classes of $H$-essential representations
$\cE_H=\{e(V)\st V^H=0\}$. Analagously,   if $U$ is an open and closed
subset of the space $\Phi G$ of conjugacy classes of subgroups with
finite Weyl group, there is an idempotent in the Burnside ring $A(G)$ with support $U$
and we may form $e_UX$. If  $V$ is an intersection of open and closed
subsets we may form 
$$\ebar_VN=\colim_{U\supseteq V}e_UN.  $$
In particular, this applies to any subgroup $K$ with finite Weyl
group.

\begin{lemma}
\label{lem:VZisotropy}
The support of $\ebar_GS^0$ is the set $\up (\Tt)$ of subgroups containing $\Tt$. 
\end{lemma}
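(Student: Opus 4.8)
The plan is to unwind the definition of $\ebar_G S^0 = \colim_{U \supseteq \{G\}} e_U S^0$ and identify its geometric isotropy. First I would recall that the support of an idempotent $e_U$ (for $U$ open and closed in $\Phi G$, the space of subgroups with finite Weyl group) is exactly $U$, in the sense that $(e_U S^0)^H \simeq S^0$ when $(H) \in U$ and is contractible otherwise, for $H$ with finite Weyl group. Passing to the colimit over the neighbourhood basis of $(G)$ in $\Phi G$, the geometric fixed points $\Phi^H(\ebar_G S^0)$ are nontrivial precisely for those $H$ (with finite Weyl group) that lie in every open-closed neighbourhood of $(G)$ — but we must be careful, since $\ebar_G S^0$ is a localization of the sphere and its support must be computed among \emph{all} subgroups, not just those with finite Weyl group.

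The key geometric input is the Thomason filtration lemma established above: the subgroups with finite Weyl group are precisely those containing $Z$, the subgroups of Thomason height $1$ split as $\Th_1^+ = \up Z \setminus \up G$ (finite Weyl group) and $\Th_1^- = \up \Tt \setminus \up G$ (infinite Weyl group), and $G$ is the unique height $2$ point. The idempotent $e_U$ only sees subgroups with finite Weyl group, so to compute the support of $\ebar_G S^0$ I would argue in two steps. First, among subgroups \emph{with finite Weyl group}: the only such subgroups that are h-limits of subgroups $H(\infty, n)$ (the chosen neighbourhood basis $\cNbar_G^\alpha$ of $G$, from the Neighbourhoods subsection) as $n \to \infty$ is $G$ itself — since the $H(\infty, n)$ form the top cotoral line and their h-closure adds only $G$ — so $\ebar_G S^0$ is supported, among finite-Weyl-group subgroups, on $\{(G)\}$. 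Second, I must account for cotoral subgroups: the support of any $G$-spectrum is closed under passage to cotoral subgroups (this is what makes supports Zariski-closed, as recalled in the Shape-of-model section), and the cotoral subgroups of $G$ are exactly the $H^\lambda(m,\infty)$ and their cotoral subgroups $H^\lambda(m,n)$ — but wait, these all have infinite Weyl group and lie in $\up \Tt$. So the Zariski closure of $\{(G)\}$ under cotoral specialization is contained in $\up \Tt$.

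To finish I would check the reverse inclusion: every subgroup $H$ with $\Tt \subseteq H$ appears in the support. The subgroups containing $\Tt$ are $G$, the $H^{s/ns}(m,\infty)$ (these are cotoral in $G$), and the finite extensions $H^\lambda(m,n)$ lying in some $H^{\lambda'}(m,\infty)$ (cotoral in the latter, hence cotorally below $G$). Since $(G)$ is in the support and support is closed under cotoral specialization, all of these are in the support; and conversely nothing outside $\up \Tt$ can be, by the height/neighbourhood analysis above. Hence the support is exactly $\up(\Tt)$.

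The main obstacle I anticipate is the bookkeeping at the two ``boundary'' phenomena: (i) confirming that no finite-Weyl-group subgroup other than $(G)$ survives the colimit — this rests on the h-isolation statements from the classification in \cite{t2wqalg} and the explicit neighbourhood basis $\cNbar_G^\alpha = \{H(\infty,n) : n \geq \alpha\}$ chosen above, so the content is checking that $\bigcap_\alpha \overline{\cU_G^\alpha}$ meets the finite-Weyl-group locus only in $(G)$; and (ii) correctly invoking closure of supports under cotoral inclusion to jump from the single point $(G)$ out to the full set $\up \Tt$, rather than getting only $\up G$. Both are conceptually routine given the earlier lemma, but the precise identification of which $H^\lambda(m,n)$ are cotoral in which $H^{\lambda'}(m,\infty)$ (recorded in the Subgroups subsection) needs to be used carefully to see the support is \emph{all} of $\up \Tt$ and not a proper subset.
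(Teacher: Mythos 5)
Your plan has the right general shape (the paper's one-line proof, ``immediate from the supports of the idempotents'', is essentially a compressed version of it), but the execution of the cotoral step contains two factual errors that happen to cancel, together with a genuine logical gap in the direction of the inclusion.

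The factual errors: you write that the $H^\lambda(m,n)$ with both $m,n$ finite are cotoral in $H^\lambda(m,\infty)$ and that they ``lie in $\up\Tt$''. Neither is true. These are \emph{finite} groups, so they do not contain the circle $\Tt$ and are not in $\up\Tt$; and they are not normal in $H^\lambda(m,\infty)$ (a dihedral subgroup is not normal in $O(2)$, for example), hence not cotoral in it. The paper's ``subgroups'' subsection is explicit: $H^\lambda(m,n)$ is cotoral in $H(\infty,n)$ and merely a \emph{subgroup} of $H^\lambda(m,\infty)$, and the $H^\lambda(m,\infty)$ are in fact cotorally minimal. Thus the cotoral down-closure of $\{(G)\}$ is $\{(G)\}\cup\{(H^\lambda(m,\infty))\}=\up\Tt$ exactly; but your reasoning would produce a larger set — and if your cotorality claim were true, that larger set would \emph{not} be contained in $\up\Tt$, because the finite groups are outside it. You reach the right answer only because the error about cotorality compensates the error about membership in $\up\Tt$.

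The logical gap: to prove $\mathrm{supp}(\ebar_G S^0)\subseteq\up\Tt$ you need an \emph{upper} bound on the support. The fact that supports are closed under passage to cotoral subgroups is a downward-closure statement — it yields $\mathrm{supp}(\ebar_G S^0)\supseteq\Lambda\{(G)\}$, not the reverse. What is actually needed is the converse property, special to Burnside-ring idempotents and their colimits: if $\Phi^K(\ebar_GS^0)\neq 0$ then $\Phi^{KZ}(\ebar_GS^0)\neq 0$ (the geometric fixed points of an idempotent at $K$ are governed by its value near the finite-Weyl-group class $(KZ)$), so that the support is exactly the cotoral down-closure of its finite-Weyl-group part. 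This is precisely ``the supports of the idempotents'' that the paper cites; you flag the need to ``be careful'' at this point but never supply the argument, so as written the inclusion $\mathrm{supp}\subseteq\up\Tt$ is unproved.
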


\begin{proof}
This is immediate from the supports of the idempotents. 
\end{proof}

\begin{lemma}
\label{lem:VZisotropy}
The support of $\siftyV{Z}$ is the set $\up (Z)$ of subgroups containing $Z$. 
\end{lemma}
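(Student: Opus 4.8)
The claim is that the support of $\siftyV{Z}$ is $\up(Z) = \{H \st Z \subseteq H\}$, the set of subgroups containing the central circle $Z$. The strategy mirrors the proof of the previous lemma: $\siftyV{Z}$ is obtained from $S^0$ by inverting the Euler classes $\cE_Z = \{e(V)\st V^Z = 0\}$ of the $Z$-essential representations, so I would compute its geometric isotropy subgroup by subgroup and show the answer is exactly $\up(Z)$.

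\emph{First step: the geometric fixed points.} For a subgroup $H \leq G$, the $H$-geometric fixed points $\Phi^H$ commute with the filtered colimit defining $\siftyV{Z}=\bigcup_{V^Z=0}S^V$, so $\Phi^H(\siftyV{Z}) = \colim_{V^Z=0}\Phi^H(S^V) = \colim_{V^Z=0}S^{V^H}$. Hence $\Phi^H(\siftyV{Z})$ is contractible precisely when the colimit is taken over a system in which the bonding maps (inclusions of spheres) are eventually not equivalences, i.e. when there exists a $G$-representation $V$ with $V^Z = 0$ but $V^H \neq 0$; and it is equivalent to $S^0$ when every $V$ with $V^Z=0$ also has $V^H=0$.

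\emph{Second step: the representation-theoretic dichotomy.} So the content reduces to: $H$ is in the support iff every representation $V$ with $V^Z = 0$ has $V^H = 0$, equivalently (taking $V$ to run over irreducibles) no irreducible $G$-representation has $V^Z = 0$ and $V^H \neq 0$. If $Z \subseteq H$, then $V^H \subseteq V^Z$, so $V^Z = 0$ forces $V^H = 0$ — hence $\up(Z)$ is contained in the support. Conversely, if $Z \not\subseteq H$, I need to produce a single representation $V$ with $V^Z = 0$ and $V^H \neq 0$. Here I would use the structure recalled in the section on the groups: $Z$ is the image circle of the $+1$-eigenspace of $W$ acting on $L\T$, and $\T = Z \cdot \Tt$. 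Take $V$ to be the (rationalization of the) defining $2$-dimensional representation on $L Z$-coordinates — more precisely a representation on which $Z$ acts by a nontrivial character (so $V^Z = 0$) but which is trivial on a complement. Since $Z \not\subseteq H$, the composite $H \hookrightarrow G \to G/(\text{something})$ lands so that $H$ acts trivially on a suitable $1$-dimensional summand, or one argues that $H \cap Z$ is finite, so some power/character of $V$ restricted to $H$ has a fixed vector. The cleanest route: pick the rank-one $Z$-representation $V$ pulled back along a character $\chi\colon \T \to \Gm$ that is trivial on $\Tt$ and nontrivial on $Z$ (this exists since $\T = Z\Tt$ and $Z \cap \Tt$ is finite), inflate/induce to $G$ if $W$ does not fix $\chi$; then $V^Z = 0$ by construction while $V^H \neq 0$ because $H$, not containing $Z$, meets $Z$ in a proper (hence finite) subgroup on which $\chi$ — being a circle character of infinite order on $Z$ — is eventually trivial after replacing $V$ by a tensor power, which does not change the vanishing locus.

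\emph{Main obstacle.} The genuinely delicate point is the converse direction — exhibiting, for each $H \not\supseteq Z$, an explicit $Z$-essential representation with nonzero $H$-fixed points — and in particular handling the full subgroups $H$ (those with $\pi H = W$) correctly, since there one must be careful that the representation is genuinely a $G$-representation (or that $W$ permits the construction), not merely a $\T$-representation. I expect this to be dispatched by quoting the explicit subgroup classification from Sections 9 and 11 of \cite{t2wqalg} together with the eigenspace description of $Z$: for every full or toral $H$ with $H \not\supseteq Z$, the intersection $H \cap Z$ is finite, and a sufficiently divisible character of $Z$ extended to $G$ gives the required $V$. Given Lemma \ref{lem:VZisotropy} (the $\ebar_G S^0$ statement) as a template and the support-of-idempotents bookkeeping, the remaining argument is routine representation theory, and I would present it as "immediate from the structure of the $Z$-essential representations recalled above," mirroring the one-line proof of the companion lemma.
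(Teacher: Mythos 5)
Your overall plan is the same as the paper's: compute $\Phi^H(\siftyV{Z})$ as a colimit of $S^{V^H}$ over $Z$-essential $V$, so that $H$ lies in the support iff every $Z$-essential $G$-representation $V$ also has $V^H=0$; then observe that $Z\subseteq H$ forces $V^H\subseteq V^Z$, while for $Z\not\subseteq H$ one must exhibit a single $Z$-essential $V$ with $V^H\neq 0$.

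The converse direction is where your proof has a genuine gap. For a full $1$-dimensional $H$ with $Z\not\subseteq H$, you have $H\cap\T\supseteq\Tt$ and $H\cap Z$ finite, and you build a character $\chi\colon\T\to T$ trivial on $\Tt$ with $\chi|_Z$ nontrivial, then tensor to a power to kill $\chi$ on the finite group $H\cap Z$. That does make $\chi$ trivial on all of $H\cap\T$ (since $\T=Z\Tt$, triviality on $\Tt$ and on $H\cap Z$ gives triviality on $H\cap\T=(H\cap Z)\cdot\Tt$). But you then conclude $V^H\neq 0$ while only having controlled $V^{H\cap\T}$; $H$ is full, so $H/(H\cap\T)\cong W$ has order $2$, and when you extend $\chi$ to a one-dimensional $G$-representation $\tilde\chi$ the element $\sigma\in H\setminus\T$ acts by $\tilde\chi(\sigma)=\pm 1$. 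In the $-1$ case $V^H=0$, which defeats the purpose. You gesture at ``inflate/induce to $G$ if $W$ does not fix $\chi$'' but never check that $W$ fixes $\chi$ (it does — $W$ fixes $Z$ pointwise and acts by inversion on $\Tt$, both of which preserve $\chi$), and you never resolve the sign. To fix this you would either (a) use that the two extensions of a $W$-fixed $\chi$ differ by the sign character of $W$, so one of them has $\tilde\chi(\sigma)=1$; or (b) take $V=\mathrm{Ind}_\T^G\chi$ and compute via Mackey that $V^H=(\chi|_{H\cap\T})^{H\cap\T}\neq 0$. The paper sidesteps all of this at one stroke: it notes that such $H$ is normal in $G$ with $G/H$ a circle or $T\times C_2$, and chooses a $Z$-essential representation of $G/H$, which inflates to the required $V$ with $V^H\neq 0$ automatically since $H$ acts trivially on anything inflated from $G/H$.

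One smaller remark: your sentence ``or one argues that $H\cap Z$ is finite, so some power/character of $V$ restricted to $H$ has a fixed vector'' conflates the target $H\cap\T$ with $H\cap Z$; as above the reduction from $H\cap\T$ to $H\cap Z$ needs the (correct, but unstated) observation that $\chi|_{\Tt}$ is already trivial. The finite-$H$ case you reduce to the $1$-dimensional case, which matches the paper.
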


\begin{proof}
By definition $(\siftyV{Z})^{Z}=0$ and so the support of
$\siftyV{Z}$ contains $V(Z)$. On the other hand, if $H$ is a full
1-dimensional subgroup of $G$ not containing $Z$, then it is normal and $G/H$ is either a
circle or $T\times C_2$, we may choose a $Z$-essential
representation of 
$G/H$. If $H$ is a finite group then it is a
subgroup of a 1-dimensional subgroup not containing $Z$.
\end{proof}

\begin{lemma}
\label{lem:VGisotropy}
The support of $\siftyV{G}=\siftyV{Z}\sm \ebar_GS^0$ is the singleton
$\up (G)=\{ G\}$.\qqed
\end{lemma}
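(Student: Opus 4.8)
The claim is that the support of $\siftyV{G}=\siftyV{Z}\sm \ebar_GS^0$ is the singleton $\{G\}$. The plan is to combine the two preceding lemmas using the fact that the support of a smash product of localizations of $S^0$ is the intersection of the supports. Indeed, $\siftyV{Z}$ and $\ebar_GS^0$ are both obtained from $S^0$ by inverting Euler classes (respectively, applying a Burnside-ring idempotent colimit), so each is a smashing localization and $\supp(X\sm Y)=\supp(X)\cap\supp(Y)$ for such objects; alternatively one checks directly on geometric fixed points $\Phi^H(\siftyV{Z}\sm\ebar_GS^0)\simeq\Phi^H(\siftyV{Z})\sm\Phi^H(\ebar_GS^0)$, which is nonzero exactly when both factors are nonzero.

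By Lemma \ref{lem:VZisotropy} (the second one, on $\siftyV{Z}$) the support of $\siftyV{Z}$ is $\up(Z)$, the set of subgroups containing $Z$; by Lemma \ref{lem:VZisotropy} (the first one, on $\ebar_GS^0$) the support of $\ebar_GS^0$ is $\up(\Tt)$, the set of subgroups containing $\Tt$. Hence
$$\supp(\siftyV{G})=\up(Z)\cap\up(\Tt)=\{H\st Z\subseteq H \mbox{ and }\Tt\subseteq H\}=\up(Z\cdot\Tt).$$
So it remains to identify $\up(Z\cdot\Tt)$ with $\{G\}$. Since $Z$ and $\Tt$ are the image circles of the $+1$ and $-1$ eigenspaces of $L\T$, their internal product $Z\cdot\Tt$ is all of $\T$ (this is clear for $\Lambda_0=\Z\oplus\Zt$ where $\T=Z\times\Tt$, and for $\Lambda_0=\Z W$ the two circles generate $\T$ since together their Lie algebras span $L\T$). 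A subgroup $H$ with $\T\subseteq H$ is full, so $\pi H=W$, forcing $H=G$. Thus $\up(Z\cdot\Tt)=\up(\T)=\{G\}$, as required.

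The only point needing care is the first step: that the support of the smash product is the intersection of the supports. This is standard for smashing Bousfield localizations away from $S^0$, but it is worth noting that both $\siftyV{Z}$ and $\ebar_GS^0$ arise as filtered colimits of finite $G$-spectra (suspensions $S^V$ and idempotent summands), so their smash product's geometric fixed points compute correctly; the identity $\supp(X\sm Y)=\supp X\cap\supp Y$ then follows from $\Phi^H$ being monoidal and detecting triviality. Given that, the rest is the elementary group theory above, and indeed the statement is essentially immediate once one records $Z\cdot\Tt=\T$ and that $\T\subseteq H$ implies $H=G$.
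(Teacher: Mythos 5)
The paper itself gives no proof of this lemma (it is marked as immediate, following from the two preceding support lemmas), and your argument supplies exactly the intended missing steps: $\Phi^H$ is monoidal and jointly conservative on rational $G$-spectra, so $\supp(\siftyV{Z}\sm\ebar_G S^0)=\supp(\siftyV{Z})\cap\supp(\ebar_GS^0)=\up(Z)\cap\up(\Tt)=\up(\T)$, and one then identifies $\up(\T)$ with $\{G\}$.

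There is one small slip at the end. You write ``A subgroup $H$ with $\T\subseteq H$ is full'' --- this is false for $H=\T$, which contains $\T$ but has $\pi(\T)=1\ne W$. The correct observation is that the only subgroups of $G$ containing $\T$ are $\T$ and $G$ (since $\T$ has index $2$), and of these only $G$ is full; since the whole discussion of supports in this section takes place in the category of $G$-spectra over $\cV=\cV^G_{\full}$ (so that $\T\notin\cV$), the support is indeed the singleton $\{G\}$. The same implicit restriction to $\cV$ is what makes the two input lemmas correct as stated: read literally over all of $\sub(G)/G$, the maximal torus $\T$ would lie in $\up(Z)$, and so would need to be excluded on other grounds. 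With that clarification your proof is correct and is the argument the paper intends.
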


\begin{remark}
We occasionally use the notation $S^{\cC}$ for the couniversal 
space of the cofamily $\cC$, so that we have 
$$\siftyV{Z}=\supV{Z}, \siftyV{G}=\supV{G}, \ebar_GS^0=\supV{\Tt}.$$
The visual pun relating $V(K)$ to $\up (K)$ is intended to be suggestive.  
\end{remark}

\begin{cor}
(i) The object $e_nS(\infty V(Z))_+$ has support consisting of 
subgroups  $H^\lambda (m ,n)$ for $1\leq m <\infty$. For each such subgroup $H^\lambda(m,n)$
there is an idempotent selfmap of $e_nS(\infty V(Z))_+$ and the summand is 
$E\lr{H^\lambda(m,n)}$. 

(ii) The object $\ebar_G S(\infty V(Z))_+$ has support consisting of 
subgroups  $H^\lambda (m ,\infty )$ for $1\leq m <\infty$. For each such subgroup 
$H^\lambda(m,\infty )$
there is an idempotent selfmap of $\ebar_G S(\infty Z)$ and the summand is 
$E\lr{H^\lambda(m,\infty)}$. 
\end{cor}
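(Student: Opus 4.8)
The plan is to deduce both statements by combining the support computations of the previous three lemmas with the standard idempotent-splitting machinery over the Burnside ring, applied after inverting the appropriate Euler classes. First I would recall that $e_n$ is the idempotent in $A(G)$ with support the h-clopen set $\{H(\infty,n)\}$ (a single point of $\Phi G$ at height $1$), so $e_n S(\infty V(Z))_+ = e_n\siftyV{Z}\sm S^0_+$ has support equal to the intersection of $\supp(e_n)=\up$-closure of the point $H(\infty,n)$ with $\supp(\siftyV{Z})=\up(Z)$ from Lemma~\ref{lem:VZisotropy}. The key point is that this intersection is exactly the cotoral downset of $H(\infty,n)$ inside $\up(Z)$, which by the subgroup classification recalled in Section~\ref{sec:abmixed} is precisely the column $\{H^\lambda(m,n)\st 1\leq m<\infty,\ \lambda\text{ available}\}$; the group $G=H(\infty,\infty)$ is excluded because it does not lie over the fixed finite level $n$. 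For part (ii) the same argument applies with $\ebar_G$ in place of $e_n$: by Lemma~\ref{lem:VGisotropy} (or rather the identity $\siftyV{G}=\siftyV{Z}\sm\ebar_GS^0$ and Lemma~\ref{lem:VZisotropy}) the relevant support is $\up(Z)\cap\up(\Tt)$-type data restricted by $\ebar_G$, and one reads off from the classification that the support becomes $\{H^\lambda(m,\infty)\st 1\leq m<\infty\}$, i.e.\ the right-hand vertical lines, again with $G$ itself removed.

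Next I would produce the idempotent self-maps and name the summands. For each subgroup $K=H^\lambda(m,n)$ in the support, since $K$ has finite Weyl group and is h-isolated among subgroups with finite Weyl group once we have localized away from the rest of the column (this is where the neighbourhood systems $\cN^\alpha$ of Section~\ref{sec:abmixed} and the finiteness ``zero almost everywhere'' conditions on the Euler classes enter), there is a corresponding idempotent in the relevant Burnside-type ring of endomorphisms of $e_nS(\infty V(Z))_+$. I would identify the resulting summand with $\Etlr{K}=E\lr{H^\lambda(m,n)}$ by checking it has the correct geometric isotropy, namely concentrated at the single conjugacy class $(K)$: this is the standard characterization of $\Etlr{K}$ as the object with $\Phi^{K'}\Etlr{K}$ contractible for $K'\neq K$ and $\Phi^K\Etlr{K}$ the appropriate fixed-point spectrum, and it follows from the support/idempotent bookkeeping already set up. The same template handles part (ii) with $K=H^\lambda(m,\infty)$ and the object $\ebar_GS(\infty V(Z))_+$.

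The main obstacle I expect is not the support computation — that is bookkeeping against the explicit classification in \cite{t2wqalg} — but rather verifying carefully that the idempotent self-maps genuinely exist and are orthogonal, i.e.\ that after inverting $\cE$ (for the Type~1 directions) the relevant endomorphism ring really does split as the expected product indexed by the subgroups in the support, with each factor contributing a single $\Etlr{K}$. This requires knowing that the subgroups $H^\lambda(m,n)$ in a fixed column are mutually h-separated in a way compatible with the idempotents of $A(G)$, and that no ``limit'' subgroup sneaks into the support after localization; both are consequences of the height-$1$ h-isolation established in the Thomason-filtration lemma above, but one must be careful that $H^{s/ns}(m,\infty)$ (for part (i)) and $G$ (for both parts) are correctly excluded. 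Once that separation is in hand, the identification of each summand with $\Etlr{K}$ is forced by its isotropy, and the corollary follows.
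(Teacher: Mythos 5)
Your plan---intersect the support of the localizing idempotent with the support of $S(\infty V(Z))_+$, then use h-isolation to split off summands---is the right shape, but two steps fail as written.

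First, you conflate $S(\infty V(Z))_+$ with its cofibre $\siftyV{Z}$. You compute the support of $e_nS(\infty V(Z))_+$ as $\supp(e_n)\cap\supp(\siftyV{Z})=\supp(e_n)\cap\up(Z)$, and since the only member of the cotoral downclosure of $H(\infty,n)$ that contains $Z$ is $H(\infty,n)$ itself, this intersection is the singleton $\{H(\infty,n)\}$, not the column. Your intermediate sentence, ``the cotoral downset of $H(\infty,n)$ inside $\up(Z)$ \ldots is precisely the column,'' is therefore internally inconsistent. What the computation needs is the support of $S(\infty V(Z))_+$ itself, which from the cofibre sequence $S(\infty V(Z))_+\to S^0\to\siftyV{Z}$ is the \emph{complement} of $\up(Z)$; intersecting with $\supp(e_n)$ (the cotoral downclosure of $\{H(\infty,n)\}$) then gives the column with its top removed, i.e.\ $\{H^\lambda(m,n)\st 1\leq m<\infty\}$. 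You also address the wrong exclusion: in part~(i) the point that must be dropped from the column is $H(\infty,n)$, not $G$ (which is never in $\supp(e_n)$); and in part~(ii) your description ``$\up(Z)\cap\up(\Tt)$-type data'' again points at $\up(Z)$ where the complement is wanted.

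Second, the mechanism you give for the idempotents is incorrect. You write ``since $K=H^\lambda(m,n)$ has finite Weyl group,'' but for $m<\infty$ this is false: such $K$ is a finite subgroup cotoral in $H(\infty,n)$, so $W_G(K)$ contains a circle. Likewise the $H^\lambda(m,\infty)$ of part~(ii) contain $\Tt$ but not $Z$, and by the Thomason-height lemma the finite-Weyl-group subgroups are exactly those containing $Z$. So these idempotents cannot be produced as idempotents of $A(G)$ ``at $K$.'' What actually does the work is that every subgroup appearing in the computed supports is h-isolated \emph{in $\cV$}: the finite subgroups are isolated, and the height-$1$ subgroups $H^\lambda(m,\infty)$ are isolated among height-$1$ points; hence the relevant clopen subset of the geometric isotropy of $S(\infty V(Z))_+$ is a disjoint union of singletons, and the universal space over it splits accordingly as $\bigvee_K E\lr{K}$. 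Your invocation of Euler-class localizations is a red herring: $e_n$ and $\ebar_G$ are (colimits of) Burnside-ring idempotents, and neither $\cE$ nor $\cI$ needs to be inverted to see this splitting.
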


This enables us to construct objects with the required supports.

\subsection{Commutative ring spectra}
Now we need to construct some commutative ring spectra. It will be sufficient 
for them to be commutative in the naive sense (i.e., the least commutative of the 
$N_\infty$ options). 

The first construction is that any couniversal space has a commutative 
model. Thus if $\cC$ is a cofamily, the space $E\cC$ (which could be 
constructed as $S^0*E\cC^c$) is a commutative ring. 
 
We note that spaces of the form $X_+$ have a diagonal, so that $F(X_+, 
R)$ is a commutative ring spectrum for any commutative ring spectrum 
$R$. 
This in particular applies when $X$ is a  universal space $E\mcH$ for a family 
of subgroups $\mcH$. Given an idempotent self map of $E\mcH_+$ we may 
localize $DE\mcH_+$ at $eDE\mcH_+$  to obtain a (naive-)commutative 
ring spectrum. 

We need to describe the diagram
$$\xymatrix{
&\mcR (\up G)\drto \dlto&\\
\mcR(\up Z)\drto &&\mcR(\up \Tt)\dlto\\
&\mcR(\up 1)&
}$$

We first give the answer, and then give a detailed discussion of 
the notation and choices in turn. 
$$\xymatrix{
&\siftyV{Z}\sm \ebar_GS^0\drto \dlto&\\
\siftyV{Z}\sm DE\lr{\cF/Z} \drto &&\ebar_GDE\lr{\cF/\Tt}\dlto\\
&DE\lr{\cF}&
}$$

\begin{lemma}
For the set $\up G$, we may take the commutative ring 
$$\mcR(\up G)=\siftyV{Z}\sm \ebar_GS^0\simeq \etp, $$
and 
$$\piG_*(\mcR (\up G))=\Q. $$
\end{lemma}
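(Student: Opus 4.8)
The plan is to compute $\piG_*(\mcR(\up G))$ for $\mcR(\up G) = \siftyV{Z} \sm \ebar_G S^0$ by identifying this spectrum homotopically and then reading off its $G$-fixed point homotopy. First I would record the identification $\siftyV{Z} \sm \ebar_G S^0 \simeq \etp$, which follows from Lemma \ref{lem:VGisotropy} together with the remark that $\siftyV{Z} = \supV{Z}$ and $\ebar_G S^0 = \supV{\Tt}$: the smash product of couniversal spaces for the cofamilies complementary to $\up Z$ and to $\up \Tt$ is the couniversal space for the cofamily complementary to $\up Z \cap \up \Tt = \up G$, and since $\up G = \{G\}$ the relevant cofamily is the family $\cP$ of proper subgroups, so $\etp$ is the couniversal space $S^{\cP}$ in the notation of the preceding remark. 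Thus $\mcR(\up G)$ is, up to equivalence, the localization of $S^0$ that geometrically inverts everything except the top point $G$.

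Given that identification, the computation of homotopy is the standard one for $\etp$: there is a cofibre sequence $E\cP_+ \lra S^0 \lra \etp$, and applying $G$-fixed points (equivalently, $\piG_*(-)$, the genuine $G$-equivariant homotopy) gives a long exact sequence. Rationally, $\piG_*(E\cP_+)$ is the summand of the Burnside ring and its higher analogues supported on proper subgroups with finite Weyl group, while $\piG_*(S^0) = A(G)\otimes\Q$ concentrated in degree $0$ (rationally $\piG_*(S^0)$ is just the rational Burnside ring in degree zero, since $G$ is infinite and rational stable homotopy of the sphere is concentrated in degree $0$). The map $\piG_0(E\cP_+) \lra \piG_0(S^0)$ is the inclusion of the ideal of the Burnside ring supported away from $G$, and its cokernel is exactly the idempotent summand $e_G A(G)\otimes\Q \cong \Q$ corresponding to the isolated point $G \in \Phi G$; in positive and negative degrees the long exact sequence forces $\piG_*(\etp) = 0$. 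Hence $\piG_*(\mcR(\up G)) = \Q$, concentrated in degree $0$, as claimed. The ring structure is the evident one since $\mcR(\up G)$ is a (naive-commutative) ring spectrum and the unit $S^0 \lra \mcR(\up G)$ realizes the quotient $A(G)\otimes\Q \lra \Q$ on $\pi_0$.

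The main obstacle — really the only point needing care — is justifying that $\up G = \{G\}$ forces the complementary cofamily to be the full family of proper subgroups, i.e. that there are no subgroups with finite Weyl group other than $G$ lying in $\up Z \cap \up \Tt$, and that the idempotent-summand picture identifies the cokernel cleanly as $\Q$ rather than a larger ring. This is where I would lean on the Thomason filtration lemma above: the only subgroup of Thomason height $2$ is $G$, and any proper $H$ either fails to contain $Z$ or fails to contain $\Tt$ (a proper full one-dimensional subgroup cannot contain both circles, and a finite subgroup contains neither circle), so $\up Z \cap \up \Tt = \{G\}$ exactly. With that in hand the geometric fixed-point/idempotent bookkeeping is routine, and $\piG_0 = \Q$ follows because $G$ is an isolated point of $\Phi G$ with trivial Weyl group, so $H^*(BW_G^e(G)) = H^*(BG/G) = \Q$ in degree $0$.
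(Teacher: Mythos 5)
Your identification $\mcR(\up G)=\siftyV{Z}\sm \ebar_GS^0\simeq \etp$ is exactly the paper's route: it follows from the three isotropy lemmas preceding the statement, and your observation that $\up Z\cap\up\Tt=\{G\}$ because a proper full subgroup cannot contain both $Z$ and $\Tt$ is precisely the content of Lemma \ref{lem:VGisotropy}. That part is fine.

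The computation of $\piG_*$ has a genuine flaw. You assert that $G$ is an isolated point of $\Phi G$ and invoke an idempotent $e_G\in A(G)\tensor\Q$ supported exactly at $\{G\}$. But $G$ is \emph{not} h-isolated in $\Phi G$: the subgroups with finite Weyl group are the $H(\infty,n)$, and these converge to $G=H(\infty,\infty)$ in the Hausdorff metric topology. The paper's Thomason-height lemma says this explicitly (``Since $G$ is not h-isolated\ldots''). Because $\{G\}$ is not clopen in $\Phi G$, no such idempotent exists, so the phrase ``the idempotent summand $e_G A(G)\otimes\Q$'' does not make sense and cannot serve as the cokernel. What \emph{is} true is that the tom Dieck splitting gives an additive decomposition $\piG_*(S^0)\tensor\Q\cong\bigoplus_{(H):\ W_G(H)\ \mathrm{finite}}H_*(BW_G(H);\Q)$, the map from $\piG_*(E\cP_+)$ is the inclusion of the proper-$(H)$ summands, and the cokernel is the $(G)$-summand $H_*(BW_G(G);\Q)=\Q$. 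That is an additive splitting, not an idempotent one, and the distinction matters precisely because $G$ is not isolated.

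A cleaner argument, and the one ``the homotopy is immediate'' is gesturing at, bypasses the Burnside ring entirely: since $\etp$ is the couniversal space for the cofamily $\{G\}$, the $G$-geometric fixed point functor is an equivalence on $\etp$-local spectra, so
$\piG_*(\etp)=[\Phi^GS^0,\ \Phi^G\etp]_*=[S^0,S^0]_*=\pi_*^s\tensor\Q=\Q$
concentrated in degree $0$. This matches the pattern the paper follows for the other three vertices of the diagram, each of which is computed by passing to a suitable (geometric) fixed-point category. I would replace your cofibre-sequence/idempotent paragraph with this one-line fixed-point computation, and in any case delete the claim that $G$ is isolated.
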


\begin{proof}
The isotropy lemmas in the previous section establish the equivalence, 
from which the homotopy is immediate. 
\end{proof}

For the set $\up Z$, we note $Z$ is normal, and we would probably expect the expression 
$\siftyV{Z} \sm DE\cF/Z_+$, where $DE\cF/Z_+$ is constructed in the
group $G/Z=O(2)$ and then inflated to $G$. This is correct, except
that the only finite subgroups of $O(2)$ that are relevant are the
dihedral ones. We may therefore consider $DE\cF/Z_+$ itself and then
localize using the idempotent $(1-e_{toral})$ supported on the
non-toral subgroups. Alternatively we may avoid this discussion and
directly take  $DE\lr{\mcD}\simeq \prod_nDE\lr{D_{2n}}$, noting that
we may simplify this somewhat since $DE\lr{D_{2n}}\simeq
e_{D_{2n}}S^0$.  In any case
the essential fact is that the spectrum is constructed as an
$O(2)$-spectrum and inflated. 

\begin{lemma}
For the set $\up Z$, we may  take the commutative ring 
$$\mcR(\up Z)=\siftyV{Z}\sm DE\lr{\mcD} $$
and 
$$\piG_*(\mcR (\up Z))=\prod_{D\in \mcD}\Q. $$
\end{lemma}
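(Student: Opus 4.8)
The plan is to prove the two assertions separately: that $\mcR(\up Z)$ is a (naive-)commutative ring, and that $\piGs\mcR(\up Z)=\prod_{D\in\mcD}\Q$. For commutativity I would use that $DE\lr{\mcD}$ is, by construction, inflated along $p\colon G\lra G/Z=O(2)$ from a spectrum built inside $O(2)$-spectra: namely $F(E\cF_+,S^0)$, with $\cF$ the family of finite subgroups of $O(2)$, localised at the idempotent of $A(O(2))$ supported on the dihedral subgroups. Since $E\cF_+$ has a diagonal, $F(E\cF_+,S^0)$ is a naive-commutative ring; localising a commutative ring at an idempotent, and inflating along $p$ (a strong symmetric monoidal functor), both preserve naive-commutative ring structures. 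On the other hand $\siftyV{Z}$ is the couniversal space of the cofamily $\up Z$, and so has a commutative model as recalled above. Therefore $\mcR(\up Z)=\siftyV{Z}\sm DE\lr{\mcD}$, being a smash product of naive-commutative rings, is one, which is all that is required.

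For the homotopy the strategy is to isolate the contribution of each subgroup and then reassemble. Using $DE\lr{\mcD}\simeq\prod_n DE\lr{D_{2n}}$ and $DE\lr{D_{2n}}\simeq e_{D_{2n}}S^0$, I would show that $\siftyV{Z}\sm DE\lr{\mcD}$ is concentrated, as a rational $G$-spectrum, at the subgroups $H(\infty,n)$. Indeed the inflation along $p$ of the $D_{2n}$-summand has support the whole column $\{H^{\lambda}(m,n)\st 1\leq m\leq\infty\}$ of subgroups lying over $D_{2n}$, and by the isotropy Lemma \ref{lem:VZisotropy} the factor $\siftyV{Z}$ annihilates every isotropy group not containing $Z$, leaving only $H(\infty,n)$. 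This subgroup has finite Weyl group $W_G(H(\infty,n))\cong W_{O(2)}(D_{2n})=C_2$, and the geometric $H(\infty,n)$-fixed points of the surviving summand are $\Phi^{H(\infty,n)}(\siftyV{Z})\sm\Phi^{D_{2n}}(e_{D_{2n}}S^0)\simeq S^0$, with trivial residual $C_2$-action. Since $W_G(H(\infty,n))$ is finite---so its identity component is trivial and there is no $H^*(BW^e)$-contribution---a rational $G$-spectrum concentrated at $H(\infty,n)$ with geometric fixed points the rational sphere and trivial residual action has homotopy $\Q$ concentrated in degree $0$. Running over $n$ then gives $\piGs\mcR(\up Z)=\prod_n\Q=\prod_{D\in\mcD}\Q$.

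The main obstacle I anticipate is a bookkeeping one: smashing with $\siftyV{Z}$ (equivalently, inverting the $Z$-essential Euler classes), and inflation along $p$, do not commute with infinite products in general, so the passage from the per-subgroup computations to $\prod_{D\in\mcD}\Q$ is not automatic. What rescues it is that the products in play are locally finite: at any fixed subgroup only one of the $D_{2n}$-summands is non-trivial, because the geometric fixed points $\Phi^{D_{2n_0}}(e_{D_{2n}}S^0)$ vanish for $n\neq n_0$. Rationally this local finiteness suffices to interchange the geometric localisation, the inflation and the product; the argument is the exact analogue of the corresponding step for tori, so I would either reproduce it or cite it. I expect this step, rather than the commutativity or the single-subgroup fixed-point calculation, to carry the real content.
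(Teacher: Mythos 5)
Your first half (commutativity) matches the paper's discussion preceding the lemma. For the homotopy computation, however, you take a summand-by-summand route that creates the very obstacle you then acknowledge but do not close: you propose to resolve the interchange of $\siftyV{Z}\sm(-)$ and inflation with the infinite product by invoking ``local finiteness'' and analogy with the torus case, but this is not a proof, and the analogy is not obviously apt since inflation is not present in the torus argument. The paper avoids the issue entirely. Since $Z$ is normal and $DE\lr{\mcD}$ is inflated from $G/Z=O(2)$, one applies the $\Phi^{Z}$-adjunction
$$[X,\ \siftyV{Z}\sm Y]^G_*\ \cong\ [\Phi^Z X,\ \Phi^Z Y]^{G/Z}_*$$
to the \emph{whole} spectrum at once: $\Phi^{Z}$ undoes inflation and absorbs $\siftyV{Z}$, so $\piG_*\bigl(\mcR(\up Z)\bigr)=\pi^{O(2)}_*\bigl(DE\lr{\mcD}\bigr)$, and then $\pi^{O(2)}_*$ is a represented functor, hence commutes with the product $DE\lr{\mcD}\simeq\prod_n DE\lr{D_{2n}}$, giving $\prod_D\Q$ with no interchange question arising. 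Your per-subgroup fixed-point calculation is a useful sanity check on each factor, but making it the backbone of the proof forces you to commute a smash product past an infinite product, which is exactly what the global application of $\Phi^Z$ lets you sidestep. I would replace the second and third paragraphs of your proposal with this single adjunction step.
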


\begin{proof}
We have used Lemma \ref{lem:VZisotropy}
$$[X, \siftyV{Z}\sm Y]^G=[\Phi^Z X, \Phi^Z Y]^{G/Z}, $$ 
from which the homotopy is immediate. 
\end{proof}

For $\up \Tt$ we proceed differently, since $\Tt$ is not normal. In
fact we take the ring spectrum $DE[Z\not \subseteq ]_+=DS(\infty
V(Z))_+$ with coisotropy consisting of subgroups not containing $Z$
and then localize to cut out those not containing $\Tt$. Finally, this
is interpreted in the full localization, so that non-full subgroups do
not contribute. 

\begin{lemma}
For the set $\up \Tt$, we may take the commutative ring 
$$\mcR(\up \Tt)=\ebar_G DS(\infty V(Z))_+$$
and 
$$\piG_*(\mcR (\up \Tt))=\prod_{H\supseteq \Tt, H\not = G}\Q [c]. $$
\end{lemma}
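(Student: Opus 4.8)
The plan is to mimic the proofs of the two previous lemmas (for $\mcR(\up G)$ and $\mcR(\up Z)$), adapting the isotropy bookkeeping to the fact that $\Tt$ is not normal. First I would record the support computation: by construction $DS(\infty V(Z))_+$ has geometric isotropy (coisotropy) consisting of the subgroups $H$ with $Z\not\subseteq H$, and smashing with $\ebar_GS^0$ intersects this with $\up(\Tt)$ by Lemma \ref{lem:VGisotropy}, so the support of $\mcR(\up\Tt)=\ebar_G DS(\infty V(Z))_+$ is $\up(\Tt)\setminus\up(Z)=\{H\st \Tt\subseteq H,\ H\neq G\}$. (One should note here that $G$ itself is excluded precisely because $Z\subseteq G$; this is the reason the product in the statement runs over $H\neq G$.) By the Thomason filtration lemma, $\up(\Tt)\setminus\up(G)=\Th_1^-\cup\{\text{finite subgroups of }\Tt\}$, and smashing with $DS(\infty V(Z))_+$ further removes the finite subgroups of $Z$-type, leaving exactly the groups $H^{s/ns}(m,\infty)$ together with their cotoral (finite) subgroups; but for the homotopy-group statement only the idempotent splitting by connected-isotropy-type matters.

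The second step is the $\piG_*$ computation. The crucial input is that $DS(\infty V(Z))_+ = D(\ef)$ for the family $\cF=[Z\not\subseteq]$ of subgroups not containing $Z$, together with the standard formula for maps into a function spectrum on a universal space: for a family $\cF$, $[X, DE\cF_+]^G = [E\cF_+\sm X, S^0]^G$, which after splitting by the idempotents $e_K$ indexed by subgroups $K\in\cF$ with finite Weyl group decomposes as a product over such $K$ of $[E\lr{K}\sm X, S^0]^G$. Smashing with $\ebar_G S^0$ restricts the index set to those $K$ additionally containing $\Tt$, i.e.\ to $K=H^{s/ns}(m,\infty)$, and the corresponding summand contributes $\pi^G_*(D E\lr{H}) = \pi^G_*(F(E\lr{H}_+,S^0))$. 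Each such summand I would identify with $H^*(BW_G^e(H))$ by the usual tom Dieck / geometric fixed-point argument: $W_G^e(H)$ is a one-dimensional group (a circle, since $H$ is a one-dimensional full subgroup with $W_G(H)$ infinite), so $H^*(BW_G^e(H);\Q)=\Q[c]$ with $c$ in degree $2$. Assembling, $\piG_*(\mcR(\up\Tt))=\prod_{H\supseteq\Tt,\ H\neq G}\Q[c]$.

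The main obstacle is the non-normality of $\Tt$: unlike the $\up Z$ case, there is no clean ``$\Phi^{\Tt}$ then work in $G/\Tt$'' reduction, so the argument has to be run entirely with $G$-equivariant idempotent splittings and the function-spectrum adjunction rather than with genuine fixed points. Concretely, the care points are (a) checking that $\ebar_G$ (defined as a colimit of Burnside-ring idempotents $e_U$ over open-closed $U\supseteq\{\Tt$-containing subgroups$\}$) interacts with the $\cF$-splitting of $DE\cF_+$ so that the resulting index set is exactly $\{H^{s/ns}(m,\infty)\}$ — this uses the classification in \cite{t2wqalg} that these are h-isolated among full subgroups containing $\Tt$ — and (b) that $W_G^e(H^{s/ns}(m,\infty))$ really is a circle, so the per-summand homotopy ring is $\Q[c]$ and not something larger; both of these are ``observed directly from the calculations in Sections 9 and 11 of \cite{t2wqalg}'' in the same style as the Thomason filtration lemma. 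I would close by remarking that the equivalence with a recognizable couniversal/universal space model (paralleling $\mcR(\up G)\simeq\etp$) is immediate from the isotropy lemmas, so the homotopy computation above is the only real content.
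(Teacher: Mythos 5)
Your proof takes a genuinely different route from the paper, and it also has a technical slip worth flagging.

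The paper's proof is a one-liner built on a trick you considered and rejected at the wrong junction. You correctly observe that $\Tt$ is not normal, so one cannot run a clean ``$\Phi^{\Tt}$, then work in $G/\Tt$'' argument. But the paper does not try to descend along $\Tt$ — it descends along $Z$. Since $V(Z)^Z=0$, the space $S(\infty V(Z))$ has free $Z$-action, so
$$\piG_*(\mcR(\up\Tt))\ =\ [S(\infty V(Z))_+, S^0]^G_*\ =\ [(S(\infty V(Z))/Z)_+, S^0]^{G/Z}_*\ =\ H^*_{O(2)}((S(\infty V(Z))/Z)_+),$$
and the whole computation lands in $O(2)=G/Z$-equivariant cohomology, where the product of $\Q[c]$'s over the (images of the) one-dimensional subgroups containing $\Tt$ is the familiar $O(2)$-story. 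No $G$-level idempotent bookkeeping is needed. Your approach, staying entirely in $G$-spectra and splitting $DE\cF_+$ by idempotents, is heavier and — more importantly — the splitting you invoke is misstated: you index the splitting by ``subgroups $K\in\cF$ with finite Weyl group,'' but for $\cF=[Z\not\subseteq]$ there are no such subgroups. By the Thomason filtration lemma, the subgroups with finite Weyl group are exactly those containing $Z$, which is the complementary cofamily. The groups $H^{s/ns}(m,\infty)$ you want to index over have $W_G(H)$ one-dimensional (Lemma \ref{lem:HAsnormalizer}(i)), and the finite subgroups in $\cF$ also have one-dimensional Weyl groups. So the idempotent decomposition you describe has the wrong (empty) index set as written; what you actually need is the decomposition by h-isolated (equivalently, cotorally maximal within $\cF\cap\up\Tt$) subgroups, which is exactly the kind of bookkeeping the $Z$-descent trick avoids. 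The final per-summand identification $H^*(BW_G^e(H);\Q)=\Q[c]$ is correct, and the answer matches; but to make your route watertight you would have to replace ``finite Weyl group'' by the correct indexing criterion and then verify carefully that $\ebar_G$ kills precisely the finite factors. The paper's $Z$-descent sidesteps all of this.
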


\begin{proof}
  We calculate
$$  \piG_*(\mcR (\up \Tt)=[S(\infty Z)_+, S^0]^G_*=[(S(\infty
Z)/Z)_+,S^0]^{G/Z}=
H^*_{O(2)}((S(\infty Z)/Z)_+), $$
 from which the homotopy is immediate. 
\end{proof}

For $\up 1$ it is clear we wish to use all finite isotropy, but of
course just involving full finite subgroups. All finite subgroups have
a  Weyl group whose identity component is a circle. 

\begin{lemma}
For the set $\up 1$, we may  take the commutative ring 
$$\mcR(\up 1)=DE\cF_+$$
and 
$$\piG_*(\mcR (\up 1))=\prod_{F finite }\Q [c]. $$
\end{lemma}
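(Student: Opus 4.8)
The plan is to compute $\piG_*(DE\cF_+)$ directly, following the same template used in the preceding three lemmas but now with $\mcH = \cF$ the family of all (full) finite subgroups. First I would recall that for a universal space $E\cF_+$ with $\cF$ a family, one has the identification $\piG_*(DE\cF_+) = \piG_*(F(E\cF_+, S^0)) = [E\cF_+, S^0]^G_{-*}$, and that $E\cF_+$ is built as a homotopy colimit of $(G/F)_+$ over the orbit category restricted to $\cF$. Smashing and function-spectrum adjunctions turn the computation into an inverse limit of the Burnside-ring-module computations $[(G/F)_+, S^0]^G_* = \pi^F_*(S^0)$. Since we are working rationally and in the block of full subgroups, the only contributions come from full finite $F$, and for each such $F$ the Weyl group $W_G(F)$ has identity component a circle (as noted just before the statement), so $\pi^F_*(S^0)_{\Q}$ in the relevant idempotent piece is $H^*(BW_G^e(F)) = H^*(B\T) = \Q[c]$ with $c$ in degree $-2$.

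The key steps, in order: (1) rewrite $DE\cF_+$ using the standard cofibre-sequence/Adams-type decomposition or simply as $F(E\cF_+, S^0)$ and apply the adjunction to reduce to $[E\cF_+, S^0]^G$; (2) use the cellular filtration of $E\cF_+$ by full finite orbits to express $\piG_*(DE\cF_+)$ as the inverse limit over $\cF$ of the $\piG_*$ of the pieces, and observe the system splits after rationalization into a product indexed by conjugacy classes of full finite subgroups $F$ — this is where the idempotent splitting of $A(G)_{\Q}$ and the h-isolatedness of finite subgroups (Lemma on the Thomason filtration) does the work, exactly as in the proof of the $\up Z$ and $\up\Tt$ lemmas; (3) for a single full finite $F$, identify the corresponding summand $e_F DE\cF_+ \simeq DE\lr{F}_+ \simeq e_F S^0$ and compute $\piG_*(e_F S^0) = \pi^F_*(S^0)$ in the block, which by the tom Dieck splitting / the isotropy separation and the fact that $W_G^e(F)$ is a circle gives $H^*(BW_G^e(F);\Q) = \Q[c]$; (4) assemble to get $\piG_*(\mcR(\up 1)) = \prod_{F \text{ finite}} \Q[c]$.

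The main obstacle I anticipate is step (2)/(3): being careful that the product decomposition is genuinely a product (not merely an inverse limit with nontrivial $\lim^1$ or extension problems) and that each factor is exactly $\Q[c]$ rather than something with extra nilpotents or a shift. Rationally $\lim^1$ issues vanish for these towers of finite-type pieces, and the idempotents $e_F$ in $A(G)_{\Q}$ (available because finite full subgroups are h-isolated, per the Thomason-filtration lemma) split the tower into a product; the identification $e_F S^0 \simeq E\lr{F}_+$-localized-thing with $\piG_*$ equal to $H^*(BW_G(F))$ is the rational version of the tom Dieck splitting, and the computation $H^*(BW_G^e(F)) = \Q[c]$ uses only that the identity component of the Weyl group is a circle. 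So the proof is short: cite the isotropy/idempotent formalism already set up, reduce to one finite subgroup at a time, and quote the circle computation.

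\begin{proof}
As in the previous lemmas we compute $\piG_*(DE\cF_+) = [E\cF_+, S^0]^G_{-*}$. Rationally the idempotents of $A(G)$ supported at the (h-isolated, by the Thomason-filtration lemma) full finite subgroups $F$ split $DE\cF_+$ as a product over conjugacy classes of full finite subgroups, with $F$-summand $\simeq e_F S^0 \simeq DE\lr{F}_+$. Hence $\piG_*(DE\cF_+) = \prod_{F \text{ finite}} \piG_*(e_F S^0) = \prod_{F} \pi^F_*(S^0)$ in the relevant block. For each full finite $F$ the identity component of $W_G(F)$ is a circle, so by the rational tom Dieck splitting $\pi^F_*(S^0)$ in this block is $H^*(BW_G^e(F);\Q) = H^*(B\T;\Q) = \Q[c]$. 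Assembling gives $\piG_*(\mcR(\up 1)) = \prod_{F \text{ finite}} \Q[c]$, as claimed.
\end{proof}
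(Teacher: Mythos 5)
There is a genuine gap, and it lies in the two places you flagged as potential obstacles. The idempotents of $A(G)_\Q$ are characteristic functions on $\Phi G$, the space of conjugacy classes of subgroups with \emph{finite} Weyl group. But the finite full subgroups $F$ in this block have Weyl groups with identity component a circle (the paper records this just before the lemma), so $F\notin\Phi G$ and there is no idempotent $e_F\in A(G)_\Q$ supported at $F$. The same observation undermines the appeal to the tom~Dieck splitting: $\pi^G_*(S^0)_\Q$ splits as a product of $H^*(BW_G(H);\Q)$ over $(H)\in\Phi G$, which has no $F$-summand. So $\pi^G_*(DE\lr F)$ is not a tom~Dieck summand of $\pi^G_*(S^0)$; it is a completion-type invariant and must be computed by a different mechanism. (The expression $\pi^F_*(S^0)$ in the write-up also does not denote the intended thing; what is wanted is $\pi^G_*$ of a function spectrum, not $F$-equivariant homotopy.)

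The paper's proof goes factor by factor and uses the correct mechanism: the chain of adjunctions
\[
\pi^G_*(DE\lr F)=[E\lr F,S^0]^G_*=[E\lr F,S^0]^{N_G(F)}_*=[E\lr F,S^0]^{N_G(F)/F}_*=[BW_G(F),S^0],
\]
first restricting to the normalizer (legitimate since $E\lr F$ is supported at $(F)$), then passing to $N_G(F)/F=W_G(F)$ (legitimate since $F$ acts trivially on $E\lr F$ with that coisotropy), and finally recognising that over $W_G(F)$ the space $E\lr F$ is a free universal space so the fixed-point computation is nonequivariant cohomology of $BW_G(F)$. Only then does one use that $W_G^e(F)$ is a circle to read off $\Q[c]$. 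You reach the right answer $H^*(BW_G^e(F);\Q)=\Q[c]$, but via a splitting and a tom~Dieck identification that do not actually apply to these $F$. The fix is to replace steps (2)--(3) by the normalizer/Weyl-group reduction above; the product decomposition of $DE\cF_+$ itself is fine, but it comes from idempotents in $\pi^G_0(DE\cF_+)$ (equivalently, from the discreteness of the family $\cF$), not from $A(G)$.
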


\begin{proof} For each factor, we may calculate
  $$\piG_*(DE\lr{F})=[E\lr{F}, S^0]^G_*=[E\lr{F}, S^0]^{N_G(F)}_*
  =[E\lr{F}, S^0]^{N_G(F)/F}_*
  =[BW_G(F), S^0]$$
from which the homotopy is immediate. 
\end{proof}

\section{Decomposing the sphere}
\label{sec:decompS}
In the previous section we introduced a number of isotropically simple ring
spectra. We now show that they can be used to reconstruct the sphere
in an adelic fashion, and that the homotopical localizations
correspond to algebraic localizations in homotopy groups.
\subsection{The pullback cube}
The appropriate cube here is a massively simplified version of the one
for the 2-torus. The simplification is that there are only two
subgroups of codimension 1 instead of infinitely many.

\begin{prop}
The cube 

  \begin{equation*}
  \resizebox{\displaywidth}{!}
  {\xymatrix{
&\mcR(\up Z)\times \mcR(\up \Tt)\ddto \rrto&&\siftyV{G}\sm\mcR(\up Z)\times
\siftyV{G}\sm \mcR(\up \Tt)\ddto \\
S^0\rrto \ddto \urto  &&\mcR (\up G)\ddto \urto&\\
&\siftyV{Z}\sm \mcR(\up 1)\times \ebar_G\mcR(\up 1)
\rrto&&\siftyV{G}\sm\mcR(\up 1)\times
\siftyV{G}\sm \mcR(\up 1)\\
\mcR (\up 1)\urto \rrto &&\siftyV{G}\sm \mcR(\up 1)\urto&
}}
\end{equation*}
is a homotopy pullback in the category of $G$-spectra over $\cV$.
\end{prop}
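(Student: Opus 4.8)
The plan is to verify the pullback property by working one vertex of $\Phi G$ at a time, using the isotropy calculations of Section~\ref{sec:isotropicrings} to reduce the whole cube to a collection of elementary statements about idempotent splittings and Euler-class localizations. First I would observe that the category of $G$-spectra over $\cV$ is generated by the cells $E\lr{K}$ for full subgroups $K$, so it suffices to check that applying $[E\lr{K}, -]^G_*$ to the cube yields a pullback square of abelian groups for every such $K$; equivalently, that the total homotopy fibre of the cube is trivial on each such cell. The cube has been arranged so that the three ``codimension $0,1,2$'' directions correspond exactly to the three idempotent/localization operations $\ebar_G(-)$, $\siftyV{Z}\sm(-)$ and their composite $\siftyV{G}\sm(-)=\siftyV{Z}\sm\ebar_G(-)$ (Lemma~\ref{lem:VGisotropy} and the Remark following it), so the relevant algebra is the standard fact that $S^0$ is the pullback of its localization away from an idempotent, its localization inverting a set of Euler classes, and the common further localization.

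The key steps, in order, are as follows. \emph{Step 1:} Reduce to cells $E\lr{K}$, $K$ full, as above. \emph{Step 2:} Split by height of $K$. If $K=G$ (the unique height-2 subgroup), all four ``$\siftyV{G}\sm$'' vertices and $\mcR(\up G)$ see $K$ while the others do not, and the square collapses to an identity; use Lemma~\ref{lem:VGisotropy}. \emph{Step 3:} If $K$ has height $1$, use the Thomason filtration lemma: $K$ lies in exactly one of $\Th_1^+=\up Z\setminus\up G$ or $\Th_1^-=\up\Tt\setminus\up G$. In the first case $K$ contributes to $\mcR(\up Z)$ and to $\siftyV{Z}\sm\mcR(\up 1)$ but to neither $\mcR(\up\Tt)$ nor $\ebar_G\mcR(\up 1)$, and the cube restricted to $K$ becomes the square expressing $e_K S^0$ as the pullback of $\siftyV{Z}\sm e_KS^0 \leftarrow \siftyV{G}\sm e_KS^0 \rightarrow \ebar_G$-localization; symmetrically in the second case. \emph{Step 4:} If $K$ has height $0$ it is a full finite subgroup; then $K$ sees $\mcR(\up 1)$, $\siftyV{Z}\sm\mcR(\up 1)$ and $\ebar_G\mcR(\up 1)$, and the defining property of these three localizations of $DE\cF_+$ — namely that the coisotropy-separated pullback of the two codimension-$1$ pieces over their common codimension-$2$ piece recovers the unlocalized spectrum on finite isotropy — gives the pullback. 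In each of Steps 2--4, the identification of $\piG_*$ of each vertex from the four ring lemmas of Section~\ref{sec:isotropicrings} shows the relevant maps of homotopy groups are the expected inclusions of (products of) localizations, so exactness is immediate.

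The main obstacle is Step~3, and specifically the codimension-$1$ direction at the height-$1$ subgroups: one must check that $\siftyV{Z}\sm\mcR(\up 1)\times\ebar_G\mcR(\up 1)$ together with $\mcR(\up Z)\times\mcR(\up\Tt)$ genuinely glue over $\siftyV{G}\sm(-)$ to give back $S^0$ and not merely $e_{\toral^c}S^0$ or some proper summand — i.e. that no full subgroup is ``lost'' between the finite-isotropy layer and the one-dimensional layer. This is exactly where the bookkeeping from \cite{t2wqalg} about which $H^\lambda(m,\infty)$ contains which $H^\lambda_1(m,n)$ and $H_2(m,n)$ (the incidence pattern drawn in the $\Lambda_0=\Z W$ picture, and the Type~2 subgroups appearing only in the split case) must be used to see that every height-$0$ and height-$1$ full subgroup is accounted for with the correct multiplicity, so that the cube is a pullback on the nose rather than up to an idempotent. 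Once that combinatorial check is in place, the remaining verifications are the routine localization arguments indicated above, and one concludes by the generation statement of Step~1 that the cube is a homotopy pullback in $G$-spectra over $\cV$.
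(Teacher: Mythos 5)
Your overall strategy --- check the pullback one conjugacy class of subgroups at a time, split by Thomason height, and reduce each case to a known one-dimensional Tate square (the circle for $\up Z$, the dihedral part of $O(2)$ for $\up\Tt$) --- is the same as the paper's, and that part is sound. But there are two places where your plan differs from, and is weaker than, what the paper actually does, and one of them is a genuine misconception worth flagging.

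First, the paper does not check the three-dimensional cube vertex by vertex via cells $E\lr{K}$. It observes that one of the three directions in the cube is literally $\siftyV{G}\sm(-)$, and that the fibre of $S^0\to\siftyV{G}$ is $E\cP_+$. Taking fibres in that direction immediately collapses the cube to a single square smashed with $E\cP_+$, so everything in sight is supported on proper subgroups and one only has to show that the square
$$\xymatrix{S^0\ar[r]\ar[d]&\mcR(\up Z)\times\mcR(\up\Tt)\ar[d]\\ \mcR(\up 1)\ar[r]&\siftyV{Z}\sm\mcR(\up 1)\times\ebar_G\mcR(\up 1)}$$
is an $H$-equivariant pullback for each \emph{proper} $H$. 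This sidesteps your Step~2 entirely. That matters, because your Step~2 as stated is not quite right: $\mcR(\up 1)=D\efp$ is cofree, and its geometric fixed points at $G$ need not vanish (for a torus, $\Phi^{\T}D\efp$ is a nontrivial Tate construction), so the claim that only the $\siftyV{G}\sm(-)$ vertices and $\mcR(\up G)$ ``see'' $K=G$ is false. The cube is still a pullback at $G$, but for a different reason (the $\siftyV{G}$-direction becomes an equivalence there), and the paper's $E\cP_+$ reduction makes the point moot. You would also want to be careful about whether $\{E\lr{K}\}$ is literally a set of generators; the paper instead tests by restriction to subgroups, which is the standard and unproblematic test for spectra supported on a family.

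Second, the ``main obstacle'' you identify in your final paragraph --- the combinatorics of which $H^{\lambda}(m,\infty)$ contains which $H^{\lambda}_1(m,n)$ and $H_2(m,n)$, and the Type~2 pattern --- is a red herring for this proposition. Once one has smashed with $E\cP_+$ and is checking restriction to a single proper subgroup $H$, the only group-theoretic input used is (a) every proper full subgroup is finite or one-dimensional, and (b) every one-dimensional proper full subgroup contains exactly one of $Z$ and $\Tt$. Both are already in the Thomason filtration lemma. No multiplicity bookkeeping and no incidence pattern from \cite{t2wqalg} enters the argument; those data are needed elsewhere in the paper (for generators, formality and the cellular skeleton theorem), not here. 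So your proposal overcomplicates exactly the step that is easy and glosses the place (the $K=G$ case) where the naive cell-by-cell check requires care.
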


\begin{proof}
It suffices to take the fibre of the map from the left to right and
show the resulting square is a pullback. In view of the fact that the
fibre of $S^0\lra \siftyV{G}$ is $E\cP_+$, it suffices to show the
square from the right hand end
$$\xymatrix{
S^0\rto \dto& \mcR (\up Z)\times \mcR(\up \Tt)\dto\\
\mcR (\up 1)\rto &\siftyV{Z} \sm \mcR (\up 1)\times \ebar_G \mcR(\up 1) 
}$$
is an $H$ equivariant pullback square for each proper subgroup $H$ of
$G$. If $H$ is finite, the two right hand rings are contractible, and
the left hand one is the $\cF$-equivalence $S^0\lra D\efp$. If $H$ is
one dimensional it contains either $Z$ or $\Tt$, but not both. If $H$
contains $Z$ we have
$$\xymatrix{
S^0\rto \dto& \mcR (\up Z)\dto\\
\mcR (\up 1)\rto &\siftyV{Z} \sm \mcR (\up 1)
}$$
which is the well known pullback square for the circle group. 

If $H$ contains $\Tt$, we have 
$$\xymatrix{
  S^0\rto \dto& \mcR(\up \Tt)\dto\\
\mcR (\up 1)\rto &\siftyV{Z} \sm \mcR (\up 1)\times \ebar_G \mcR(\up 1) 
}$$
which is the well known pullback square for the dihedral part of $O(2)$. 
\end{proof}

\subsection{Localized diagrams}
The connection to the algebraic model is given by the coefficients
together with the effect on modules of various
localizations. 

\begin{lemma}
\label{lem:qc}
(i) For modules $N$ over $\mcR (\up 1)=D\efp$, we have 
$$\piG_*(\siftyV{Z}\sm N)=\cEi \piG_*(N) \mbox{ and }
\piG_*(\ebar_G N)=\cIi \piG_*(N)$$

(ii) For modules $N$ over $\mcR (\up Z)=\siftyV{Z}\sm DE\cF/Z_+$, we have 
$$\piG_*(\ebar_G M)=\cIi \piG_*(M) $$

(iii) For modules $P$ over $\mcR (\up \Tt )=\ebar_GDS(\infty V(Z))_+$, we have 
$$\piG_*(\siftyV{Z}\sm P)=\cEi \piG_*(P). $$
\end{lemma}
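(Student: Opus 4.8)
The plan is to show each of the three localization statements by reducing to the same underlying mechanism: a homotopical localization of a module, obtained by smashing with $\siftyV{Z}$ (resp.\ applying $\ebar_G$), is computed on $\piG_*$ by the corresponding algebraic localization, because the relevant Euler classes (resp.\ idempotents) act geometrically as the same maps that effect these localizations. Concretely, $\siftyV{Z} = \colim_{V^Z=0} S^V$ is a filtered colimit over multiplication-by-Euler-class maps, and $\ebar_G = \colim_{U\supseteq \up(\Tt)} e_U$ is a filtered colimit over the idempotents in the Burnside ring, so in both cases $\piG_*$ commutes with the colimit and the statement becomes a bookkeeping identity about which Euler classes/idempotents one is inverting.

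For part (i), the key input is the identification of $\piG_*(D\efp) = \prod_{F \text{ finite}} \Q[c]$ from the last lemma of Section~\ref{sec:isotropicrings}, together with the identification of the multiplicatively closed sets: $\cE$ is the set of Euler classes $c^v$ with $v\colon \cF\to\N$ finitely supported, and $\cI$ is the set of characteristic functions of cofinite subsets of $\cF$. Smashing a $D\efp$-module $N$ with $\siftyV{Z}$ inverts exactly the Euler classes $e(V)$ with $V^Z=0$; under the isomorphism $\piG_*(D\efp)\cong\prod_F\Q[c]$ these act on the factor indexed by $F$ as the polynomial generator $c$ whenever $Z\not\subseteq F$ (i.e.\ for the essential representations of $G/F$), which is precisely multiplication by a generating set for $\cE$. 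Hence $\piG_*(\siftyV{Z}\sm N) = \cEi\piG_*(N)$. Dually, applying $\ebar_G$ is smashing with $\etp = \supV{\Tt}$, whose support is $\up(\Tt)$ by Lemma~\ref{lem:VZisotropy}; on $\prod_F\Q[c]$ this is multiplication by the idempotent supported on subgroups containing $\Tt$ — but among finite full subgroups, being "$\leq$ something containing $\Tt$" is a cofinite condition controlled by the neighbourhood system, so this colimit of idempotents inverts precisely the cofinite characteristic functions in $\cI$. Thus $\piG_*(\ebar_G N) = \cIi\piG_*(N)$.

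Parts (ii) and (iii) are then essentially the same argument transported along the fixed-point equivalences already established. For (ii), a module $M$ over $\mcR(\up Z) = \siftyV{Z}\sm DE\cF/Z_+$ has $\piG_* M$ already $\cE$-local (the $\siftyV{Z}$ is built in), and applying $\ebar_G$ again inverts the idempotents supported on $\up(\Tt)$; using the identification $[X,\siftyV{Z}\sm Y]^G = [\Phi^Z X, \Phi^Z Y]^{G/Z}$ from the proof of the $\mcR(\up Z)$-lemma, this becomes the statement over $O(2) = G/Z$ that inverting the relevant idempotents is $\cIi$ on homotopy, which is again the cofinite-characteristic-function localization. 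For (iii), a module $P$ over $\mcR(\up\Tt) = \ebar_G DS(\infty V(Z))_+$ is already $\ebar_G$-local, and smashing with $\siftyV{Z}$ inverts the $Z$-essential Euler classes; on $\piG_* P = \prod_{H\supseteq\Tt,\,H\neq G}\Q[c]$ these act as the polynomial generator $c$ on each factor, which is exactly $\cE$, giving $\piG_*(\siftyV{Z}\sm P) = \cEi\piG_*(P)$.

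The main obstacle I expect is not any single homotopy-theoretic step but the matching of indexing: verifying precisely that the set of Euler classes geometrically inverted by $\siftyV{Z}$ corresponds, under the coefficient identifications, to a generating set for the algebraically defined $\cE$ (and likewise $\ebar_G \leftrightarrow \cI$), taking proper care that we are working in the full localization so non-full subgroups are already absent and do not perturb the index sets. Once the index sets are pinned down, commuting $\piG_*$ past the filtered colimits defining $\siftyV{Z}$ and $\ebar_G$ is routine, and the three parts follow uniformly. I would present (i) in full and then remark that (ii) and (iii) are the same computation after applying the fixed-point equivalences, so the whole proof is short.
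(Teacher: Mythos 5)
Your proposal follows the same mechanism as the paper's proof, which is very terse (three sentences): $\siftyV{Z}$ is a filtered colimit of maps $S^0\to S^V$ realizing multiplication by Euler classes, and $\ebar_G$ is a filtered colimit of idempotents, so $\piG_*$ commutes past the colimits and the statement reduces to identifying which Euler classes (resp.\ idempotents) are inverted. You expand the bookkeeping, which is fine, and you correctly observe that (ii) and (iii) are the same argument transported. One slip to fix: you write $\ebar_G$ as smashing with ``$\etp = \supV{\Tt}$'', but $\etp = \tilde{E}\cP$ has support $\{G\}$, whereas $\ebar_G S^0 = \supV{\Tt}$ has support $\up(\Tt)$; you mean the latter (the Remark after Lemma~\ref{lem:VGisotropy} records $\ebar_G S^0 = \supV{\Tt}$, with $\etp \simeq \siftyV{Z}\sm\ebar_G S^0 = \supV{G}$). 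Since you are actually applying $\ebar_G$ throughout, the content of your argument is unaffected, but the identification should be corrected. Also the parenthetical ``whenever $Z\not\subseteq F$'' is vacuous (all $F$ in $\cF$ are finite so never contain the circle $Z$), and the Euler class $e(V)$ acts as $c^{\dim_\C V^F}$, not literally as $c$, on the $F$-factor; neither of these affects correctness but tightening them would help.
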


\begin{proof}
  The arguments for all three parts are similar. We focus on Part (i).

  Quasicoherence for $N(\up 1)\lra N(\up Z)$ comes 
from the fact that each inclusion $S^0\lra S^V$ induces 
mulitiplication by $e(V)$ provided $V$ is complex orientable.

Quasicoherence for $N(\up 1)\lra N(\up \Tt)$ comes 
from the fact that $\supV{\Tt}$ is defined as a colimit of 
multiplication by idempotents.
\end{proof}

\begin{cor}
  
  \begin{multline*}
    \piG_*
\left( \begin{gathered}
\xymatrix{
&\ebar_G \siftyV{Z} \sm D\efp&\\
\siftyV{Z}\sm D\efp \urto&&\ebar_GD\efp\ulto\\
&D\efp\urto \ulto&
}
\end{gathered}
\right) 
=\\
\left( \begin{gathered}
\xymatrix{
&\cEi\cIi \cOcF&\\
\cEi \cOcF \urto&&\cIi \cOcF\ulto\\
&\cOcF\urto \ulto&
}
\end{gathered}
\right)
\end{multline*}

$$\piG_*
\left( \begin{gathered}
\xymatrix{
&\ebar_G \mcR(\up Z)&\\
\mcR(\up Z)\urto&&\ast \ulto\\
&\ast \urto \ulto&
}
\end{gathered}
\right) 
=
\left( \begin{gathered}
\xymatrix{
&\cIi \cOcFZ&\\
\cOcFZ \urto&&0\ulto\\
&0\urto \ulto&
}
\end{gathered}
\right)$$

$$\piG_*
\left( \begin{gathered}
\xymatrix{
& \siftyV{Z} \sm \mcR(\up \Tt)&\\
\ast \urto&&\mcR (\up \Tt)\ulto\\
&\ast\urto \ulto&
}
\end{gathered}
\right) 
=
\left( \begin{gathered}
\xymatrix{
&\cEi \cOcFTt&\\
0\urto&&\cOcFTt\ulto\\
&0\urto \ulto&
}
\end{gathered}
\right)$$

\end{cor}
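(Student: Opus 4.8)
The plan is to obtain all three diagrams entrywise from the coefficient computations of Section~\ref{sec:isotropicrings} together with Lemma~\ref{lem:qc}, using that $\piG_*$ of a diagram of $G$-spectra is simply the diagram of homotopy groups. Since the two operations $\siftyV{Z}\sm(-)$ and $\ebar_G(-)$ are smashing localizations, each induces on $\piG_*$ the corresponding algebraic localization \emph{together with} its unit map, so the structure maps of the homotopy diagrams will automatically be the canonical localization maps matching those on the algebraic side, and no separate bookkeeping of maps is needed beyond naming them.

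For the first (main) diagram I would work outward from the bottom corner. The coefficient lemma for $\mcR(\up 1)$ gives $\piG_*(D\efp)=\prod_{F\ \mathrm{finite}}\Q[c]=\cOcF$, under the indexing of finite full subgroups by pairs $(m,n)$ with $m,n<\infty$ already fixed in Section~\ref{sec:abmixed}, and using that each such $F$ has Weyl group with circle identity component so that its factor is $\Q[c]$. Applying Lemma~\ref{lem:qc}(i) to the $\mcR(\up 1)$-module $D\efp$ produces the two side corners $\piG_*(\siftyV{Z}\sm D\efp)=\cEi\cOcF$ and $\piG_*(\ebar_G D\efp)=\cIi\cOcF$. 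Since $\siftyV{Z}\sm D\efp$ is again a module over $\mcR(\up 1)=D\efp$ by restriction along the unit, Lemma~\ref{lem:qc}(i) applies a second time to give $\piG_*(\ebar_G\siftyV{Z}\sm D\efp)=\cIi(\cEi\cOcF)$; as inverting $\cE$ and then $\cI$ is inverting the joint multiplicative set, this equals $\cEi\cIi\cOcF$, and the two maps into the top corner are the evident localization maps because $\siftyV{Z}\sm(-)$ and $\ebar_G(-)$ commute up to canonical equivalence.

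The second and third diagrams are immediate in the same way: the coefficient lemmas give $\piG_*(\mcR(\up Z))=\cOcFZ$ and $\piG_*(\mcR(\up\Tt))=\cOcFTt$, Lemma~\ref{lem:qc}(ii) applied to $\mcR(\up Z)$ gives $\piG_*(\ebar_G\mcR(\up Z))=\cIi\cOcFZ$, Lemma~\ref{lem:qc}(iii) applied to $\mcR(\up\Tt)$ gives $\piG_*(\siftyV{Z}\sm\mcR(\up\Tt))=\cEi\cOcFTt$, and the entries $\ast$ have vanishing $\piG_*$. The only step calling for any care — and the mild obstacle — is the double localization in the first diagram: one must check that Lemma~\ref{lem:qc}(i) genuinely applies to the already-localized module $\siftyV{Z}\sm D\efp$ and that the resulting identification is compatible with both incoming localization maps. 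Both points follow from the smashing (hence commuting) nature of the two localizations, exactly as in the proof of Lemma~\ref{lem:qc}.
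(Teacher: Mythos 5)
Your argument is exactly the intended one: the paper gives no separate proof of this corollary, leaving it as an immediate consequence of the four coefficient lemmas in Section~\ref{sec:isotropicrings} together with Lemma~\ref{lem:qc}, which is precisely what you have spelled out. Your observation about the double localization at the top corner (that $\siftyV{Z}\sm D\efp$ is still a $D\efp$-module, so Lemma~\ref{lem:qc}(i) may be applied a second time) is the right way to handle the only point that is not literally a direct quotation of those lemmas.
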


\subsection{The coinduced diagram}
Finally, we need to identify the coinduced diagram (for extended
discussion of the coinduction process, see \cite{gq1}). 
We need to describe the diagram
$$\xymatrix{
&\mcRt (\up G)\drto \dlto&\\
\mcRt(\up Z)\drto &&\mcRt(\up \Tt)\dlto\\
&\mcRt(\up 1)&
}$$

This is designed to account for groups not being normal and for the
Weyl group not being connected. To deal with the component structure
where the discrete quotient $W_G^d(K)$ of $W_G(K)$ acts, we let $N_G^f(K)$ denote the
inverse image of $W_G^e(K)=N_G(K)/K$ in $N_G(K)$, so that
$N_G(K)/N_G^f(K)=W_G^d(K)$.  Thus, at a subgroup $K$, the ring $\mcR(K)$ gets
replaced by the ring $\mcRt (K)=F_{N^f_G(K)}(G_+,\mcR(K) )$ so that
$$\mcRt(K)^G=\mcR(K)^{N^f_G(K)}. $$

For full normal
subroups $N_G^f(G)=G$, and hence $\mcRt (K)=\mcR(K)$, and this applies
to all subgroups in $\up \Tt$, so that 
$$\mcRt(\up G)=\mcR (\up G)=\ebar_G \siftyV{Z} \mbox{ and }
\mcRt(\up \Tt)=\mcR (\up \Tt)=\ebar_G D(S(\infty V(Z))_+).$$

On the other hand, for any finite subgroup $F=H^{\lambda}(m,n)$ has
normalizer $N_G(H^{\lambda}(m,n))=H(\infty , 2n)$ with
$N_G^f(H^{\lambda}(m,n))=H(\infty , n)$ so that 
$$\mcR (\up 1)=\prod DE\lr{F} \mbox{ and }\mcRt (\up 1)=\prod 
F_{N^f_G(F)}(G_+, DE\lr{F}). $$

Similarly, any subgroup containing $Z$ is of the form $H(\infty, n)$
with $N_G(K)=N_G^f(K)=H(\infty ,2n)$ and 
$$\mcR (\up Z)=\siftyV{Z}\sm DE\cF/Z_+=\siftyV{Z} \sm \prod
DE\lr{K} \mbox{ and }  \mcRt (\up Z)=\siftyV{Z} \sm \prod 
F_{K}(G_+, DE\lr{K}). $$

\begin{lemma}
For $K=1, Z, \Tt, G$ we have  $\piG_*(\mcRt(\up K))=\piG_*(\mcR (\up K))$. 
\end{lemma}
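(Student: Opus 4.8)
The plan is to remove the coinduction using the restriction--coinduction adjunction and then to quote the homotopy computations of Section~\ref{sec:isotropicrings} essentially verbatim. The cases $K=\Tt$ and $K=G$ are immediate: every subgroup appearing in $\up\Tt$, as well as $G$ itself, is full and normal, so $N^f_G(-)=G$, the coinduction functor is the identity, and $\mcRt(\up K)=\mcR(\up K)$ already; there is nothing to check.

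For $K=1$ and $K=Z$ the two rings are products over the relevant subgroups $K'$ --- the finite $F=H^\lambda(m,n)$ for $\up 1$, the groups $H(\infty,n)\supseteq Z$ for $\up Z$ --- with the $K'$-factor of $\mcRt(\up K)$ obtained from that of $\mcR(\up K)$ by applying $F_{N^f_G(K')}(G_+,-)$ (and, for $\up Z$, retaining the outer smash with $\siftyV{Z}$). The unit of the adjunction $\res^G_{N^f_G(K')}\dashv F_{N^f_G(K')}(G_+,-)$ supplies a natural map $\mcR(\up K)\lra\mcRt(\up K)$, and since coinduction is right adjoint to restriction, evaluation on spheres identifies $\piG_*$ of the $K'$-factor of $\mcRt(\up K)$ with $\pi^{N^f_G(K')}_*$ of the $K'$-factor of $\mcR(\up K)$, the comparison map being the restriction homomorphism $\piG_*\to\pi^{N^f_G(K')}_*$. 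In the $\up Z$ case I would first pass this through the isotropy identity $[X,\siftyV{Z}\sm Y]^G=[\Phi^Z X,\Phi^Z Y]^{G/Z}$ used in Section~\ref{sec:isotropicrings}, together with the fact that $\Phi^Z$ sends a $G$-coinduction from a subgroup containing $Z$ to the corresponding $G/Z$-coinduction, so that the whole comparison is carried out in $G/Z=O(2)$.

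It then remains to recompute $\pi^{N^f_G(K')}_*$ of each factor $DE\lr{K'}$ (or $\siftyV{Z}\sm DE\lr{K'}$) along the exact lines of the lemmas of Section~\ref{sec:isotropicrings}: since $E\lr{K'}$ is isotropically concentrated at $(K')$ and $K'$ is normal in $N^f_G(K')$, passing to $K'$-fixed points identifies this factor with $[B(N^f_G(K')/K'),S^0]_*=[B\,W^e_G(K'),S^0]_*$, whereas the same factor of $\mcR(\up K)$ was computed there as $\piG_*=[B\,W_G(K'),S^0]_*$, the comparison map being restriction along $W^e_G(K')\hookrightarrow W_G(K')$. This is an isomorphism because, by the structure recalled from \cite{t2wqalg}, the identity component $W^e_G(K')$ of each relevant Weyl group is a subquotient of the central circle $Z$, hence central in $W_G(K')$; thus $W_G(K')$ is a central extension of the finite group $W^d_G(K')$ by a circle (or is itself finite), and rationally $H^*(B\Gamma;\Q)=H^*(B\Gamma^e;\Q)^{\pi_0\Gamma}=H^*(B\,W^e_G(K');\Q)$ since the finite component group acts trivially on the cohomology of the classifying space of a central torus. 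Collecting the four cases gives $\piG_*(\mcRt(\up K))=\piG_*(\mcR(\up K))$.

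I expect the main obstacle to be the bookkeeping in the $\up Z$ case --- commuting $\Phi^Z$ past both the coinduction and the smash with $\siftyV{Z}$, which is a projection-formula argument --- together with making the group-theoretic input precise, namely that $W^e_G(K')$ really is central in $W_G(K')$ for every $K'$ that occurs. Everything else is a direct transcription of homotopy calculations already made.
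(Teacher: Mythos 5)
Your proof is correct and follows the same route as the paper's: dispose of $K=\Tt,G$ by observing $N^f_G(-)=G$ there, then for $K=1,Z$ use restriction--coinduction to reduce the comparison to $\piG_*\to\pi^{N^f}_*$ on each factor, and show this is an isomorphism because the residual $W^d_G(K')$-action is trivial. The paper's proof is terser --- it computes $\piG_*(DE\lr{F})=([E\lr{F},S^0]^{N^f})^{W^d}$ and simply asserts ``the action of $W^d_G$ is trivial in each case'' --- whereas you supply the reason: $W^e_G(K')$ is a quotient of the central circle $Z$, hence central in $W_G(K')$, so conjugation acts trivially on $H^*(BW^e_G(K');\Q)$. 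That is exactly the fact the paper relies on and leaves implicit, so your write-up is a more complete version of the same argument rather than a different one. Two minor remarks: for $\up Z$ the relevant subgroups $H(\infty,n)$ have $N_G=N^f_G$, so $W^d$ is already trivial and the centrality argument is not even needed there; and the $\Phi^Z$/projection-formula bookkeeping you flag as the likely obstacle is genuinely the only place requiring care, but it is routine and the paper (reasonably) waves it through with ``the other arguments are similar.''
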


\begin{proof}
  For definiteness, we take $K=1$, and calculate one of the terms. The
  other arguments are similar. Writing $N=N_G(F), N^f=N_G(F),
  W^d=W_G^d(F)$, we have 
  $$\piG_*(DE\lr{F}) =[E\lr{F}, S^0]^G\stackrel{\cong}\lra
  [E\lr{F}, S^0]^N =([E\lr{F}, S^0]^{N^f})^{W^d}, $$
  but the action of $W_G^d$ is trivial in each case. 
    \end{proof}

\section{Generators and formality}
\label{sec:genform}
We have established the link between topology and algebra at the
level of ring spectra and their homotopy groups. The resulting
diagrams are shown to be formal in Subsection \ref{subsec:formal}
below. 

We also need to identify
the realisable modules in the two contexts and show they correspond,
which we do by identifying generators. First we consider cells
in Subsection \ref{subsec:gens} and then dual cells in Subsection \ref{subsec:dualcells}.

\subsection{Generators}
\label{subsec:gens}
With the appropriate ring spectra in place, generators are  easily verified.
In each case we have an adjunction relating $R$-module $G$-spectra
with modules over a product of polynomial rings each with an action of
a finite group. It remains only to identify the image of a set of
generators. It is convenient to write $\sigma_K=G_+\sm_K e_KS^0$ when 
$K$ is isolated in $\Phi K$.

To prepare expectations we remind the reader of some easier examples

\begin{example}
(i) If $G$ is connected and $R=DEG_+$
then $R$ generates the category of $R$-module $G$-spectra.

(ii) If $G=O(2)$
and $K=D_{2n}$ and $R=DE\lr{K}$ then the category of $R$-module
$G$-spectra is generated by $R\sm G/K_+$ or by $R\sm
\sigma_K$.

(iii) Suppose $G=O(2)$ and $R=eS^0$ where $e $ is the dihedral
idempotent.  The category of $R$-module $G$-spectra is generated by $eG/D_{2n})_+$
for all $n$ together with $eS^0$ or equally by $\sigma_{2n}$ together
with $\ebar S^0$.
\end{example}

Prepared with these examples, the case of direct concern is
straightforward. 

\begin{prop}
  \label{prop:Rmodgen}
  
(i) For $R=\mcR(\up G) =\etp$, the category of $R$-module $G$ spectra is generated by
$R$.

(ii)  For $R=\mcR (\up H(\infty ,n))=\siftyV{Z}\sm DE\lr{n}$, the 
category of $R$-module $G$ spectra is generated by $R$ and $R\sm 
G/H(\infty, n)_+$.  For $R=\mcR (\up Z)=\siftyV{Z}\sm D\efp$,   the 
category of $R$-module $G$ spectra is generated 
by $R$ together with $R\sm \sigma^G_{H(\infty,n)}=R\sm \sigma^{O(2)}_{D_{2n}}$ for all $n$.

(iii)  For $R=\mcR (\up \Tt)$  the category 
of $R$-module $G$ spectra is generated by $R$.

(iv)  For $R=\mcR (\up F)=DE\lr{F}$  the category of $R$-module $G$ spectra is 
generated by $R\sm \sigma_F $. For $R=\mcR (\up 1)=\prod_F DE\lr{F}$
the category of $R$-module $G$ spectra is
generated by $R\sm \sigma_F $ together with $R\sm
\overline{\sigma}^{s/ns}_{m,\infty}$ or by $R\sm
\sigma^{s/ns}_{(1,n)}, R\sm \overline{\sigma}_{(1,\infty)}$.
\end{prop}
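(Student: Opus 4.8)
The plan is to prove each of the four parts of Proposition~\ref{prop:Rmodgen} by the same mechanism: in each localized context the ring spectrum $R$ together with the listed module $G$-spectra are dualizable cells whose Adams--type cell structure exhausts the category, because the target module category (a product of polynomial rings with finite group actions) is generated by the corresponding free modules, and the adjunction of Equivalence~2--3 reflects generation. Concretely, for a set of objects $\{X_i\}$ in a localizing subcategory to generate, it suffices that an $R$-module $M$ with $[X_i,M]^G_*=0$ for all $i$ is trivial; since we have the equivalences of the strategy section converting $\pi^G_*$ of $R$-module $G$-spectra into modules over $\pi^G_*R$ (with the relevant $W^d$-action), and since $\pi^G_*(R\sm X_i)$ are exactly the projective generators of that module category, vanishing of all $[X_i,M]^G_*$ forces $\pi^G_*M=0$, hence $M\simeq 0$.

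First I would dispatch part (i): $\mcR(\up G)=\etp$ has $\pi^G_*=\Q$ concentrated at $G$, which is normal with connected Weyl group, so $R$-module $G$-spectra over $\cV$ are equivalent (via $\Phi^G$, using Lemma~\ref{lem:VGisotropy} and the fixed-point equivalence \cite{modfps}) to $\Q$-modules; there $\Q$ generates, so $R$ generates. Part (iii) is essentially identical: $\mcR(\up\Tt)=\ebar_GDS(\infty V(Z))_+$ has $\pi^G_*=\prod_{H\supseteq\Tt,H\ne G}\Q[c]$, and by the coinduction discussion all these subgroups are normal with $N_G^f=G$, so no component twisting occurs; each factor $\Q[c]$ is $H^*_{O(2)}$ of a point-like space and the computation in the proof of the $\mcR(\up\Tt)$-lemma shows $R$ itself maps to the product of free rank-one modules, which generate. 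For part (ii) the two cases differ: for $R=\mcR(\up H(\infty,n))=\siftyV{Z}\sm DE\lr{n}$ there is one ``ambient'' isotropy $H(\infty,n)$ with finite Weyl group plus the cotoral tail, so one needs $R$ (covering the generic/limit behaviour) together with $R\sm G/H(\infty,n)_+$ (covering the isolated finite-Weyl-group stratum), exactly as in Example~(ii) for $O(2)$ and $D_{2n}$; for the global $R=\mcR(\up Z)=\siftyV{Z}\sm D\efp$ one assembles these over all $n$, replacing $G/H(\infty,n)_+$ by $\sigma^G_{H(\infty,n)}=\sigma^{O(2)}_{D_{2n}}$ since only the dihedral idempotent piece is relevant after inflation from $O(2)$.

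Part (iv) is the substantive one and I would treat it last. For a single finite $F=H^\lambda(m,n)$, $\pi^G_*(DE\lr{F})=[BW_G(F),S^0]$ and $W_G(F)$ has identity component a circle, so $R$-module $G$-spectra are, via passage to $N_G(F)$-fixed points and Shipley, modules over $\Q[c]$ with $W_G^d(F)$-action; the single cell $R\sm\sigma_F$ maps to the free rank-one module and hence generates — this is the local statement. The global statement for $R=\mcR(\up1)=\prod_F DE\lr{F}$ then requires two families of generators because the space $\up1$ of full finite subgroups is not discrete in the Thomason topology: the ``panel'' cells $R\sm\sigma_F$ capture each height-$0$ point, but one also needs cells supported near the height-$1$ limit subgroups $H^{s/ns}(m,\infty)$, namely $R\sm\overline{\sigma}^{s/ns}_{m,\infty}$ (the $\ebar$-localized analogues), and the alternative generating set $R\sm\sigma^{s/ns}_{(1,n)}$, $R\sm\overline{\sigma}_{(1,\infty)}$ exploits the cotoral-tail structure to get by with the ``bottom row'' cells only. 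The main obstacle is precisely this global bookkeeping in part~(iv): one must check that the proposed finite list of cell types, under the product decomposition $\pi^G_*\mcR(\up1)=\prod_F\Q[c]$ with the varying $W_G^d(F)$-actions recorded in the coinduced diagram of Section~\ref{sec:decompS}, really does hit a set of projective generators for \emph{every} factor simultaneously — i.e.\ that no finite subgroup is ``missed'' by the neighbourhood systems $\cN_H^\alpha$ chosen earlier — and that the two stated generating sets are genuinely interchangeable, which amounts to expressing each $\sigma_F$ as a finite cell object built from the bottom-row cells using the cotoral inclusions $H^\lambda(m,n)\subseteq H(\infty,n)$. I expect this to reduce, after the equivalences, to a purely algebraic generation statement about $\prod\Q[c]$-modules with finite-group actions, which is routine once set up but requires care to match indices with the group theory of \cite{t2wqalg}.
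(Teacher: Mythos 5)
There is a circularity problem with your overall strategy. The monogenicity statement in Proposition~\ref{prop:Rmodgen} is one of the \emph{inputs} needed to make Equivalences~2--4 of Section~\ref{sec:strategy} work: the fixed-point comparison from \cite{modfps} (and the subsequent passage to algebra via Shipley) is a Quillen equivalence by the Cellularization Principle \emph{provided} the category of $R$-module $G$-spectra is generated by the listed cells. You cannot therefore invoke that equivalence to detect triviality of an $R$-module $M$ on homotopy groups and deduce generation; at best you have reformulated the problem, not solved it. Your phrase ``the adjunction of Equivalence~2--3 reflects generation'' is precisely the point that needs proving, and it is exactly what the proposition is for. Your sketch of part~(iv) also openly defers the hard content (``I expect this to reduce\dots'') and does not explain why the ``bottom row'' cells $R\sm\sigma^{s/ns}_{(1,n)}$, $R\sm\overline\sigma_{(1,\infty)}$ should suffice.

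The paper argues entirely inside $G$-spectra, without any appeal to algebra. It starts from the trivial fact that $R$-modules are generated by $R\sm G/H_+$ over all $(H)$, and then reduces this set using two observations: (a)~$R\sm G/H_+\simeq *$ whenever no subgroup of $H$ lies in the geometric isotropy of $R$, which immediately prunes the generating set in parts (i)--(iii); and (b)~if $L$ is cotoral in $K$, then $R\sm G/K_+$ builds $R\sm G/L_+$. The cotoral reduction (b) is the key ingredient you are missing and it explains the indexing in parts~(ii) and~(iv): for $K/L$ a circle, the cofibre sequence $K/L_+=S(\alpha)_+\to S^0\to S^\alpha$ combined with the Thom isomorphism $R\sm S^\alpha\simeq\Sigma^2R$ shows $R\sm K/L_+$ is built from $R\sm K/K_+$ in two stages, and inducing up to $G$ and inducting on rank gives (b) in general. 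This is why, for instance, in part~(iv) the cells supported on $H^\lambda(m,n)$ for $m>1$ are redundant given those at $m=1$: the cotoral chain $H^\lambda(1,n)\subset\cdots$ does the work. I would recommend replacing the algebra-first detection argument with this direct geometric one.
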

\begin{proof}
The category of $R$-modules is always generated by the objects $R\sm
G/H_+$ as $H$ runs through conjugacy classes of subgroups. It remains
only to observe that many of these are redundant. 

We use two facts. Firstly, if $H$ has no subgroups in the geometric
isotropy of $R$ then $R\sm G/H_+\simeq *$. This leaves only $G/G_+$
for Part (i), only $G/H_+ $ for subgroups containing $Z$ in Part
(ii), and $G/H_+$ for Subgroups containing $\Tt$ in Part (iii).
Secondly, if $L$ is cotoral in $K$ then $G/K_+\sm R $ builds $G/L_+\sm
R$. We prove this by induction on the rank of $K/L$. If $K/L$ is a
circle we have a cofibre sequence $K/L_+=S(\alpha)_+\lra S^0\lra
S^{\alpha}$. All the spectra $R$ have Thom isomorphisms, so $R\sm
S^{\alpha}\simeq \Sigma^2 R$, and hence $R\sm K/K_+$ builds $R\sm
K/L_+$. Inducing to $G$ gives the required result. 
  \end{proof}

  \subsection{Algebraic dual cells}
  \label{subsec:dualcells}
 In the algebraic context we will need to identify
 cellularization. Any set of generators will do, so we use the dual
 cells since it is simpler to write down their homotopy since no
 shifts are involved. We record
 this calculation here.

\begin{defn}
\label{defn:DsigmaH}
The algebraic dual cells are as follows. 

(i) The $G$-fixed dual cell is 
  $$D\sigma_G=DS^0=S^0=\left(
\begin{gathered}
\xymatrix{
  &\cEi \cIi \cOcF\tensor \Q&\\
  \cEi \cOcF\tensor_{\cOcFZ}\cOcFZ  \urto &&\cIi \cOcF\tensor_{\cOcFTt}\cOcFTt\ulto\\\
  &\cOcF \urto \ulto&
  }
\end{gathered}
\right) 
$$

(ii) The dual basic cell corresponding to a 1-dimensional subgroup $H$
containing $Z$  is 
  $$D\sigma_H=\left(
\begin{gathered}
\xymatrix{
  &0&\\
  \cEi \cOcF\tensor_{\cOcFZ}\Q_H  \urto &&0 \ulto\\\
  &\prod_{(F)\subseteq H}  \Q_F [c]\urto \ulto&
  }
\end{gathered}
\right) 
$$

(iii) The dual basic cell corresponding to a 1-dimensional subgroup $H$
containing $\Tt$  is 
  $$D\sigma_H=\left(
\begin{gathered}
\xymatrix{
  &0&\\
0\urto &&  \cIi \cOcF\tensor_{\cOcFTt}\Q_H  \ulto \\
  &\prod_{(F)\subseteq H} \Q_F \urto \ulto&
  }
\end{gathered}
\right) 
$$

(iv) The dual basic cell corresponding to a 0-dimensional subgroup $F$
is 
  $$D\sigma_F=\left(
\begin{gathered}
\xymatrix{
  &0&\\
0\urto &&  0  \ulto \\
  &\Q_F \urto \ulto&
  }
\end{gathered}
\right)$$
\end{defn}

\subsection{Formality}
\label{subsec:formal}
With the appropriate ring spectra and coefficient behaviour
established, this is easily verified.

\begin{lemma}
\label{lem:formalityone}
The diagram of $G$-spectra is intrinsically formal. 
\end{lemma}

\begin{proof}
The argument is the same as for the torus. We work down from the
height 2 groups. At each stage there are products of localizations of
the previous stage. Products are dealt with by \cite[10.1(ii)]{tnqcore} and
localizations by \cite[10.1 (iii)]{tnqcore}. The shape of the diagram is such
that we can argue inductively as in \cite[10.2]{tnqcore}. The actions of the finite groups are
not obstacle by Maschke's Theorem. 
\end{proof}

\section{From $G$-spectra to the abelian models in mixed components}
\label{sec:piA}

The proof strategy is described in Section \ref{sec:strategy}. These
cases are 2-dimensional, not Noetherian but simpler than that for the 2-torus,
since there are only finitely many relevant connected subgroups.
In fact we deal with the following examples 
\begin{itemize}
\item $G=O(2)\times T$, $\Lambda_0=\Z \oplus \Zt$
\item $G=Pin(2)\times T$, $\Lambda_0=\Z \oplus \Zt$
 \item $G=N_{U(2)}(T^2)$, $\Lambda_0=\Z W$
\end{itemize}

Since our principal concern is the third case, where the 
toral lattice is $\Lambda_0=\Z W$, we will illustrate the argument 
with this example, but we only  use methods that apply 
in apply in all three cases.

We will see the category  $\cA (G|\full)$ is of injective dimension 2, and hence 
$DG-\cA(G|\full)$ admits the injective model structure with 
homology isomorphisms as weak equivalences. 

\begin{thm}
  For $G=O(2)\times T, Pin(2)\times T$ or $N_{U(2)}(T^2)$ above there are Quillen equivalences 
    $$\Gspectra|\full \simeq DG-\cA (G|\full)$$
    \end{thm}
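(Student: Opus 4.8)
The plan is to run the chain of Quillen equivalences
displayed in Section~\ref{sec:strategy}, verifying each link using the
ring-spectrum input assembled in Sections \ref{sec:isotropicrings}--\ref{sec:genform}.
First I would invoke Equivalence~0 (the sphere is its own module category) and
Equivalence~1, which is the general cellularization-of-diagrams principle
\cite[4.1]{diagrams}: the content here is exactly the Proposition of
Section~\ref{sec:decompS} exhibiting $S^0$ as the homotopy pullback of the cube
of isotropically simple commutative ring spectra $\mcR(\up 1),\mcR(\up Z),
\mcR(\up\Tt),\mcR(\up G)$, so $\Gspectra|\full$ becomes the cellularization of
the category of generalized diagrams of modules over that cube.

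Next I would pass to the coinduced cube $\mcRt$ (Section~\ref{sec:decompS},
\S\emph{The coinduced diagram}), which by the Lemma there has the same
$\piG_*$ but whose $G$-fixed points are $\mcR(\up K)^{N_G^f(K)}$, so that
Equivalence~2 applies \cite{modfps}: taking fixed points under the connected
part of the Weyl group identifies $\mcRt$-module $G$-spectra over $\cV$ with
$\cW$-spectra (spectra with the residual finite-group actions $W_G^d(K)$) whose
homology is $H^*(BW_G^e(K))$. Equivalence~3 is Shipley's theorem
\cite{ShipleyHZ} applied levelwise to the cube, replacing $\cW$-spectra by DG
modules over $\Q[\cW]$; the finite-group actions are carried along without
obstruction. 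Equivalence~4 is the formality statement: this is Lemma
\ref{lem:formalityone}, proved by the inductive argument down the Thomason
filtration using \cite[10.1--10.2]{tnqcore} for products and localizations, with
the finite-group actions handled by Maschke's theorem. At this point we have the
cellularization of the category of modules over
$\piG_*((\widetilde{S^0})^{\cospan})$, whose value is the diagram of rings
$\cOcFt$ computed in the Corollary of Section~\ref{sec:decompS}; the localization
lemmas (Lemma~\ref{lem:qc}) guarantee that the homotopical localizations match
the algebraic ones $\cEi,\cIi$, so the modules that arise are exactly the
quasicoherent extended diagrams of Section~\ref{sec:abmixed}, with the generators
identified by Proposition~\ref{prop:Rmodgen} corresponding under the equivalence
to the algebraic dual cells of Definition~\ref{defn:DsigmaH}.

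Equivalence~5 is the Cellular Skeleton Theorem: one checks that $\cA(G|\full)$,
sitting inside the module category via the quasicoherent-extended diagrams, is a
cellular skeleton in the sense that its derived category is the cellularization
of the big module category, with homology isomorphisms as the weak equivalences.
This uses that $\cA(G|\full)$ has injective dimension $2$ (so $DG$-$\cA(G|\full)$
carries the injective model structure), together with the identification of the
generators above. The bookkeeping for this step -- matching the cells
$R\sm\sigma_F$, $R\sm\sigma^{s/ns}_{(1,n)}$, $R\sm\overline{\sigma}_{(1,\infty)}$
etc.\ against $D\sigma_F$, $D\sigma_H$, $D\sigma_G$ and verifying that the three
variant group structures ($\Z\oplus\Zt$ split, $\Z\oplus\Zt$ non-split, $\Z W$)
all produce the same abelian model -- is deferred to Section~\ref{sec:CST}, and
I expect \textbf{this Cellular Skeleton step to be the main obstacle}: the
earlier equivalences are essentially formal applications of cited machinery once
the ring spectra are in hand, whereas checking that the explicitly-presented
standard model is precisely the cellular skeleton (rather than some Morita-equivalent
enlargement) requires genuine computation with the Euler-class and
idempotent localizations and a careful comparison of the component structures in
the three cases.
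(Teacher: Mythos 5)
Your proposal reproduces the paper's argument faithfully: it runs the five-step chain of equivalences from Section~\ref{sec:strategy}, using the pullback cube and coinduced diagram from Section~\ref{sec:decompS}, the fixed-point equivalence and Shipley's theorem, the formality Lemma~\ref{lem:formalityone}, and the Cellular Skeleton Theorem (Proposition~\ref{prop:CST}), which is also where the paper concentrates its real technical work. This is essentially the same approach, and your identification of the cellular skeleton step as the main load-bearing part matches the structure of the paper.
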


  \subsection{The homology functors}

We describe the functor $\piA_*: \Gspectra \lra \cA (G|\full)$ to the
standard model.

\begin{defn} 
  In effect the functor $\piA_*$ is defined by applying
  $\piG_*(D\efp \sm X\sm (\cdot))$  to the diagram 
$$\xymatrix{
  &\siftyV{G}&\\
  \siftyV{Z}\urto &&\supV{\Tt}\ulto\\
  &S^0\urto \ulto&
}$$

  More precisely 
\begin{multline*}
    \left(
\begin{gathered}
\xymatrix{  &  N(\up 1)&\\
  N(\up Z)\urto&&  N(\up \Tt)\ulto\\
  &N(\up G)\ulto\urto&
}
\end{gathered}
\right)=\\
\left(
  \begin{gathered}
 \xymatrix{
  &  \piGs (X\sm D\efp)&\\
\piGs (X\sm D\efp\sm \siftyV{Z})   \urto&&
 \piGs (\ebar_G X\sm D\efp)  \ulto\\
  &\piGs (X\sm D\efp\sm \siftyV{G})  \ulto\urto&
}
\end{gathered}
\right)
\end{multline*}

This is quasi-coherent by Lemma \ref{lem:qc}, and we will establish
extendedness in Lemma \ref{lem:e} below with
$$
\xymatrix{
  &  V=\pi_*(\Phi^GX)\ar@{--}[dr] \ar@{--}[dl]&\\
  \phi^Z\piA_*(X)=\pi_*(\Phi^ZX)\ar@{--}[dr]&&\phi^{\Tt}(X)=\piG_*(\ebar_GX)\ar@{--}[dl]\\
&\phi^1N=\piGs (X\sm D\efp\sm \siftyV{G}) &
}
  $$
\end{defn}

\begin{lemma}
\label{lem:e}
The object $\piA_*(X)$ is qce and hence lies in the standard model. 
\end{lemma}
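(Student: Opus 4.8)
The plan is to verify that the diagram $\piA_*(X)$ defined above satisfies both axioms of the standard model, namely quasicoherence and extendedness. Quasicoherence is already in hand: by Lemma \ref{lem:qc}, the localizations $\siftyV{Z}\sm(-)$ and $\ebar_G(-)$ applied to a module over $\mcR(\up 1)=D\efp$ induce on $\piG_*$ precisely the algebraic localizations at $\cE$ and $\cI$. Applying this to the module $N=X\sm D\efp$ gives $N(\up Z)=\cEi N(\up 1)$, $N(\up \Tt)=\cIi N(\up 1)$ and $N(\up G)=\cEi\cIi N(\up 1)$, which is exactly the quasicoherence requirement.

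The substance is extendedness: I must exhibit $\cOcFZ$-, $\cOcFTt$-, and $\Q$-modules $\phi^ZN$, $\phi^{\Tt}N$, $\phi^GN$ so that $N(\up Z)=\cEi\cOcF\tensor_{\cOcFZ}\phi^ZN$, and similarly for the other two vertices. The candidates are dictated by the geometric fixed point functors: $\phi^ZN=\pi_*(\Phi^Z X)$ (a module over $\piG_*\mcR(\up Z)=\cOcFZ$ via the isotropy lemma), $\phi^{\Tt}N=\piG_*(\ebar_G X)$ (a module over $\cOcFTt$), and $\phi^GN=\pi_*(\Phi^G X)=\Q\tensor(\text{something})$, a $\Q$-module. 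First I would recall from Section \ref{sec:isotropicrings} that $\Phi^Z$ commutes with the relevant smashes and localizations, using the identity $[X,\siftyV{Z}\sm Y]^G=[\Phi^Z X,\Phi^Z Y]^{G/Z}$ from the proof of the $\mcR(\up Z)$ lemma. The point is that $D\efp$ becomes, after applying $\Phi^Z$ and restricting to the dihedral part of $G/Z=O(2)$, the ring whose homotopy is $\cOcFZ$; then smashing with $X$ and inverting $\cI$ (i.e.\ applying $\ebar_G$) produces exactly $\cIi\cOcF\tensor_{\cOcFZ}\phi^ZN$ after extension of scalars along $\cOcFZ\to\cOcF$. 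The analogous computation for $\up\Tt$ uses $DS(\infty V(Z))_+$ in place of $D\efp$ and inverts $\cE$ via $\siftyV{Z}\sm(-)$.

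The mechanism making all of this work is base change: smashing a module over the smaller ring spectrum $\mcR(\up Z)$ (resp.\ $\mcR(\up\Tt)$) with the localized $D\efp$-module realises the algebraic tensor product $\cEi\cOcF\tensor_{\cOcFZ}(-)$ on homotopy, because the ring maps in the inflation diagram of Section \ref{sec:abmixed} are precisely the ones induced on $\piG_*$ by the structure maps $\mcR(\up Z)\to\siftyV{Z}\sm\mcR(\up 1)$ etc.\ of the pullback cube in Section \ref{sec:decompS}. Concretely, $N(\up Z)=\piG_*(X\sm D\efp\sm\siftyV{Z})$; writing $D\efp\sm\siftyV{Z}\simeq\mcR(\up Z)\sm_{?}(\cdots)$ is not literally available, so instead I would argue at the level of homotopy groups, using flatness of the localizations $\cEi$ and $\cIi$ over the polynomial ring $\Q[c]$ (localization is exact) together with the fact that $\cOcF=\cOcFZ[c]$-style extension is flat, to commute $\piG_*$ past the smash and identify the answer with the stated tensor product. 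The main obstacle I anticipate is exactly this identification of $\piG_*(X\sm\mcR(\up 1)\sm\siftyV{Z})$ with $\cEi\cOcF\tensor_{\cOcFZ}\pi_*(\Phi^Z X)$: one must check that no higher Tor terms intervene (which follows from flatness of the relevant localizations and from $\Q$ being a field, so there are no issues with the $\Q[\cW]$-structure by Maschke), and that the module structure on $\pi_*(\Phi^Z X)$ over $\cOcFZ$ is the correct one — i.e.\ that it matches the geometric identification $\pi_*\Phi^Z X=\bigoplus_n\pi_*\Phi^{H(\infty,n)}X$ used implicitly in the inflation diagram. Once extendedness is verified at all three upper vertices, qce holds, $\piA_*(X)$ lies in $\cA(G|\full)$, and the lemma is proved.
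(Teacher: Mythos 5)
Your overall plan is right: quasicoherence is dispatched by Lemma \ref{lem:qc}, and the content of the lemma is extendedness at the three upper vertices, with the candidate geometric fixed point modules $\pi_*(\Phi^G X)$, $\pi_*(\Phi^Z X)$ and $\piG_*(\ebar_G X)$ exactly as you name them. You also correctly anticipate that the key step is showing the natural comparison map from the external tensor product to $\piG_*$ of the smash is an isomorphism.

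Where the argument comes up short is in the mechanism for proving that isomorphism. You correctly observe that a clean spectrum-level base-change statement $D\efp\sm\siftyV{Z}\simeq\mcR(\up Z)\sm_{?}(\cdots)$ is not available, and then propose to work "at the level of homotopy groups" via flatness of $\cEi$, $\cIi$ and of $\cOcFZ\to\cOcF$, and absence of higher Tor. But flatness only guarantees that the algebraic functor $\cEi\cOcF\tensor_{\cOcFZ}(-)$ is exact; it does not by itself show that the comparison map you have constructed hits its target. The paper closes this gap in the standard way: it exhibits both sides of each comparison map as homology theories in $X$ and checks the map is an isomorphism on a set of generators appropriate to each vertex — $S^0$ (hence all spheres) at $\up G$, the idempotent pieces $e_US^0$ at $\up Z$, and representation spheres $S^V$ at $\up\Tt$ (where one also uses that $\ebar_G$ is idempotent so $\supV{\Tt}\sm\ebar_G X\simeq\ebar_G X$). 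The flatness observations you make are what guarantee the left-hand side of the comparison is a homology theory as $X$ varies, but you still need the reduction to generators to finish; that is the missing step.
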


\begin{proof}
There are three extendedness isomorphisms. In each case,
we have natural comparison maps from the external to the
internal tensor products, and we argue individually. At $\up G$ the
comparison map reads
$$\cEi \cIi \piGs(D\efp )\tensor \pi_*(\Phi^GX)\lra 
\piGs(\siftyV{G}\sm D\efp \sm \Phi^GX);$$
we comment that $\siftyV{G}\sm \Phi^GX$ can be made sense of either as 
by use of inflation to view the $G/G$-spectrum $\Phi^GX$ as a $G$-spectrum, or else by direct construction of a $G$-spectrum. 
is clearly an isomorphism when $\Phi^GX$ is a sphere, and the general
case follows since both sides are homology theories. 

At $\up Z$, the comparison map is
$$\cEi \piGs(D\efp )\tensor_{\cOcFZ} \pi^{G/Z}_*(\Phi^ZX)\lra 
\piGs(\siftyV{Z}\sm D\efp \sm \Phi^ZX);  $$
we comment that $\siftyV{Z}\sm \Phi^ZX$ can be made sense of either as 
by use of inflation to view the $G/Z$-spectrum $\Phi^ZX$ as a
$G$-spectrum, or else by 
direct construction of a $G$-spectrum. 
The map is obviously an isomorphism for $S^0$, and it follows that it
is an equivalence for $e_US^0$ for each idempotent. These are
generators for the category of spectra over $Z$, so the result
follows. 

Finally at $\up \Tt$, we have the map
$$ \cIi \piGs(D\efp )\tensor_{\cOcFTt} \piGs(\ebar_G X)\lra 
\piGs(\supV{\Tt} \sm D\efp \sm \ebar_G X);$$
we comment that $\ebar_G$ is idempotent, so
$\supV{\Tt}\sm \ebar_G X=\ebar_GS^0\sm \ebar_GX\simeq \ebar_GX. $
This is obviously an isomorphism for $X=S^V$ for any complex
representation of $G$, and these generate the objects over $\Tt$.
\end{proof}

\section{The Cellular Skeleton Theorem in mixed components}
\label{sec:CST}

We have seen at the homotopy level that the category or rational
$G$-spectra with full isotropy is equivalent 
to the cellularization of the category of punctured cubical diagrams 
of modules (where we recall that cellularization may be taken with
respect to the dual basic cells of Definition \ref{defn:DsigmaH}). 
It remains to show that the corresponding fact holds at the level of
abelian categories, so that the standard model 
$\cA(G|\full)$ is a cellular skeleton. Logically this work is
purely about the algebraic categories, but
it is much easier to motivate and explain with the link to spectra
made explicit. 

\begin{prop}
\label{prop:CST}
The cellularization of the category of 
$\piG_*((S^0)^\cospan)[\cW]$-modules has $\cA (G|\full)$ as a cellular skeleton. 
\end{prop}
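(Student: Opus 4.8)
The plan is to establish Proposition \ref{prop:CST} by a dimension-stratified argument, working down from the height-2 point to the height-0 panel, exactly as in the torus case \cite{tnqcore} but using that here there are only finitely many relevant connected subgroups. First I would record what the category $\piG_*((S^0)^\cospan)[\cW]\module$ actually is: objects are punctured-cube diagrams of modules over the rings $\cOcFt$ with the prescribed $W_G^d(K)$-actions at each vertex, with no quasicoherence or extendedness constraints imposed. The claim is that the cellularization of this category with respect to the dual basic cells of Definition \ref{defn:DsigmaH} is equivalent to $DG$-$\cA(G|\full)$, the derived category of the standard model (which, as noted before the theorem, has injective dimension 2 and so carries an injective model structure with homology isomorphisms as weak equivalences). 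The heart of a Cellular Skeleton Theorem is to show (a) every object of $\cA(G|\full)$ is cellular, i.e.\ built from the dual basic cells; (b) every dual basic cell lies in $\cA(G|\full)$, i.e.\ is qce; and (c) the inclusion $\cA(G|\full)\hookrightarrow \piG_*((S^0)^\cospan)[\cW]\module$ together with the cellularization functor induces a Quillen equivalence on the relevant model structures.

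The key steps, in order: (1) Verify directly that each dual basic cell $D\sigma_G, D\sigma_H$ ($Z\subseteq H$ or $\Tt\subseteq H$), $D\sigma_F$ from Definition \ref{defn:DsigmaH} is quasicoherent and extended --- this is a short bookkeeping check using the localization formulas of Lemma \ref{lem:qc} and the extendedness structure maps $\phi^Z,\phi^{\Tt},\phi^G$; the Euler-class and idempotent-indexed sets $\cE,\cI$ behave as required by construction. (2) Show conversely that an arbitrary qce object $N$ of $\cA(G|\full)$ is cellular. Here I would filter $N$ by Thomason height: first split off the contribution of the height-2 point $G$ using the idempotent structure, realizing the $\phi^G$-part by copies of $D\sigma_G$; then peel off the two height-1 cotoral lines $\Th_1^+=\up Z\setminus\up G$ and $\Th_1^-=\up\Tt\setminus\up G$ using $D\sigma_H$ for $Z\subseteq H$ and $\Tt\subseteq H$ respectively (the partition of height-1 subgroups from the Lemma in Section \ref{sec:abmixed} is exactly what makes these two families independent); finally the height-0 panel is a product of $\Q_F$'s realized by the $D\sigma_F$. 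Extendedness is what guarantees that at each stage the "quotient" is again qce and supported on strictly lower height, so the induction terminates after three steps. (3) Identify the cellularization as a Bousfield localization/colocalization and check that the composite $\cA(G|\full)\hookrightarrow \piG_*((S^0)^\cospan)[\cW]\module \to \cell(\dots)$ is a Quillen equivalence: since $\cA(G|\full)$ has finite injective dimension, one compares injective resolutions on the two sides and uses that the dual basic cells are small and generate, exactly as in \cite[\S12]{tnqcore}.

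The main obstacle I expect is step (2), and within it the bookkeeping at the height-1 stage where the two families $\up Z$ and $\up\Tt$ meet the height-0 panel. Because $\Tt$ is not normal and (for $\Lambda_0=\Z W$) the two circles $Z$ and $\Tt$ intersect in $\{\pm I\}$, the component structure $N_G^f(K)\subsetneq N_G(K)$ intervenes and one must be careful that the coinduced/Weyl-group decoration matches the algebraic model; this is exactly the point where the "2 or 3 subgroups $H^\lambda(m,n)$" combinatorics and the Type 1 vs.\ Type 2 distinction enter, and where the argument genuinely differs from the connected torus. The uniformity claim --- that the same proof covers all three groups $O(2)\times T$, $Pin(2)\times T$, $N_{U(2)}(T^2)$ --- should follow because the only place the group enters is the number of subgroups in a given $(m,n)$-slot, which affects the index sets of the products but not the shape of the inductive argument; I would remark on this rather than re-run the argument three times. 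The formality and intrinsic-formality inputs (Lemma \ref{lem:formalityone}) and the generation statements (Proposition \ref{prop:Rmodgen}) are used as black boxes to know that the cellularization on the topological side matches the one we are computing algebraically.
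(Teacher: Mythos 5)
Your three-part framing (dual cells lie in $\cA(G|\full)$; objects of $\cA(G|\full)$ are cellular; identify the cellularization) matches the paper's overall shape, and your step (1) is exactly the paper's ``by inspection'' observation. However, your step (2) differs from the paper's argument in a way that leaves a genuine gap, and you omit a separate lemma the paper needs.

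On step (2): the paper does \emph{not} prove that objects of $\cA(G|\full)$ are cellular by a direct Thomason-height filtration and cell-peeling. It proves it by constructing an explicit injective resolution $0\to X\to I_0X\to I_1X\to I_2X\to 0$ of length two, whose terms are sums of the right-adjoint objects $f_G(J)$, $f_Z(J)$, $f_{\Tt}(J)$, $f_1(J)$; each of these is realizable by a spectrum, hence cellular, and the finiteness of the resolution then gives cellularity of $X$. Your proposed alternative --- ``split off the height-2 contribution, then peel the two height-1 lines, then the height-0 panel, with extendedness guaranteeing each quotient is again qce and of lower height'' --- does not go through as stated. In the abelian category you cannot split off the contribution at $G$: the natural map $X\to f_G(X(\up G))$ has cofibre vanishing at $G$, but the result is a general diagram, not a qce one, and successive cofibres do not stay inside $\cA(G|\full)$. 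Working with injectives is precisely how the paper sidesteps this: injective objects are built from the $f_K$'s, the injective dimension is controlled (it is $\le 2$), and cellularity is read off from the resolution rather than from a filtration. Your appeal to ``extendedness guarantees the quotient is again qce'' is the exact place the argument would break.

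Separately, the paper needs and proves a lemma you do not address: that \emph{every} object of the bigger module category $\piG_*((S^0)^\cospan)[\cW]\module$ is cellularly equivalent to an object of $\cA(G|\full)$. This is where the peeling by $f_G$, then $\Gamma_Z$, then $\Gamma_{\Tt}$, followed by showing the residue $X'''$ is cellularly acyclic (via $[\sigma_F,X''']=0$, $[\sigma_H,X''']=0$, and a homotopy-pullback computation of $[\sigma_G,X]^G_*$) actually occurs. Without this, knowing that cells lie in $\cA$ and that $\cA$ is cellular gives you only one inclusion; you also need that cellularizing an arbitrary module lands you in $\cA$. Finally, the obstacle you flag as central --- the non-normality of $\Tt$ and the $N_G^f(K)$ versus $N_G(K)$ distinction at the height-1 stage --- is genuine but is resolved upstream, in the coinduction discussion of Section 5, so that by the time one reaches the Cellular Skeleton Theorem it is already encoded in the coefficient ring $\cOcF$ and the $\cW$-actions; the substantive labor in the proof of Proposition \ref{prop:CST} is the construction of the resolutions, not the component-group bookkeeping.
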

    
The proof of this will be eased by exploring the category a little. 

\subsection{The adelic cube and the square of simples}

First of all, we need to establish that the standard model corresponds
to the adelic model. The standard model is based on the set of
simple representations of
$W$ occurring in $H_1(\T; \Q)$; there are two of these, so the diagram is a
square. On the other hand, the adelic cube is based on the Thomason
height; there are three heights ($0,1, 2$), and so the adelic cube is
3 dimensional.  We need to explain how the standard model is viewed as
a subcategory of the category of cubical diagrams. This is reasonably 
clear once we display the cubical diagram. It is helpful to first 
display the punctured cube of spectra. 

  \begin{equation*}
  \resizebox{\displaywidth}{!}
  {\xymatrix{
 &\siftyV{Z}\sm DE\cF/Z_+ \sm X \vee \ebar_G \sm X \rrto \ddto 
 &&\ebar_G \siftyV{Z}\sm DE\cF/Z_+ \sm X \vee \ebar_G\siftyV{Z} \sm X  \ddto\\
  &&\siftyV{G} \sm X \ddto \urto&\\
  &\siftyV{Z}\sm DE\cF_+ \sm X \vee \ebar_G \sm D\efp\sm  X \rrto &&
  \ebar_G \siftyV{Z} \sm D\efp \sm X   \\
  D\efp \sm X \rrto \urto&&\ebar_G \siftyV{Z}\sm D\efp \sm  X\urto&
}}
\end{equation*}

Taking homotopy groups we have 
\begin{equation*}
  \resizebox{\displaywidth}{!}
{\xymatrix{
 &\phi^Z N\oplus \phi^{\Tt}N \rrto \ddto &&\cIi \cOcFZ \tensor 
 \phi^ZN\oplus 
 \cEi\cOcFTt\tensor \phi^{\Tt} N\ddto\\
  && N(\up G) \ddto \urto&\\
  &N(\up Z)\oplus N(\up \Tt) \rrto &&N(\up G)\\
N(\up 1) \rrto \urto&& N(\up G)\urto&
}}
\end{equation*}

From this diagram we see how the square of simples gets incorporated
into the adelic cube of heights. 

\subsection{Right adjoints to evaluation}
Since objects of the models are given by diagrams of modules, it is natural to consider the
evaluations at various subgroups. The key to analysing the category is
given by importing objects from these module categories. In effect
these are done through right adjoints to evaluation, but their
properties are a little stronger than this would imply. 

For subgroups $H$ of $G$ and suitable modules $M$ we will construct
objects  $f_H(M)$ of the standard model, which are supported on $H$
and its subgroups. We will first give the construction and then explain their properties. 

\begin{defn}
  \label{defn:rightadjoints}
 At $G$,  for any vector space $W$ we define
$$f_G(W)=
\left(
\begin{gathered}
    \xymatrix{
    & \cEi \cIi \cOcF \tensor W &\\
    \cEi \cIi \cOcF \tensor W \urto&& \cEi \cIi \cOcF \tensor W\ulto\\
    &\cEi \cIi \cOcF \tensor W \urto \ulto&
    }
\end{gathered}
\right) $$

At a 1-dimensional subgroup $H$ containing $Z$, for each vector space 
$W$ with an action of the group of order 2,  we define
$$f_Z(W)=
\left(
\begin{gathered}
    \xymatrix{
    & 0&\\
    \cEi \cOcF \tensor W_H \urto&& 0\ulto\\
    &\cEi \cOcF \tensor W_H \urto \ulto&
    }
\end{gathered}
\right) $$

At a 1-dimensional subgroup $H$ containing $\Tt$, for each
torsion $\Q [c]$-module $M$, we define
$$f_{\Tt}(M_H)=
\left(
\begin{gathered}
    \xymatrix{
    & 0 &\\
    0 \urto&& \cIi \cOcF \tensor_{\cOcFTt} M_H\ulto\\
    &\cIi \cOcF \tensor_{\cOcFTt} M_H    \urto \ulto&
    }
\end{gathered}
\right) $$

At a finite subgroup $F$, for each torsion $\Q[c]$-module $M_H$ with an
action of $W_G^d(F)$ we define
$$f_1(M_H)=
\left(
\begin{gathered}
    \xymatrix{
    & 0&\\
    0\urto&&0\ulto\\
    &M_F \urto \ulto&
    }
\end{gathered}
\right) $$
\end{defn}

From the definitions it is easy to verify the required properties. 
\begin{lemma}
(i)  For any vector space $W$, the functor  $f_G$ has the property 
  $$\Hom (N,f_G(W))=\Hom (N(\up G), W)$$

  (ii)  If $H$ is a 1-dimensional subgroup containing $Z$, 
   then for any vector space $W$ with an action of the component group 
   $\cW_H$ there is a natural isomorphism 
$$\Hom (N,f_G(W))=\Hom (N(\up Z), W)^{\cW_H} .$$

(iii)  If $H$ is a 1-dimensional subgroup containing $\Tt$, 
then   for any torsion $\Q[c]$-module $M$,
there is a natural isomorphism 
  $$\Hom (N,f_H(M))=\Hom (N(\up \Tt ), M)$$

(iv)  If $F$ is a finite subgroup then, for any torsion $\Q[c]$-module 
$M$ with an action of the component group $\cW_H$
  there is a natural isomorphism 
$$\Hom (N,f_H(M))=\Hom (N(\up \Tt ), M)^{\cW_H}$$
\end{lemma}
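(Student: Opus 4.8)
The plan is to verify each of the four adjunction formulas by direct computation, working one diagram shape at a time. The common mechanism is that each $f_H(M)$ is built so that a map $N \to f_H(M)$ of diagrams is forced to factor through a single evaluation. I would set up the general pattern first: an object $N$ of the standard model is a commutative square
$$\xymatrix{
&N(\up 1)&\\
N(\up Z)\urto && N(\up \Tt)\ulto\\
&N(\up G)\urto \ulto&
}$$
and a morphism to another such square is a quadruple of module maps commuting with all four structure maps. For each $f_H(M)$ the entries are concentrated so that most of the compatibility constraints become vacuous and the remaining datum is a single map out of $N(\up H)$ (or out of a $\phi$-part of $N$, recovered via extendedness). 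So the first step is simply to record the shape of each $f_H(M)$ against the shape of $N$ and see which components of a morphism are automatically zero.

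For part (i), since $f_G(W)$ has the constant value $\cEi\cIi\cOcF\tensor W$ at all four corners with identity structure maps, a morphism $N \to f_G(W)$ is determined by its value at $\up G$ after checking that the value at $\up 1$ factors through $N(\up 1) \to \cEi\cIi N(\up 1)=N(\up G)$; here I would invoke quasicoherence (Lemma \ref{lem:qc}) to identify $\cEi\cIi\cOcF\tensor_{\cOcF} N(\up 1)$ with $N(\up G)$ and extendedness to see that a map of $\cEi\cIi\cOcF$-modules out of $N(\up G)$ is the same as a map of $\Q$-modules out of $\phi^G N = N(\up G)$ landing in $W$. That gives $\Hom(N, f_G(W)) = \Hom(N(\up G), W)$. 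For part (iii) the argument is parallel: $f_{\Tt}(M_H)$ is supported only at $\up \Tt$ and $\up 1$ with the structure map an isomorphism $\cIi\cOcF\tensor_{\cOcFTt} M_H \xrightarrow{=} \cIi\cOcF\tensor_{\cOcFTt} M_H$, so a morphism from $N$ is the data of an $\cIi\cOcF$-module map $N(\up \Tt) \to \cIi\cOcF\tensor_{\cOcFTt} M_H$ compatible with the map down from $\up 1$, which is automatic; by extendedness this is a $\cOcFTt$-module map $\phi^{\Tt}N = N(\up \Tt)$-data $\to M_H$, i.e. $\Hom(N(\up \Tt), M)$.

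For parts (ii) and (iv), the only extra ingredient is the component group $\cW_H$. Here I would note that $f_Z(W)$ and $f_1(M_H)$ are built using the \emph{restricted} module $W_H$ (respectively $M_F$), which is obtained from $W$ by pulling back along $\cOcFZ \to \Q_H$ and remembering the $\cW_H$-action only implicitly; a map $N \to f_Z(W)$ of diagrams in the standard model, which carries no $\cW_H$-equivariance of its own, lands in the $\cW_H$-fixed points of $\Hom(N(\up Z), W)$, and conversely every $\cW_H$-fixed homomorphism extends. Concretely I would trace through the component indexing: $N(\up Z) = \prod_{H(\infty,n)}$-graded, and picking out the component over the $1$-dimensional $H$ containing $Z$ picks out exactly the piece on which $\cW_H$ acts, with the Weyl-group averaging (Maschke, as in Lemma \ref{lem:formalityone}) giving the fixed-point description. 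The same bookkeeping with $F$ in place of $H(\infty,n)$ and $M$ torsion over $\Q[c]$ handles (iv); the statement as written even reuses $\Hom(N(\up\Tt),M)^{\cW_H}$, and I would check the indexing makes the finite subgroup $F$ sit under a unique $1$-dimensional $H \supseteq \Tt$ as recorded in Section~\ref{sec:abmixed}.

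The main obstacle I expect is not any single computation but getting the \emph{module-theoretic bookkeeping} exactly right: keeping straight which ring ($\Q$, $\cOcFZ$, $\cOcFTt$, or $\cOcF$ and their localizations) each $\Hom$ is taken over, which base change $\tensor$ appears, and how extendedness lets one replace a $\Hom$ out of a localized-extended module $N(\up K)$ by a $\Hom$ out of the coefficient module $\phi^K N$. In particular in (ii) and (iv) one must be careful that the naive $\Hom$ in the standard model (no equivariance) really does produce the $\cW_H$-\emph{invariants} and not, say, the coinvariants --- rationally these agree by Maschke, but the direction of the natural map should be pinned down. Once the dictionary between the four evaluation functors and their right adjoints is set up cleanly, each of the four isomorphisms is a one-line check, so I would invest the writing effort in stating that dictionary precisely rather than in the verifications themselves.
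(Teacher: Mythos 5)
Your direct verification (evaluation at the relevant corner; quasicoherence collapses the rest; extendedness translates the module map at the corner into a map out of $\phi^K N$; $\cW_H$-fixed points enter via the built-in equivariance of morphisms in the standard model) is exactly what the paper intends — the paper gives no argument beyond the sentence ``From the definitions it is easy to verify the required properties.'' One small caution: you write $\phi^G N = N(\up G)$ (and similarly for $\Tt$), but by extendedness $N(\up G) = \cEi\cIi\cOcF\tensor \phi^G N$ is the \emph{extension} of the $\Q$-module $\phi^G N$, so the adjunction really identifies $\Hom_{\cA}(N,f_G(W))$ with $\Hom_{\Q}(\phi^G N,W)$; the lemma's notation $\Hom(N(\up G),W)$ is using $N(\up G)$ loosely for the underlying $\phi$-datum, and it is worth keeping the distinction explicit so that the tensor–Hom step lands on $W$ rather than on $\cEi\cIi\cOcF\tensor W$.
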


\begin{remark}
It follows that the functors $f_H$ take injective modules to injective
objects of the standard model. We will use this below to show the
injective dimension of the standard model is precisely 2. 
\end{remark}



\subsection{Proof of Cellular Skeleton Theorem}
Having made the necessary preparations, we now prove Proposition
\ref{prop:CST}.

By inspection we see that each of the dual basic  cells  in Definition
\ref{defn:DsigmaH} lies in the standard
model, essentially because the homotopical localizations are obtained by inverting a 
multiplicatively closed set. More precisely, the value at $\up 1$ for
the dual cell $DG/H_+$ is $\piGs (F_H(G_+, D\efp ))\cong \pi^H_*(D\efp)$.

Conversely, it remains to show that every object of the standard model is 
cellular. We prove this by showing each object of $\cA (G|\full)$ has a finite resolution by 
injectives, each of which is cellular.

\begin{lemma}
The modules $f_G(W)$,  $f_Z(W_H)$, $f_{\Tt}(M_H )$, $f_1(M_F)$  described above are qce and hence in the standard model. 
\end{lemma}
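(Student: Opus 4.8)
The plan is to verify, for each of the four functors $f_G$, $f_Z$, $f_\Tt$, $f_1$, the two conditions defining the standard model, reading everything off the explicit diagrams in Definition~\ref{defn:rightadjoints}: \emph{quasicoherence}, that $N(\up Z)=\cEi N(\up 1)$, $N(\up \Tt)=\cIi N(\up 1)$ and $N(\up G)=\cEi\cIi N(\up 1)$; and \emph{extendedness}, that $N(\up Z)=\cEi\cOcF\tensor_{\cOcFZ}\phi^ZN$, $N(\up \Tt)=\cIi\cOcF\tensor_{\cOcFTt}\phi^\Tt N$ and $N(\up G)=\cEi\cIi\cOcF\tensor\phi^GN$ for suitable modules $\phi^KN$ over $\Q$, $\cOcFZ$ and $\cOcFTt$. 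The finite-group actions decorating the $\phi^KN$ are simply carried along and play no role in the check, so it suffices to work with the underlying diagram of $\cOcFt$-modules.

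Extendedness is almost formal. For $f_1(M_F)$ one takes $\phi^1N=M_F$ and $\phi^ZN=\phi^\Tt N=\phi^GN=0$; for $f_\Tt(M_H)$ one takes $\phi^\Tt N=M_H$ and the rest zero; for $f_Z(W_H)$ one takes $\phi^ZN=W_H$ and the rest zero --- in each case the single nonzero upper vertex is literally written in the required extended form and the other upper vertices are $0$. The only case needing argument is $f_G(W)$, where every vertex equals $\cEi\cIi\cOcF\tensor W$; here one takes $\phi^GN=W$, $\phi^ZN=\cIi\cOcFZ\tensor W$ and $\phi^\Tt N=\cEi\cOcFTt\tensor W$, and extendedness comes down to the base-change identities $\cEi\cOcF\tensor_{\cOcFZ}\cIi\cOcFZ=\cEi\cIi\cOcF$ and $\cIi\cOcF\tensor_{\cOcFTt}\cEi\cOcFTt=\cIi\cEi\cOcF$, valid because localization commutes with base change along the inflation maps.

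Quasicoherence for $f_G(W)$ is immediate, since each vertex is already $\cE^{-1}\cI^{-1}$-local and re-localizing changes nothing. For $f_1$, $f_Z$ and $f_\Tt$ the single nonzero upper vertex is handled by localization commuting with the tensor products, e.g.\ $\cIi\cOcF\tensor_{\cOcFTt}M_H=\cIi\big(\cOcF\tensor_{\cOcFTt}M_H\big)=\cIi N(\up 1)$ for $f_\Tt(M_H)$; the substance is that the remaining upper vertices, which are $0$, really are the asserted localizations of $N(\up 1)$. This rests on two vanishing facts: a torsion $\Q[c]$-module is annihilated by $\cEi$, since every element is killed by a power of the Euler class $c$; and a module supported on finite subgroups sharing one index (one ``column'') is annihilated by $\cIi$, since a cofinite set of the complementary index has characteristic function acting as $0$. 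Thus for $f_1(M_F)$, $\cEi M_F=0$ gives $N(\up Z)=0=\cEi N(\up 1)$ and $N(\up G)=0$ while $\cIi M_F=0$ gives $N(\up \Tt)=0=\cIi N(\up 1)$; for $f_Z(W_H)$, $N(\up 1)=\cEi\cOcF\tensor_{\cOcFZ}W_H$ is already $\cE$-inverted and is supported on the finite subgroups $F$ with $FZ=H$, all with the same index, so $\cIi N(\up 1)=0=N(\up \Tt)$; symmetrically for $f_\Tt(M_H)$.

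The fiddly point --- and the only place where something must be checked rather than waved through --- is the interaction of the two multiplicative sets $\cE$ and $\cI$ with the inflation maps $\cOcFZ\to\cOcF$ and $\cOcFTt\to\cOcF$ recalled from \cite{t2wqalg}: that a characteristic function of a cofinite set of $n$-indices pulls back along $\cOcFZ\to\cOcF$ to an idempotent that still annihilates the finite-subgroup modules with the wrong $n$, and that inverting a column-constant Euler class inflated from $\cOcFTt$ has, after $\cI$ is inverted, the same effect as inverting an arbitrary finitely supported Euler class of $\cOcF$ (each makes $c$ invertible on every single component). These are elementary consequences of the combinatorics of the inflation maps, which send $H^\lambda(m,n)$ to $H(\infty,n)$ respectively to $H^{\lambda'}(m,\infty)$, but they are where the bookkeeping has to be done.
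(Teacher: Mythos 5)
Your argument is correct and is, in essence, a carefully elaborated version of the paper's own proof, which simply says to examine the four objects in Definition~\ref{defn:rightadjoints} and observe directly that they are qce, with the locally constant $\cW$-action permitted. You have filled in the details: the two defining conditions (quasicoherence and extendedness) for each of $f_G$, $f_Z$, $f_\Tt$, $f_1$ reduce, after the formal base-change identifications, to the two vanishing facts that $\cEi$ kills torsion $\Q[c]$-modules and that $\cIi$ kills modules concentrated in a single column of $\cF$; these are exactly the points one has to notice. Your last paragraph correctly identifies the one genuine piece of bookkeeping --- that inverting $\cE$ and $\cI$ commutes suitably with the two inflation maps $\cOcFZ\to\cOcF$ and $\cOcFTt\to\cOcF$ and that the combinatorics of the maps (columns indexed by $n$ for $\cI$, rows indexed by $m$ for $\cE$) makes the required vanishings hold. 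The paper takes this as immediate from inspection; your write-up is more explicit but establishes nothing beyond the paper's one-line observation, so the two arguments are the same in substance.
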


\begin{proof}
  We examine the four objects in Definition \ref{defn:rightadjoints},
  and observe they are all qce. Each of them has a permitted locally
  constant   action  by $\cW$, and this is preserved by direct sums. 
\end{proof}

Finally, we show that the standard model is big enough.
The argument is a 2-dimensional version of a standard 
argument. As a warmup we recall the proof in the simplest case, 
that of  semifree $\T$-spectra. 

\begin{example}
We suppose given the diagram $\Q \lra \Q[c,c^{-1}]\lla \Q [c]$ of rings 
we see that any object $X=(V\lra P \lla N)$ is cellularly equivalent to an 
object of the standard model as follows. 

First, there is a map $X\lra 
e(V)$ to a cellular object, so it suffices to show the fibre $X'$ is 
cellularly equivalent to an object in the standard model. In other 
words we are reduced to the case with $V=0$. 

Now consider the fibre sequence of modules, $\Gamma N\lra N\lra 
N[1/c]$. The object $f_1(\Gamma N)$ is cellular, so it suffices to see 
$f_1(N[1/c])$ is cellularly trivial. 

The two dual cells are (i) $D\sigma_1=(0\lra 0\lla \Q)$ and (ii) 
$D\sigma_G=(\Q \lra \Q[c,c^{-1}]\lla \Q[c])$. The torsion cell 
$D\sigma_1$  clearly has no maps to the injective object 
$f_1(N[1/c])$. The maps from $D\sigma_G$ is the homotopy pullback 
$$\xymatrix{
[D\sigma_G,  X]\rto \dto &V\dto \\
N\rto &P 
}$$ 
In our case $V=0$ and $N\simeq P$, so the homotopy pullback is 
trivial. 
\end{example}

\begin{lemma} 
Any module is cellularly equivalent to an object
of the standard model. 
\end{lemma}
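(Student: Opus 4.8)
The plan is to imitate the warm-up for semifree $\T$-spectra, but to peel off the isotropy one Thomason height at a time, using the right adjoints $f_H$ to produce cellular injective approximations at each stage. Given an object $N$ of the category of $\piG_*((S^0)^\cospan)[\cW]$-modules, I would first kill the top of the diagram: there is a natural map $N\lra f_G(N(\up G))$, and $f_G(N(\up G))$ is cellular since it is (a sum of copies of) the dual cell $D\sigma_G$ after inverting the relevant multiplicative sets, and injective. So it suffices to treat the fibre $N'$, for which $N'(\up G)=0$; equivalently, for which the localised value at the height-2 point vanishes.

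Next I would handle the two height-1 strata. With $N'(\up G)=0$, the data at $\up Z$ and at $\up \Tt$ are $\cW$-modules over $\cOcFZ$ and $\cOcFTt$ respectively. On the $\up \Tt$ side the relevant module $P=N'(\up \Tt)$ is a module over $\cOcFTt=\prod\Q[c]$ with a $\cW$-action; form the torsion/localization cofibre sequence $\Gamma_c P\lra P\lra P[\cEi]$ (componentwise $c$-torsion), and note $f_{\Tt}(\Gamma_c P)$ is cellular (built from the dual cells $D\sigma_H$ for $H\supseteq\Tt$ and below) and injective, so it suffices to kill the quotient. On the $\up Z$ side the module $W=N'(\up Z)$ over $\cOcFZ=\prod\Q$ is already "flat/injective" in the relevant sense — the only localization is inverting the idempotents in $\cI$ — so $f_Z(W)$ is directly cellular and injective. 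After mapping $N'$ into the product $f_Z(N'(\up Z))\times f_{\Tt}(\Gamma_c N'(\up \Tt))$ and passing to the fibre again, one is reduced to an object concentrated at the bottom, i.e. essentially a $\cW$-module over $\cOcF=\prod\Q[c]$ supported purely at finite subgroups; its $c$-torsion part is handled by $f_1$ and is cellular, and the "$c$-invertible" leftover has no maps from either the torsion dual cells $D\sigma_F$ (for degree reasons / support reasons) or from the higher-dimensional dual cells (by the homotopy-pullback computation of $[D\sigma_H, -]$, exactly as in the semifree example, since the relevant upper entries now vanish), hence is cellularly trivial.

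The verification that the residual pieces are cellularly trivial is the crux, and it is where I would be most careful: at each stage one has an injective object of the standard model (coming from an $f_H$ applied to a localized, i.e. $I$- or $\cE$-divisible, module), and one must check it receives no essential maps from any dual basic cell. For the lower-dimensional cells this is immediate from supports — a localized-away module has no torsion submodules, so $\Hom$ from $D\sigma_F$ or a height-1 torsion cell vanishes. For the top-dimensional cell $D\sigma_G$ (and the height-1 cells when we are at the bottom) one writes $[D\sigma_H, -]$ as the homotopy pullback of the diagram of evaluations, exactly as displayed in the semifree example, and observes that in the relevant cofibre the upper-corner terms are zero while the two lower maps are equivalences (a localization map both of whose sides agree after inverting), so the pullback is contractible. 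One must also track the $\cW$-actions throughout, but these cause no trouble: all functors $f_H$ respect the locally-constant $\cW$-action, $\Hom$'s split off $\cW$-invariants by Maschke, and direct sums preserve everything. The main obstacle, then, is bookkeeping: organising the four-term (height $2,1,1,0$) dévissage so that at each stage the fibre genuinely simplifies and the injective we quotiented by is visibly a (possibly infinite) sum of dual cells after the appropriate localization — i.e. making precise that "$\cEi\cOcF\tensor_{\cOcFTt}M_H$ for $M_H$ divisible" is built from $D\sigma_H$'s — rather than any deep homotopical input.
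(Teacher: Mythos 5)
Your strategy is in essence the one the paper uses: a dévissage across the Thomason strata, peeling off pieces built from the right adjoints $f_G$, $f_Z$, $f_{\Tt}$, $f_1$, and then a final homotopy-pullback computation showing the residual object receives no maps from any dual basic cell. The ingredients and the crux (the pullback computation of $[D\sigma_G,-]$ with vanishing upper corners and equivalences below, plus the torsion-freeness argument for the lower cells) match the paper's proof.

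There are, however, two bookkeeping slips that would need repair in a careful write-up. First, $f_{\Tt}$ is a \emph{right} adjoint to evaluation, so a map $N' \lra f_{\Tt}(M)$ corresponds to a map $N'(\up\Tt) \lra M$; since $\Gamma_c P$ is a \emph{sub}module of $P = N'(\up\Tt)$ there is no natural map in that direction. The paper sidesteps this by instead running localization cofibre sequences $\Gamma_Z X' \lra X' \lra \cEi X'$ and $\Gamma_{\Tt}X'' \lra X'' \lra \cIi X''$, peeling the torsion off from below and passing to the localization, rather than mapping into a right adjoint applied to the torsion. Second, after the height-$1$ peelings the residual object is \emph{not} concentrated at $\up 1$: in the paper it is an $X'''$ that is zero at $G$ with $\cE$ and $\cI$ invertible wherever they act, but still potentially nonzero at $\up Z$ and $\up\Tt$. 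This is why the final step in the paper explicitly checks $[\sigma_H, X''']$ vanishes for all four types of dual cell (finite $F$, $H\supseteq Z$, $H\supseteq\Tt$, and $G$), using torsion-freeness for the first three and the pullback cube for the last. Your sketch does gesture at all of these, but the ``concentrated at the bottom'' framing would, if taken literally, make the height-$1$ checks appear unnecessary when in fact they are needed.
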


\begin{proof}
We start with an arbitrary object $X$. This is really a diagram on the
punctured cube of non-empty subsets of $\{0,1,2\}$, but for legibility we
record only $0,1, 2, 01, 12, 012$ in the pattern
$$\xymatrix{
2\drto &&\\
&21\drto&\\
2\urto \drto&&210\\
&10\urto&\\
0\urto &&
}$$

For rings we have
$$\xymatrix{
\Q\drto &&\\
&\cIi \cOcFZ \times \cEi\cOcFTt\drto&\\
\cOcFZ \times \cOcFTt 
\urto \drto&&\cEi \cIi\cOcF\\
&\cEi\cOcFTt\urto&\\
\cOcF \urto &&
}$$
For the object $X$ we have 
$$\xymatrix{
W\drto &&\\
&A\drto&\\
N 
\urto \drto&&Z \\
&B\urto&\\
Q\urto &&
}$$

Proceeding with the proof, we note that $f_G$ is right
adjoint to evaluation on
the full module category so there is a map $X\lra f_G(W)$ with cofibre
$X'$ zero at $G$. Since $f_G(W)$ is cellular it suffices to show $X'$
is cellularly equivalent to an object of the standard model. In other
words, we may assume $W=0$.  

Now we construct a fibre sequence 
$$\Gamma_ZX'\lra X'\lra \cEi X'$$ 
and note $\Gamma_ZX'=f_Z(T_Z)$ where $T_Z$ is $\cE$-torsion and hence 
$f_Z(T_Z)$ is cellular, and we take $X''=\cEi X'$. 
Now we construct a fibre sequence 
$$\Gamma_{\Tt}X''\lra X''\lra \cIi X''$$ 
and note $\Gamma_{\Tt}X''=f_{\Tt}(T_{\Tt})$ where $T_{\Tt}$ is $\cI$-torsion and hence 
$f_{\Tt}(T_{\Tt})$ is cellular, and we take $X'''=\cEi X''$.

We thus have an object $X'''$ which is zero at $G$ but where $\cE$ and $\cI$
are invertible everywhere it makes sense.

It remains to prove that $X'''$ is cellularly equivalent to 0. It is
clear that  $[\sigma_F, X''']^G=0$ for $F$ finite since $\phi^1X'''$
is torsion free and injective. Similarly $[\sigma_H, X''']^G=0$
for subgroups with identity component $Z$ because all terms are
$\cI$-torsion free and injective, and  $[\sigma_H, X''']^G=0$
for subgroups with identity component $\Tt$ because all terms are
$\cE$-torsion free and injective.

Finally for $G$ homotopy of $X$ we have a homotopy pullback cube where the
maps come from the original diagram for $X$ and the cube is 
$$\xymatrix{
&N\rrto \ddto &&A\ddto \\
[\sigma_G,X]^G_*\urto \rrto\ddto &&V\urto \ddto\\ 
&B\rrto  &&Z \\
Q\urto \rrto&&Z\urto^=  
}$$
For $X'''$ we see the horizontal maps on the back face are
equivalences, as is the map $Q\lra B$. Now consider the square of
fibres from left to right. Since $V=0$ it shows that $[\sigma_G,
X]^G_*$ is a pullback of three zero groups and hence zero. 
\end{proof}

\subsection{The injective dimension of the standard model}
It is quite instructive to construct injective resolutions in general,
which has the further merits of preparing
 the way for computation and establishing the existence
of model structures. 

\begin{lemma}
The standard model $\cA (G|\full)$ is of injective dimension $\leq 2$. 
\end{lemma}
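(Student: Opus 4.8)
The plan is to bound the injective dimension by exhibiting, for an arbitrary object $N$ of $\cA(G|\full)$, an explicit injective resolution of length $2$ built from the functors $f_H$ of Definition \ref{defn:rightadjoints}. The key point, already recorded in the remark after the adjunction lemma, is that each $f_H$ sends an injective module (over the relevant module category — $\Q$-modules at $G$, modules with a $\cW_H$-action over $\cOcFZ$ at subgroups containing $Z$, torsion $\Q[c]$-modules with $\cW_H$-action at subgroups containing $\Tt$ or finite subgroups) to an injective object of the standard model. Since each of those module categories is itself of finite injective dimension — $\Q$-modules and $\Q[\cW_H]$-modules have injective dimension $0$ by Maschke, and torsion $\Q[c]$-modules with finite group action have injective dimension $1$ — the resolution length will be controlled by how many stages of the cube we must traverse plus the worst module-level injective dimension.

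First I would start from $N$ and map it to $f_G(\phi^G N)$ using the counit of the $f_G$-adjunction; since $f_G$ is right adjoint to evaluation at $\up G$ and $\phi^G N$ is a $\Q$-module, $f_G(\phi^G N)$ is injective, and the map $N\to f_G(\phi^G N)$ is surjective at $\up G$ by extendedness (the value at $\up G$ is $\cEi\cIi\cOcF\tensor\phi^G N$, which is exactly what $f_G$ reproduces). Let $N_1$ be the kernel; it has $\phi^G N_1=0$. Next I would resolve $N_1$ at the two height-$1$ strata. Mapping $N_1\to f_Z(\phi^Z N_1)\oplus f_{\Tt}(\phi^{\Tt}N_1)$: by extendedness the values at $\up Z$ and $\up\Tt$ are recovered from $\phi^Z$ and $\phi^{\Tt}$, so this map is surjective at those spots, and since $\phi^G N_1=0$ there is nothing to check at $\up G$. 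The target is injective provided we first replace $\phi^Z N_1$ by an injective $\Q[\cW_Z]$-module (automatic, injective dimension $0$) and $\phi^{\Tt}N_1$ by an injective torsion $\Q[c]$-module with $\cW$-action (costs one extra step, injective dimension $1$ here). Let $N_2$ be the kernel; now $N_2$ is concentrated at $\up 1$, i.e.\ $N_2=f_1(\phi^1 N_2)$ for a torsion $\Q[c]$-module $\phi^1 N_2$ with $\cW$-action. Resolving that module by injectives — injective dimension $1$ — and applying $f_1$ completes the resolution.

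Assembling: one step to kill $\phi^G$, then the height-$1$ contribution contributes its module-level injective dimension, and the height-$0$ contribution the same; but because the height-$1$ torsion part and the height-$0$ part can be fit into a single two-term resolution (the standard homological-algebra trick of splicing, using that at each stage the quotient is already $f_H$ of something), the total comes out to $2$ rather than $3$. Concretely I would argue that $N_1$ (which has $\phi^G=0$) already admits a $2$-step injective resolution by the functors $f_Z, f_{\Tt}, f_1$, mirroring the way semifree $\T$-spectra have injective dimension $1$ in the warmup example, and that prepending the single injective $f_G(\phi^G N)$ does not increase the length because $N\to f_G(\phi^G N)$ is the start of a resolution whose remaining terms are the resolution of $N_1$ shifted.

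The main obstacle is the bookkeeping at the point where the height-$1$ and height-$0$ torsion resolutions interact: one must check that the connecting maps can be chosen so that the total complex is genuinely of length $2$ and not $3$, i.e.\ that no genuine $\Ext^3$ survives. This is exactly the place where the $+$/$-$ splitting of $\Th_1$ (Lemma after the Thomason filtration) is used: because $\up Z$ and $\up\Tt$ are disjoint as strata, the height-$1$ part splits as a product $f_Z\oplus f_{\Tt}$ with no cross terms, so the two one-dimensional pictures contribute independently and the worst-case total is $\max$ rather than sum over the two branches, plus one for traversing from height $2$ to height $1$ — giving $2$. I would make this precise by running the argument of the preceding \enquote{big enough} lemma but keeping track of which $f_H$ appears at each stage and noting each is injective after the (at most one-step) replacement of its defining module by an injective. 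Combined with the evident fact that the standard model has nonzero $\Ext^2$ (e.g.\ between $D\sigma_G$ and a suitable $f_1(M_F)$), this yields injective dimension exactly $2$, though for the stated lemma only the upper bound is required.
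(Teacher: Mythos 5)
Your proposal correctly identifies the building blocks (the functors $f_H$ carry injectives to injectives, and each stalk category has small injective dimension), and the intuition that one gets $2$ rather than $3$ by exploiting the disjointness of $\up Z$ and $\up\Tt$ is sound. But the mechanism you describe does not actually produce an injective resolution, for two related reasons.

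First, you begin by mapping $N$ to $f_G(\phi^G N)$ and setting $N_1$ to be the \emph{kernel}, intending to iterate. An injective resolution $0\to N\to I_0\to I_1\to\cdots$ requires a \emph{monomorphism} $N\hookrightarrow I_0$ followed by \emph{cokernels}. The map $N\to f_G(\phi^G N)$ is an isomorphism at $\up G$, but it is generally neither a monomorphism nor an epimorphism at the other strata: at $\up Z$, say, it is the localization map $N(\up Z)\to\cIi N(\up Z)=N(\up G)$, which is neither injective nor surjective in general. So $N_1\ne 0$ and the image is a proper subobject of $f_G(\phi^G N)$; and a subobject of an injective has no injective-dimension bound in general. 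The dimension-shifting inequality you implicitly invoke, $\mathrm{injdim}(N)\le\max(\mathrm{injdim}(N_1),\mathrm{injdim}(N/N_1))$, is of no use because $N/N_1$ is not $f_G(\phi^G N)$ but an uncontrolled subobject of it. Second, the step where you claim ``the total comes out to $2$ rather than $3$'' by ``the standard homological-algebra trick of splicing'' is precisely the point where the proof has to do genuine work, and it is left unjustified.

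The paper's proof instead builds a single monomorphism $X\hookrightarrow I_0X$ where $I_0X$ is the \emph{direct sum} of $f_G$, $f_Z$, $f_{\Tt}$, $f_1$ applied to carefully chosen modules (the value at $\up G$, the kernel into $\up G$ at $\up Z$, and injective envelopes of the torsion kernels at $\up\Tt$ and $\up 1$). The key calculation is then that the cokernel $C_1$ vanishes at $\up G$ and $\up Z$, and is \emph{divisible} at $\up\Tt$ and $\up 1$ (since it is a quotient involving injective torsion $\Q[c]$-modules). That divisibility is what guarantees the next cokernel is concentrated at $\up 1$ and already divisible, hence injective, terminating the resolution at length $2$. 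Your outline never surfaces this divisibility propagation, which is the actual engine of the bound; you would need to establish it to make the $3\to 2$ reduction rigorous.
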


We will give an explicit resolution whose terms are realizable by
spectra, so we will obtain the required corollary.

\begin{cor}
All objects of $\cA $ are cellular. 
\end{cor}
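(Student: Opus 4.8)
The plan is to produce, for an arbitrary object $N$ of $\cA(G|\full)$, an explicit injective resolution
$$0\lra N\lra I^0\lra I^1\lra I^2\lra 0$$
whose terms $I^j$ are finite products of the objects $f_G(-)$, $f_Z(-)$, $f_{\Tt}(-)$ and $f_1(-)$ of Definition~\ref{defn:rightadjoints}. Since evaluation at a subgroup is exact and each $f_H$ is its right adjoint, $f_H$ carries injective modules --- in the relevant module categories: $\Q$-vector spaces at $G$, $\Q[\cW_H]$-modules at the one--dimensional $H$ containing $Z$, and torsion $\Q[c]$-modules (with $\cW_H$-action) at the one--dimensional $H$ containing $\Tt$ and at the finite subgroups $F$ --- to injective objects of the standard model. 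We arranged above that all of these are qce and cellular: they are assembled from the dual basic cells $D\sigma_H$ of Definition~\ref{defn:DsigmaH} by filtered colimits and extensions, with Maschke's theorem handling the finite group actions. Hence once the resolution is in place, each $I^j$ is cellular and the stated Corollary follows.

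First I would strip off the height-$2$ and height-$1$ data, which turns out to be injective on the nose. The space $\phi^GN=N(\up G)$ is a $\Q$-vector space, hence injective over $\Q$; over $\up Z$ the ring $\cOcFZ=\prod_n\Q$ is a product of fields and the groups $\cW_H$ have order at most $2$, so $\phi^ZN$ embeds as a direct summand of a product $\prod_H W_H$ of injective $\Q[\cW_H]$-modules. Thus a single product of $f_G(-)$ and $f_Z(-)$ terms, placed in degree $0$, matches $N$ over $\up G$ and over $\up Z$; replacing $N$ by the kernel and cokernel of the resulting map, we may assume $\phi^GN=0$ and $\phi^ZN=0$. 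The remaining data lives over $\up\Tt$ and $\up 1$, where the rings are polynomial, $\cOcFTt=\prod\Q[c]$ and $\cOcF=\prod_{(F)}\Q[c]$, and this is where the two homological degrees come from.

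For $\phi^{\Tt}N$, separate the $c$-torsion submodule from the $c$-local quotient by the fibre sequence $\Gamma\phi^{\Tt}N\lra\phi^{\Tt}N\lra\cEi\phi^{\Tt}N$; the localized quotient feeds only into the already-handled $\up G$ corner (via $\siftyV{G}\sm\ebar_G$), while the torsion part, since $\Q[c]$ is a PID and injective hulls of torsion $\Q[c]$-modules are torsion, has a two-term injective resolution $0\to\Gamma\phi^{\Tt}N\to E_{\Tt}\to E'_{\Tt}\to 0$ by torsion injective $\cOcFTt$-modules, giving $f_{\Tt}(E_{\Tt})$ in degree $0$ and $f_{\Tt}(E'_{\Tt})$ in degree $1$. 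Running the same manoeuvre for $\phi^1N$, using $\Gamma\phi^1N\lra\phi^1N\lra\cIi\cEi\phi^1N$ and a PID injective resolution $0\to\Gamma\phi^1N\to E_1\to E'_1\to 0$ with $\cW$-action, produces $f_1(E_1)$ in degree $0$ and $f_1(E'_1)$ in degree $1$; the outstanding degree --- the second half of the dimension, coming from the cospan-gluing of $\phi^1N$ to $\phi^ZN,\phi^{\Tt}N$ through the quasicoherence and extendedness isomorphisms (the $\lim^1$ of the shape diagram) --- contributes the degree-$2$ term $I^2$. After all these subtractions the residual object is $c$-, $\cE$- and $\cI$-invertible wherever it makes sense and carries no data at $G$, $Z$ or $\Tt$; this is exactly the configuration that the proof of the preceding lemma, and of Proposition~\ref{prop:CST}, shows to be homologically trivial, so the complex closes at degree $2$. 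Assembling the degree-$0,1,2$ contributions gives the desired $I^0,I^1,I^2$, whence $\injdim\cA(G|\full)\leq 2$ with cellular terms.

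I expect the real work to be bookkeeping rather than a single hard idea. After peeling off an $f_H$-summand one must verify the residual object is still qce --- still in the standard model --- and that its outstanding data has strictly smaller depth in the Thomason filtration, so the process terminates after two steps; and the differentials $d^0,d^1$ must be checked compatible with the quasicoherence/extendedness isomorphisms relating the corners $N(\up 1),N(\up Z),N(\up\Tt),N(\up G)$. This is the step that forces the degree $2$, and it is governed by the same inductive analysis of the shape of the cube used in Lemma~\ref{lem:formalityone}, with Maschke's theorem again removing any difficulty from the finite group actions. A convenient sanity check on the bound is the Grothendieck spectral sequence for sections over the cospan --- which has cohomological dimension $\leq 1$ --- composed with the fibrewise injective dimension $\leq 1$ coming from $\Q[c]$; that the bound $2$ is attained rather than merely valid can be seen on a single example such as $\piA_*(S^0)$.
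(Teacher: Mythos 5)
Your proposal takes the same route as the paper: build an explicit length-2 injective resolution whose terms lie in the images of the functors $f_G,f_Z,f_{\Tt},f_1$, note that such terms are cellular, and conclude by closure of cellular objects under extensions and fibres. That is precisely what the paper does, so at the level of strategy there is nothing to add.

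Where your write-up diverges in detail is the bookkeeping of how the resolution closes. The paper packages all four corners into a single $I_0X=f_G(J_{G,0})\oplus f_Z(J_{Z,0})\oplus f_{\Tt}(J_{\Tt,0})\oplus f_1(J_{1,0})$, choosing $J_{G,0}=X(\up G)$, $J_{Z,0}=\ker(X(\up Z)\to X(\up G))$ (automatically injective as the ring there is a product of fields), and injective envelopes of the kernels at $\Tt$ and $1$. This makes the first cokernel $C_1$ vanish at $G$ and $Z$ and be divisible (hence already injective) at $\Tt$ and $1$; the second cokernel $C_2$ is then concentrated at $\up 1$ and is a quotient of the divisible module $C_1(\Tt)\oplus C_1(1)$, so it is itself divisible and hence injective, and the resolution stops there. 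Your explanation of the degree-2 contribution via ``the $\lim^1$ of the shape diagram'' and ``cospan-gluing'' is a reasonable heuristic but is not what actually forces the second step: it is simply that correcting the torsion at $\up 1$ produces a second divisible cokernel there, because the extendedness isomorphisms feed the divisible pieces from $\up\Tt$ and $\up 1$ back into $\up 1$. Likewise, your early reduction ``we may assume $\phi^GN=0$ and $\phi^ZN=0$'' should be phrased with some care, since the qce constraints tie the corners together and one cannot literally work corner-by-corner; the paper avoids this by building all four summands of $I_0$ simultaneously from a single embedding. None of this is a genuine gap --- the argument you sketch would work once tightened --- but the paper's version is both cleaner and more self-evidently correct.
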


\begin{proof}
  For an arbitrary object $X$ we construct an exact sequence
$$0\lra X \lra I_0X\lra I_1X\lra I_2X\lra 0$$
with $I_sX$ injective. The aim is to ensure that (a) $I_1X$ is zero at
$G$ and at $Z$, (b) $I_1X$ is injective at
$\Tt$, (c) $I_2X$ is zero at $G,Z$ and $\Tt$ and (d) $I_2$ is injective at $1$.

To start with we will take
$$I_0X=f_G(J_{G,0})\oplus f_Z(J_{Z,0})\oplus
f_{\Tt}(J_{\Tt,0})\oplus f_1(J_{1,0}) , $$
where the modules $J_{K,0}$ are to be described, and
are injective over their respective rings. 

First we take $J_{G,0}=X(\up G)$,
which is automatically injective, and the map $X\lra f_G(J_{G,0})$ corresponds to
the identity. Next, at $Z$ we take $J_{Z,0}=\ker (X(\up Z)\lra X(\up
G))$ (again automatically 
injective) with the component mapping to $f_Z(J_{Z,0})$ corresponding 
to the identity.   At $\Tt$, we take $J_{\Tt,0}$ to be the injective 
envelope of the torsion module $\ker (X(\up Z)\lra X(\up G))$
with the component mapping to $f_Z(J_{\Tt,0})$ corresponding 
to the map into the injective hull. Finally, at $1$ we take $J_{1,0}$ to be the injective 
envelope of the torsion module $\ker (X(\up 1)\lra X(\up Z)\times
X(\up \Tt))$ with the component mapping to $f_1(J_{1,0})$ corresponding 
to the map into the injective hull. 

The cokernel $C_1$ of $X\lra I_0X$ is zero at $G$ and $Z$, and is divisible
at $\Tt$ and $1$. We may therefore 
 take
$$I_1X=f_{\Tt}(J_{\Tt,1})\oplus f_1(J_{1,0}) , $$
where the $J_{K,1}$ are injective over their respective rings. 
As noted the cokernel $C_1$ is injective at $\Tt$ so we may take
$J_{\Tt,1}=C_1(\Tt)$. At $1$, we take $J_{1,1}$ to be the injective
module $C_1(1)$. 

Finally, the cokernel $C_2$  of the map $I_0X\lra I_1X$ is concentrated at 1,
where it is a quotient of the divisible module $C_1(\Tt)\oplus C_1(1)$
which is therefore divisible and hence injective. We therefore have $C_2=f_1(C_2(1))$.
\end{proof}

\begin{remark}
It is easy to give an example where this bound is achieved. Let $\cK$
be the set of finite subgroups $H^s(1, n)$, and $\cK^{\hash}$ be the
h-closure (so that $\cK^{\hash}=\cK\cup \{ H^s(1, \infty)\}$
topologised as the one-point compactification). 
We take $X$ to have the value $\Q$ on the set $\cK$  and $0$ elsewhere. 
We see $I_0X$ is constant at $\Q[c]^{\vee}$ on $\cK$, 
$\cIi \Gamma_c \prod_n\Q[c]^{\vee}$ on $H^s(1, \infty)$ and $0$ elsewhere. This means
$I_1X$ is constant at $\Sigma^2 \Q[c]^{\vee}$ on $\cK$, 
$\cIi \Gamma_c \prod_n\Q[c]^{\vee}\tensor (\Q \oplus \Sigma^2\Q)$ at
$H^s(1, \infty)$. This means the cokernel of $I_0X\lra I_2X$ is non-zero at $H^s(1, \infty)$.
\end{remark}

\bibliographystyle{plain}
\bibliography{../../jpcgbib}
\end{document}